\newcommand{\R}{{\mathbb{R}}}
\newcommand{\Ll}{{\mathbb{L}}}
\newcommand{\M}{{\mathbb{M}}}
\newcommand{\C}{\mathbb{C}}
\newcommand{\D}{\mathbb{D}}
\newcommand{\xx}{\mathbf{x}}
\renewcommand{\vec}[1]{ \boldsymbol{#1}}
\newcommand{\Ingard}{Ingard}
\newcommand{\Loewner}{L{\"o}wner}
\renewcommand{\Re}{{\rm Re\,}}
\renewcommand{\Im}{{\rm Im\,}}
\newcommand{\bbm}[1]{\left[\begin{matrix} #1 \end{matrix}\right]}
\newcommand{\sbm}[1]{\left[\begin{smallmatrix} #1
             \end{smallmatrix}\right]}
\newcommand{\spm}[1]{\left(\begin{smallmatrix} #1
             \end{smallmatrix}\right)}
\newcommand{\CTF}{\mathbf{G}}
\newcommand{\CTFK}{\mathbf{K}}
\newcommand{\DTF}{\mathbf{D}}
\newcommand{\abs}[1]{\left \vert #1 \right \vert}
\newcommand{\Range}[1]{{\rm range}\left (#1 \right )}
\newcommand{\Null}[1]{{\rm ker} \left (#1 \right )}
\newcommand{\BLO}{\mathcal L}
\newcommand{\norm}[1]{\left \|{#1} \right \|}
\DeclareMathOperator{\Chain} {CHAIN}
\DeclareMathOperator{\TI} {TI}
\DeclareMathOperator{\FI} {FI}
\DeclareMathOperator{\BI} {BI}
\DeclareMathOperator{\OF} {OF}
\DeclareMathOperator{\IF} {IF}
\DeclareMathOperator{\SR} {SR}
\newtheorem{thm}{Theorem}
\newtheorem{cor}[thm]{Corollary}
\newtheorem{prop}[thm]{Proposition}
\newtheorem{lemma}[thm]{Lemma}
\newtheorem{defn}{Definition}
\newtheorem{sta}{Standing Assumption}
\newtheorem{exm}{Example}
\newtheorem{remark}{Remark}
\DeclareOldFontCommand{\bf}{\normalfont\bfseries}{\mathbf}
\DeclareOldFontCommand{\rm}{\normalfont\rmfamily}{\mathrm}
\newcolumntype{C}{>{$}c<{$}} 
\title{Numerical modelling  of \\ coupled linear dynamical systems}
\author{\IEEEauthorblockN{Juha Kuortti\IEEEauthorrefmark{1} and Jarmo Malinen\IEEEauthorrefmark{1} and Tom Gustafsson\IEEEauthorrefmark{1}}\\
\IEEEauthorblockA{\IEEEauthorrefmark{1}Department of Mathematics and Systems Analysis, Aalto University}\\
\thanks {Manuscript received XX.XX.XX 
  Corresponding author: J.~Malinen (email:jarmo.malinen@aalto.fi)}}
\def\name#1{\gdef\@name{#1\\}}
\name{{\em Juha Kuortti, Jarmo Malinen, Tom Gustafsson}}
\begin{document}

\maketitle

\bibliographystyle{IEEEtran}

\begin{abstract}
Numerical modelling of several coupled passive linear dynamical
systems (LDS) is considered. Since such component systems may arise
from partial differential equations, transfer function descriptions,
lumped systems, measurement data, etc., the first step is to
discretise them into finite-dimensional LDSs using, e.g., the finite
element method, autoregressive techniques, and interpolation.  The
finite-dimensional component systems may satisfy various types of
energy (in)equalities due to passivity that require translation into a
common form such as the scattering passive representation. Only then
can the component systems be coupled in a desired feedback
configuration by computing pairwise Redheffer star products of LDSs.

Unfortunately, a straightforward approach may fail due to
ill-posedness of feedback loops between component systems.
Adversities are particularly likely if some component systems have no
energy dissipation at all, and this may happen even if the fully
coupled system could be described by a finite-dimensional LDS. An
approach is proposed for obtaining the coupled system that is based on
passivity preserving regularisation.  Two practical examples are given
to illuminate the challenges and the proposed methods to overcome
them: the Butterworth low-pass filter and the termination of an
acoustic waveguide to an irrational impedance.

\end{abstract}

\section{Introduction}

In practical modelling work, various kinds of linear dynamical systems
need be interconnected. The ultimate purpose is to produce computer
software that is able to approximate the composite system behaviour in
frequency and time domains to a sufficient degree. Not only is the
discretisation of the component systems a challenge in itself (not to
be addressed in this work), but also the original, undiscretised
component systems may be represented in various mutually incompatible
ways. The purpose of this article is to show how tools from
mathematical systems theory can be used to overcome these challenges
in mathematical modelling.

Let us continue the discussion in terms of an example from acoustics
that will be treated in Section~\ref{AcTransLineSec} below. The
Webster's lossless horn model
\begin{equation} \label{eq:websterIntro}
    \frac{\partial^2 \phi}{\partial t^2}
    =  \frac{c^2}{\mathcal A(\chi)}\frac{\partial }{\partial \chi}\left(\mathcal A(\chi) \frac{\partial \phi}{\partial \chi} \right)
\quad \text{for all } \quad \chi \in (0,L)  \quad \text{and } \quad t \geq 0
\end{equation}
describes the longitudinal acoustics of a tubular acoustic waveguide
of length $L$ and the intersectional area $\mathcal A = \mathcal
A(\chi)$ where $c > 0$ denotes the speed of sound.  The solution $\phi
= \phi(t, \chi)$ is the \emph{velocity potential}, and the
(perturbation) volume velocity $i = i(t,\chi)$ and the sound pressure
$p = p(t,\chi)$ are given by $i = - \mathcal A(\chi)\tfrac{\partial
  \phi}{\partial \chi}$ and $p = \rho \tfrac{\partial \phi}{\partial
  \chi}$, respectively, where $\rho > 0$ is the density of the
medium. The external control for Eq.~\eqref{eq:websterIntro} takes
place through the boundary conditions
\begin{equation} \label{eq:webster_boundaryIntro}
    - \mathcal A(0) \frac{\partial \phi}{\partial \chi}(0,t) = i_1(t) \quad \text{and } \quad 
     \mathcal A(L) \frac{\partial \phi}{\partial \chi}(L,t) = i_2(t)  \quad \text{for } \quad t \geq 0
\end{equation}
where the acoustic volume velocities $i_1$ and $i_2$ represent input
signals of corresponding to currents at the ends of the
waveguide. There is an output signal at both ends of the waveguide,
given by
\begin{equation} \label{eq:webster_boundaryObsIntro}
\begin{aligned}
  p_1(t) = \rho  \frac{\partial \phi}{\partial t}(0,t)  \quad \text{and } \quad 
  p_2(t) = \rho  \frac{\partial \phi}{\partial t}(L,t)  \quad \text{for } \quad t \geq 0
\end{aligned}
\end{equation}
that represent sound pressures that are the acoustic counterparts of
voltages.

Now, consider the end $\chi = L$ of the waveguide in
Eqs.~\eqref{eq:websterIntro}--\eqref{eq:webster_boundaryObsIntro} to
be coupled to infinitely large exterior space where both the
parameters $c$ and $\rho$ remain the same. A classical and much used
model for the exterior space acoustics is provided by Morse and
\Ingard~\cite{Morse:1968} where they consider a piston (with diameter
$a > 0$) in a cylinder that opens to the 3D half space bounded by a
hard, perfectly reflecting wall. Instead of giving a time-domain model
such as Eq.~\eqref{eq:websterIntro},~\cite{Morse:1968} derives a
\emph{mechanical impedance}.  In terms of the \emph{acoustical
  impedance}, the piston model is given by the irrational analytic
function
\begin{equation}
  \label{eq:acImpIntro}
  Z (s) = Z_0 \left( 1 - \frac{c}{a s} \left (i J_1\left(-\frac{2
    a i}{c} s \right) + H_1\left(-\frac{2 a i}{c} s \right) \right )
  \right) \quad \text{for } \quad \Re{s} > 0
\end{equation}
where $Z_0 = \rho c/\mathcal A_0$, $\mathcal A_0 = \pi a^2$, is the
\emph{characteristic impedance} of an acoustic wave\-guide having a
constant cross-section area $\mathcal{A}_0$, and $J_1(\cdot)$ and
$H_1(\cdot)$ are the Bessel and Struve functions, respectively; see
\cite[Eqs.~(9.1.20)~and~(12.1.6)]{A-S:HMF}.

Rigorously treating the direct coupling of the two
infinite-dimensional \emph{conservative} dynamical systems described
by
Eqs.~\eqref{eq:websterIntro}--\eqref{eq:webster_boundaryObsIntro}~and~Eq.~\eqref{eq:acImpIntro}
appears to be quite difficult. If also the exterior space system of
Eq.~\eqref{eq:acImpIntro} was given as a PDE in terms of a boundary
control system in time domain, then both the systems in
Eqs.~\eqref{eq:websterIntro}--\eqref{eq:webster_boundaryObsIntro}~and~Eq.~\eqref{eq:acImpIntro}
could thus be described using impedance conservative \emph{strong
  boundary nodes} due to their finite-dimensional signals; see
\cite[Definition~4.4~and~Theorem~4.7]{M-S:IPCBCS}. Then their
composition could be understood as a \emph{transmission graph} whose
impedance conservativity and internal well-posedness follows from
\cite[Theorem~3.3]{A-M:CPBCS}. Another popular framework for treating
such couplings is provided by the \emph{port-Hamiltonian systems} in,
e.g., \cite{S-J:PHST} and the references therein. The objective of
this article is, however, more practical: to propose methods for
coupling finite-dimensional, (spatially) discretised versions of
systems that are suitable for numerical simulations.  We seek to
approximate
Eqs.~\eqref{eq:websterIntro}--\eqref{eq:webster_boundaryObsIntro} by
the finite-dimensional linear dynamical system $\Sigma_{ac}$ given by
\begin{equation*}
  \begin{cases}
    x'(t) & = A_{ac} x(t) +  B_{ac} \sbm{i_1(t) \\ i_2(t)}, \\
    \sbm{p_1(t) \\ p_2(t)} & = C_{ac} x(t) \quad \text{for } t \geq 0,
  \end{cases}
\end{equation*}
and Eq.~\eqref{eq:acImpIntro} by the transfer function
\begin{equation*}
  \hat p_2(s) = \left (D_{ex}  + C_{ex} (s - A_{ex})^{-1} B_{ex} \right ) \hat i_2(s)
  \quad \text{for} \quad \Re{s} >  0
\end{equation*}
of another finite-dimensional system $\Sigma_{ex}$. Possible choices
for the approximations, \emph{Finite Element Method (FEM)} and
\emph{L{\"o}wner interpolation}, are discussed in
Sections~\ref{FEMSubSec} and~\ref{LownerSubSec}, respectively. The
required feedback connection between $\Sigma_{ac}$ and $\Sigma_{ex}$
can --- at least in principle --- be computed as the \emph{Redheffer
  star product} (see,~e.g.,~\cite[Section~4]{HK:CSAH},
\cite[Section~10]{Z-D-G:ROC}, \cite[Chapter~XIV]{F-F:CLA}) of certain
\emph{externally Cayley transformed} versions of systems $\Sigma_{ac}$
and $\Sigma_{ex}$ as explained in Section~\ref{RedhefferSec} below.

Unfortunately, complications due to the lack of well-posedness of the
explicitly treated feedback loop make it sometimes impossible to
directly compute the closed loop system.  Within passive systems,
these complications are typically showstoppers for those systems that
are, in fact, conservative. Indeed, conservative systems lack all
energy dissipative mechanisms that could help feedback loops to
satisfy a version of the Nyquist stability criterion.  That the system
described by
Eqs.~\eqref{eq:websterIntro}--\eqref{eq:webster_boundaryObsIntro} is,
indeed, (impedance) conservative follows from
\cite[Corollary~5.2]{A-L-M:AWGIDDS} recalling
\cite[Definition~3.2]{M-S:IPCBCS}.  Our approach is to artificially
regularise such systems to be \emph{properly passive} (see
Definition~\ref{ProperlyImpPassDef}) by adding artificial resistive
losses scaled by a regularisation parameter $\varepsilon$, carrying
out the feedback connection using the Redheffer star product for
$\varepsilon > 0$, and finally extirpating the singular terms at
$\epsilon = 0$ while letting $\varepsilon \to 0$ in order to remove
the regularisation. A tractable example of this process is given in
Section~\ref{ButterworthSec} whereas the resistive regularisation is
used for spectral tuning in Section~\ref{AcTransLineSec}.

Both the models in
Eqs.~\eqref{eq:websterIntro}~and~\eqref{eq:acImpIntro} were originally
derived by theoretical considerations which is not always feasible or
even necessary. A sufficient approximation of time- or
frequency-domain behaviour can often be obtained by measurements,
leading to \emph{empirical models} whose quality is typically not
assessed by, say, mathematical error estimates rather than by
\emph{validation experiments}. In time domain, autoregressive
techniques such as \emph{Linear Prediction (LP, or LPC)} can be used
to estimate the parameters of a (discrete time) rational filter from
measured signals, however, often under some \emph{a priori} model
assumptions. For example, the filter transfer function is always
all-pole in \cite{JM:LPATR,Makhoul:SLP:1975}, and this is relevant for
transimpedances of transmission lines (such as the one defined by
Eqs.~\eqref{eq:websterIntro}--\eqref{eq:webster_boundaryObsIntro}) and
their counterparts consisting of discrete components (such as the
passive circuit for the fifth order Butterworth filter with impedance
given by Eq.~\eqref{eq:fullPiImpedance}). The subsequent realisation
of the rational filter transfer function as a discrete time linear
system can be carried out by using, e.g., the controllable canonical
realisation (see~\cite[Theorem~10.2]{PAF:LSOHS},
\cite[Section~4.4.2]{Antoulas:ALD:2005} \cite[Section~3]{Z-D-G:ROC}).
The transformation to a continuous time system, if necessary, is best
carried out using the inverse \emph{internal Cayley transformation}
(see,~e.g.,~\cite{H-M:CTATDS}, \cite[Section~12.2]{OS:WPLS}), and the
systems can finally be coupled using properly passive regularisation
and the Redheffer star product.

Considering the empirical modelling in frequency domain, direct
impedance measurements from physical circuits could be used as
interpolation data for L{\"o}wners method; see,
e.g.,~\cite[Section~4.5]{Antoulas:ALD:2005}.  In high frequency work
on electronic devices, one would prefer using scattering parameter
data to start with, produced by a \emph{Vector Network Analyser
  (VNA)}, for interpolation in a similar manner. In all cases, the
outcome would be a quadruple of four matrices $A, B, C$ and $D$ giving
rise to dynamical systems in Eq.~\eqref{eq:systemDyn} and transfer
functions in Eq.~\eqref{eq:ContinuousTimeTF} below. Numerical
performance may require additional \emph{dimension reduction} by,
e.g., interpolation or balanced realisations;
see,~e.g.,~\cite[Section~7]{Antoulas:ALD:2005},~\cite[Section~10]{BDOA:OC:2007},~\cite{KG:AOH}.

The purpose of this article is to present an economical toolbox of
mathematical systems theory techniques that is --- at least within
reasonable approximations, regularisations, and validations --- rich
enough for creating numerical time-domain solvers of physically
realistic passive linear feedback systems that are composed of more
simple passive components. The proposed toolbox is introduced in a
fairly self-contained manner, and it consists of basic realisation
algebra and system diagrams (Section~\ref{SystemSection}), internal
and external transformations of realisations
(Section~\ref{TransformSec}), and passivity preserving regularisation
methods for treating possible singularities in system matrices
(Section~\ref{RegSec}).  Our focus is to show how these tools can be
fruitfully used in simulations of two physically motivated
applications in Section~\eqref{ApplicationsSec}. The inconvenient
singularities may appear as three kinds of showstoppers:
\begin{enumerate}
  \item \label{ShowStopper1} \textbf{Realisability:} There are
    \emph{impedance} conservative systems in finite dimensions that
    cannot be described in terms of realisation theory of
    Section~\ref{BackgroundSec}.
  \item \label{ShowStopper2} \textbf{Well-posedness:} There are
    feedback configurations of \emph{scattering} conservative systems
    that are not well-posed, and, hence, cannot be treated within
    finite-dimensional systems as such.
  \item \label{ShowStopper3} \textbf{Technical issues:} There may be a
    technical, restrictive but removable assumption that is required
    only by the mathematical apparatus.
\end{enumerate}
A trivial example of a non-realisable impedance conservative,
non-well-posed physical system is the impedance $Z_L(s) := s L$ of a
single inductor or the admittance $A_C(s) := sC$ of a single capacitor
since $\lim_{\abs{s} \to \infty}{Z_L(s)} = \lim_{\abs{s} \to
  \infty}{A_C(s)} = \infty$ whereas $\lim_{\abs{s} \to
  \infty}{\CTF_{\Sigma}(s)} = D$ in Eq.~\eqref{eq:ContinuousTimeTF}
for any finite-dimensional system $\Sigma$. The most trivial example
of a non-well-posed feedback loop is provided by $\CTF(s) =
(s-1)/(s+1)$ and $\CTFK(s) \equiv 1$ that are both transfer functions
of a finite-dimensional scattering conservative systems whereas the
closed loop transfer function $(s - 1)/2$ is not a transfer function
of any finite-dimensional system. An example of a technical issue can
be found in Section~\ref{RedhefferSec} if one attempts to compute the
Redheffer star product using the chain transformation and
Theorem~\ref{RedhefferCascadeThm}: There is an extra invertibility
condition required by the intermediate chain transformation which is
not required by the Redheffer star product as discussed right after
Eq.~\eqref{eq:RedhefferComponents}. The challenge in using
finite-dimensional realisation theory for practical modelling is to
avoid these three kinds of showstoppers by an expedient use of what
mathematical systems theory offers and regularise component systems
when all other attempts fail.

As a side product, some extensions and clarifications of the
underlying mathematical systems theory framework are indicated:
Theorem~\ref{thm:SpringConnection} and
Corollary~\ref{thm:SpringConnectionCor} for second order passive
systems, the extended definition of the external Cayley transformation
allowing arbitrary characteristic impedances of coupling channels in
Section~\ref{ExtCayleySubSec},
Propositions~\ref{ExtCayleyPassivityProp}~and~\ref{ReciprocalPassivityProp}
for dealing with the passivity properties of this extension as well as
both the reciprocal transformations, and
Theorem~\ref{RedhefferScatteringThm} for well-posedness of a feedback
connection of two impedance passive systems. An almost elementary
proof of Theorem~\ref{RedhefferCascadeThm} on the computation of
Redheffer star products using chain scattering is provided in terms of
system diagram rules and their state space counterparts in
Sections~\ref{DiagrammeSec}~and~\ref{FundamentalSec}. The new
Definition~\ref{ProperlyImpPassDef} of proper passivity is due to the
requirements of the regularisation process introduced in
Section~\ref{RegSec}.

\section{\label{BackgroundSec} Background on systems and passivity}

In this article, we consider continuous time finite-dimensional linear
(dynamical) systems described by the state space equations
\begin{equation}
  \label{eq:systemDyn}
\begin{cases}
    x'(t) & = A x(t) +  B u(t), \\
    y(t) & = C x(t) +   D u(t),
\end{cases}
\quad \text{or, briefly,} \quad \bbm{x'(t) \\ y(t)} = \bbm{A & B \\ C
  & D} \bbm{x(t) \\ u(t)} \quad \text{for} \quad t \in I.
\end{equation}
The quadruple $\Sigma = \sbm{ A & B \\ C & D }$ defining
Eq.~\eqref{eq:systemDyn} is identified with the linear system.  The
temporal domain $I \subset \R$ is an interval, and it is not necessary
to specify it for the purposes of this article except in
Corollary~\ref{thm:SpringConnectionCor}.
\begin{sta}\label{FirstSA}
  We assume that all linear dynamical systems $\Sigma$ are real and
  finite-dimensional in the sense that the dimensions of the
  submatrices in $\Sigma$ make Eq.~\eqref{eq:systemDyn} well-defined.
  We assume that all signals in systems are real and sufficiently
  smooth for the classical solvability Eq.~\eqref{eq:systemDyn}.
  Furthermore, all vector norms and inner products are assumed to be
  euclidean, and all matrix norms are induced by the euclidean vector
  norm.
\end{sta}
We call $A$ the semigroup generator, $D$ the feedthrough matrix, and
$B$, $C$, input and output matrices, respectively.  The input signal
$u(\cdot)$, the state trajectory $x(\cdot)$, and the output signal
$y(\cdot)$ are all column vectors. We assume that $D$ is always a
square matrix, and thus the input and output signals are of the same
dimension.

System $\Sigma = \sbm{ A&B \\ C & D }$ is called \emph{impedance
  passive} if the functions in Eq.~\eqref{eq:systemDyn} satisfy the
energy inequality
\begin{equation*}
  \frac{d}{dt} \norm{ x(t) }^2 \leq 2 \left <u(t), y(t) \right > \quad \text{for all } t \in I.
\end{equation*}
If this inequality is satisfied as an equality, the system is then
called \emph{impedance conservative}. Both of these properties can be
checked in terms of a Linear Matrix Inequality (LMI):
\begin{prop}\label{ImpPassiveProp}
  Let $\Sigma = \sbm{ A&B \\ C & D }$ be linear system. Then $\Sigma$
  is impedance passive if and only if
  \begin{equation*} \label{eq:ImpPassiveProp}
    \bbm{A^T +A & B-C^T \\ B^T -C & -D^T-D } \leq  \bbm{0 & 0 \\ 0 & 0}.
  \end{equation*}
It is impedance conservative if and only if the inequality holds as an
equality.
\end{prop}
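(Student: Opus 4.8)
The plan is to prove both directions of the equivalence by translating the differential energy inequality into a pointwise quadratic-form inequality that must hold for all admissible signals, and then observing that this forces (and is forced by) the stated LMI. Write $z(t) = \sbm{x(t) \\ u(t)}$ and let $M = \sbm{A^T + A & B - C^T \\ B^T - C & -D^T - D}$. The key computation is that, along any classical solution of Eq.~\eqref{eq:systemDyn},
\begin{equation*}
  \frac{d}{dt}\norm{x(t)}^2 - 2\langle u(t), y(t)\rangle
  = \langle x'(t), x(t)\rangle + \langle x(t), x'(t)\rangle - 2\langle u(t), Cx(t) + Du(t)\rangle
  = \langle z(t), M z(t)\rangle,
\end{equation*}
where the last equality follows by substituting $x'(t) = Ax(t) + Bu(t)$ and expanding, using that all signals are real so $\langle v, w\rangle = v^T w$ and the cross terms combine into the off-diagonal blocks $B - C^T$ and $B^T - C$ (note $\langle u, Cx\rangle + \langle x, Bu\rangle$ rearranges to $\langle z, \sbm{0 & B - C^T \\ B^T - C & 0} z\rangle$ after symmetrising), while $-2\langle u, Du\rangle = -\langle u, (D + D^T)u\rangle$ gives the lower-right block. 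So the energy inequality in the definition of impedance passivity is exactly the requirement that $\langle z(t), M z(t)\rangle \le 0$ along all solutions.

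For the ``if'' direction this is immediate: if $M \le 0$ then $\langle z(t), M z(t)\rangle \le 0$ for every $t$ regardless of the trajectory, so the energy inequality holds. For the ``only if'' direction, the point is that every pair $(x_0, u_0) \in \R^n \times \R^m$ arises as $(x(t_0), u(t_0))$ for some classical solution: given $t_0 \in I$ in the interior, choose any sufficiently smooth input $u(\cdot)$ with $u(t_0) = u_0$, solve the linear ODE with initial condition $x(t_0) = x_0$, and the resulting $(x, y)$ is an admissible triple by Standing Assumption~\ref{FirstSA}. Evaluating the energy inequality at $t = t_0$ gives $\langle \sbm{x_0 \\ u_0}, M \sbm{x_0 \\ u_0}\rangle \le 0$; since $(x_0, u_0)$ was arbitrary and $M$ is symmetric, $M \le 0$. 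The conservative case is identical with every inequality replaced by equality, yielding $\langle z, M z\rangle = 0$ for all $z$, hence $M = 0$.

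The only delicate point — and it is minor given the standing assumptions — is the surjectivity claim used in the ``only if'' direction: one must exhibit, for each prescribed instantaneous $(x_0, u_0)$, a genuine classical solution realising it, and handle the case $t_0$ at an endpoint of $I$ by a one-sided argument (the derivative $\tfrac{d}{dt}\norm{x}^2$ is then a one-sided derivative, but the pointwise identity above still holds). This is routine because the system is finite-dimensional and linear, so solutions through any state exist for any smooth input; I would state it in one sentence and move on.
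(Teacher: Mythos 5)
Your proposal is correct. Note that the paper itself offers no proof of Proposition~\ref{ImpPassiveProp}; it only remarks that the statement is the finite-dimensional version of Theorem~4.2(vi) in the cited work of Staffans. Your argument is the standard self-contained one: the identity $\tfrac{d}{dt}\norm{x(t)}^2 - 2\ipd{u(t)}{y(t)} = \bigipd{z(t)}{Mz(t)}$ with $z = \sbm{x \\ u}$ reduces the energy inequality to pointwise negativity of the symmetric matrix $M$, and the ``only if'' direction is closed by observing that every pair $(x_0,u_0)$ is realised instantaneously by some classical trajectory, which is immediate for a finite-dimensional linear ODE with smooth input. One trivial slip: in your parenthetical aside the cross terms should read $-2\ipd{u}{Cx} + 2\ipd{x}{Bu}$ rather than $\ipd{u}{Cx} + \ipd{x}{Bu}$; the displayed identity itself is correct, so this does not affect the argument.
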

\noindent This is the finite-dimensional version of
\cite[Theorem~4.2(vi)]{Staffans:PCCT:2002}.

A passive system is sometimes represented as a second order
multivariate system as in Section~\ref{AcTransLineSec} where Finite
Element discretisation is used.
\begin{thm} \label{thm:SpringConnection}
Let $M$, $P$, and $K$ be symmetric positive definite $m \times m$
matrices of which $M$ and $K$ are invertible. Let $F$ be a $m \times
k$ matrix and $Q_1$, $Q_2$ $m \times m$ matrices. Then the following
holds:
  \begin{enumerate}
  \item \label{thm:SpringConnectionClaim1} The second order coupled system of
    ODEs
    \begin{equation} \label{eq:SpringConnection}
      \begin{cases}
        & M z''(t) + P z'(t) + K z(t) = F u(t),  \\
        & y(t) = Q_1 z(t) + Q_2 z'(t), \quad t \in I
      \end{cases}
    \end{equation}
    defines a linear system $\Sigma_i$ (with the state space of
    dimension $n = 2m$) by
    \begin{equation} \label{eq:SpringConnection2}
      \begin{cases}
        x'(t) & = \bbm{0 & K^{1/2}M^{-1/2 }\\ -M^{-1/2 }K^{1/2} & - M^{-1/2 }PM^{-1/2}} x(t) + \frac{1}{\sqrt{2}} \bbm{0\\
          M^{-1/2} F }u(t)\\
        y(t)  & = \sqrt{2}   \bbm{Q_1 K^{-1/2} & Q_2 M^{-1/2 } } x(t), \quad t \in I
      \end{cases}
    \end{equation}
    where $x(t):= \frac{1}{\sqrt{2}}\bbm{K^{1/2}  z(t) \\ M^{1/2} z'(t)}  $.
  \item \label{thm:SpringConnectionClaim2}  $\Sigma_i$ is
    impedance passive if and only if $Q_1 = 0$ and $Q_2 =  \tfrac{1}{2} F^T$.
  \item \label{thm:SpringConnectionClaim3}  $\Sigma_i$ is
    impedance conservative if and only if $P = Q_1 = 0$ and $Q_2 =
    \tfrac{1}{2} F^T$.
  \end{enumerate}
\end{thm}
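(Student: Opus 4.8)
The plan is to prove part~(i) by a direct change of variables, and then parts~(ii) and~(iii) by inserting the realisation~\eqref{eq:SpringConnection2} into the impedance-passivity LMI of Proposition~\ref{ImpPassiveProp} and reading off the stated conditions from the block structure of that LMI.

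For part~(i), write $x = \sbm{x_1 \\ x_2}$ with $x_1 := \tfrac{1}{\sqrt 2}K^{1/2}z$ and $x_2 := \tfrac{1}{\sqrt 2}M^{1/2}z'$. Since $M$ and $K$ are symmetric positive definite, the square roots $M^{1/2}$ and $K^{1/2}$ exist, are symmetric, and are invertible, so the correspondence $z \leftrightarrow x$ together with the compatibility relation $x_1' = K^{1/2}M^{-1/2}x_2$ is a bijection between classical solutions of~\eqref{eq:SpringConnection} and trajectories of~\eqref{eq:SpringConnection2}. Differentiating $x_1$ gives $x_1' = \tfrac{1}{\sqrt 2}K^{1/2}z' = K^{1/2}M^{-1/2}x_2$; differentiating $x_2$ and substituting $z'' = M^{-1}(Fu - Pz' - Kz)$ gives $x_2' = \tfrac{1}{\sqrt 2}M^{-1/2}(Fu - Pz' - Kz)$, which after rewriting $\tfrac{1}{\sqrt 2}M^{-1/2}Kz = M^{-1/2}K^{1/2}x_1$ and $\tfrac{1}{\sqrt 2}M^{-1/2}Pz' = M^{-1/2}PM^{-1/2}x_2$ becomes exactly the second block row of~\eqref{eq:SpringConnection2}; finally $z = \sqrt 2\,K^{-1/2}x_1$ and $z' = \sqrt 2\,M^{-1/2}x_2$ turn $y = Q_1 z + Q_2 z'$ into $y = \sqrt 2\,\bbm{Q_1 K^{-1/2} & Q_2 M^{-1/2}}x$, with $D = 0$. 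This is elementary linear algebra.

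For parts~(ii) and~(iii), in which necessarily $k = m$ so that $D$ is square and the notion of impedance passivity applies, apply Proposition~\ref{ImpPassiveProp} to the realisation~\eqref{eq:SpringConnection2} with generator $A$, input matrix $B$, output matrix $C$, and $D = 0$. Using the symmetry of $K^{1/2}$, $M^{1/2}$, and $P$, a short computation gives $A + A^T = \sbm{0 & 0 \\ 0 & -2M^{-1/2}PM^{-1/2}}$ and $B - C^T = \sbm{-\sqrt 2\,K^{-1/2}Q_1^T \\ \tfrac{1}{\sqrt 2}M^{-1/2}(F - 2Q_2^T)}$, so that the matrix appearing in Proposition~\ref{ImpPassiveProp} is
\begin{equation*}
  \mathcal{L} \;=\; \bbm{0 & 0 & -\sqrt 2\,K^{-1/2}Q_1^T \\ 0 & -2M^{-1/2}PM^{-1/2} & \tfrac{1}{\sqrt 2}M^{-1/2}(F - 2Q_2^T) \\ -\sqrt 2\,Q_1 K^{-1/2} & \tfrac{1}{\sqrt 2}(F^T - 2Q_2)M^{-1/2} & 0} ,
\end{equation*}
a real symmetric matrix with diagonal blocks of sizes $m$, $m$, $k$; the system is impedance passive precisely when $\mathcal{L} \le 0$ and impedance conservative precisely when $\mathcal{L} = 0$.

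The conclusion is then extracted from the elementary fact that a real symmetric block matrix one of whose diagonal blocks vanishes can be negative semidefinite only if all off-diagonal blocks in that block row and column vanish (test the quadratic form against $\sbm{tv \\ w}$, $v$ supported on the vanishing diagonal block, and let $t \to \pm\infty$). Applied to the first diagonal block of $\mathcal{L}$, which is $0$, this forces $K^{-1/2}Q_1^T = 0$, hence $Q_1 = 0$; with $Q_1 = 0$ the condition $\mathcal{L} \le 0$ reduces to negative semidefiniteness of the lower-right block (of block size $2\times2$), whose $(k\times k)$ diagonal block is $0$, so the same fact forces $M^{-1/2}(F - 2Q_2^T) = 0$, hence $Q_2 = \tfrac12 F^T$; the only surviving requirement, $-2M^{-1/2}PM^{-1/2} \le 0$, holds automatically because $P$ is positive semidefinite. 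This gives the necessity in~(ii); conversely, substituting $Q_1 = 0$ and $Q_2 = \tfrac12 F^T$ collapses $\mathcal{L}$ to the block-diagonal matrix with diagonal blocks $0$, $-2M^{-1/2}PM^{-1/2}$, $0$, which is negative semidefinite, giving sufficiency. For~(iii) the identical argument run with $\mathcal{L} = 0$ in place of $\mathcal{L} \le 0$ additionally forces $M^{-1/2}PM^{-1/2} = 0$, i.e.\ $P = 0$, and the converse implication is immediate. I expect the only point requiring care to be keeping the $\tfrac{1}{\sqrt 2}$ and $\sqrt 2$ factors and the symmetry of the matrix square roots straight in the block computation of $\mathcal{L}$; the structural step via the vanishing-diagonal-block fact is short and carries the real content of~(ii) and~(iii).
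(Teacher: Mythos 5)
Your proposal is correct, and its overall architecture coincides with the paper's: part (i) by the direct substitution $x = \tfrac{1}{\sqrt2}\sbm{K^{1/2}z \\ M^{1/2}z'}$, and parts (ii)--(iii) by writing out the LMI of Proposition~\ref{ImpPassiveProp} for the realisation \eqref{eq:SpringConnection2} and exploiting the vanishing diagonal blocks. The one place where you genuinely diverge is the structural fact that carries the weight of (ii): that a symmetric matrix $\sbm{P & U \\ U^T & 0}$ with $P \geq 0$ can be positive semidefinite only if $U = 0$. The paper isolates this as Lemma~\ref{lemma:SpringConnection} and proves it by a fairly elaborate argument involving compressions to subspaces where a submatrix of $U$ is invertible, Schur-complement determinant identities, and a case split on the parity of $\operatorname{rank}(U)$. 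You instead test the quadratic form against $\sbm{v \\ tw}$ and let $t \to \pm\infty$, which kills the cross term in two lines. Your route is shorter, more elementary, and arguably the standard proof of this fact; the paper's determinant argument yields nothing extra here. Your block computation of $\mathcal{L}$ (including the $\sqrt2$ factors and the symmetry of $M^{1/2}$, $K^{1/2}$) checks out, and applying the vanishing-block fact first to the $x_1$-row (forcing $Q_1 = 0$) and then to the lower-right $2\times2$ block submatrix (forcing $Q_2 = \tfrac12 F^T$) is a legitimate reordering of the paper's two applications of its lemma; the sufficiency directions and the conservative case are immediate in both treatments.
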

\noindent Observe that if $M,P,K$ are positive scalars in a damped
mass-spring system, then $\norm{x(t)}^2 = \frac{1}{2} \norm{K^{1/2}
  z(t)}^2 + \frac{1}{2} \norm{M^{1/2} z'(t)}^2$ which is the sum of
the potential and kinetic energies. For this reason, the matrices $M$
and $K$ are called \emph{mass} and \emph{stiffness matrices},
respectively, and they define the physical energy norm of the system
requiring the normalisation $1/\sqrt{2}$ in equations. That $Q_1 = 0$
is a necessary condition for impedance passivity reflects the fact
that such physical systems must have co-located sensors and actuators
in the sense of, e.g., \cite{Curtain:RSC:2000}. A scattering
conservative analogue of Theorem~\ref{thm:SpringConnection} is given
in \cite[Theorems~1.1~and~1.2]{W-T:HTGCWPLSOOTA} in infinite
dimensions.
\begin{proof}
  Claim~\eqref{thm:SpringConnectionClaim1}: The transfer function of
  the system described by Eq.~\eqref{eq:SpringConnection} is given by
\begin{equation*}
  \CTF(s) = \left (Q_1 + s Q_2 \right ) \left (s^2 M + s P + K \right )^{-1} F
  = \left (\frac{Q_1}{s^2} + \frac{Q_2}{s} \right ) \left ( M + \frac{P}{s} + \frac{K}{s^2} \right)^{-1} F. 
\end{equation*}
Since $M$ is invertible, the matrix $M + \frac{ P}{s} + \frac{K}{s^2}$
is invertible for all $s$ with $\abs{s}$ large enough, implying $D_i
:= \lim_{\abs{s} \to \infty}{\CTF(s)} = 0$. Hence, the feedthrough
matrix $D_i$ vanishes for any realisation modelling
Eq.~\eqref{eq:SpringConnection}. Otherwise, it is a matter of
straightforward computations to see that
Eqs.~\eqref{eq:SpringConnection}~and~\eqref{eq:SpringConnection2} are
equivalent; see the proof of Corollary~\ref{thm:SpringConnectionCor}
where the invertibility of $K$ is not assumed.

Claim~\eqref{thm:SpringConnectionClaim2}: By
Proposition~\ref{ImpPassiveProp} and the invertibility of $K$ and $M$,
the system $\Sigma_i$ is impedance passive if and only if
\begin{equation} \label{eq:SpringConnection3}
  \bbm{\underbrace{\bbm{0}}_{m \times m} & \underbrace{\bbm{0}}_{m \times m} &  Q_1^T \\
    \underbrace{\bbm{0}}_{m \times m} &  P  &  Q_2^T -  \frac{1}{2} F  \\
    Q_1  &      Q_2 -  \frac{1}{2} F^T   & \underbrace{\bbm{0}}_{m \times m}}
  \geq \underbrace{\bbm{0}}_{3 m \times 3 m}.
\end{equation}
 To prove the nontrivial direction, assume that $\Sigma_i$ is
 impedance passive. Then both $P \geq 0$ and $\sbm{ P & T \\ T^T & 0 }
 \geq 0$ where $T := Q_2^T -\frac{1}{2}
 F$. Lemma~\ref{lemma:SpringConnection} implies that $T = 0$, i.e.,
 $Q_2 = \frac{1}{2} F^T$. Eq.~\eqref{eq:SpringConnection3} with $T =
 0$ implies $\sbm{ 0 & Q_1^T \\ \ Q_1 & 0} \geq 0$, and $Q_1 = 0$
 follows from Lemma~\ref{lemma:SpringConnection}.

 Claim~\eqref{thm:SpringConnectionClaim3}: This can be directly seen
 from Eq.~\eqref{eq:SpringConnection3} which is satisfied as an
 equality in the impedance conservative case by
 Proposition~\ref{ImpPassiveProp}.
\end{proof}

In many cases, the stiffness matrix $K \geq 0$ in
Eq.~\eqref{eq:SpringConnection} fails to be injective even though the
mass matrix $M > 0$ is invertible.  The system in
Section~\ref{FEMSubSec} is an example of this since the acoustic
velocity potential is defined only up to an additive constant, and
nothing in the observed physics depends on such a constant. Note that
if $Q_1 = 0$ (a necessary condition for passivity), there is nothing
in Eq.~\eqref{eq:SpringConnection2} that requires invertibility of
$K$. This motivates a variant of Theorem~\ref{thm:SpringConnection}:
\begin{cor} \label{thm:SpringConnectionCor}
  Let $M$, $P$, and $K$ be symmetric positive definite $m \times m$
  matrices, and assume that $M$ is invertible. Let $F$ be a $m \times k$
  matrix and $u(\cdot) \in C([0,\infty); \R^k)$. Then the
    following holds:
    \begin{enumerate}
    \item \label{thm:SpringConnectionCorClaim1}
      The linear system  $\Sigma_i$
      associated to the differential equation
      \begin{equation} \label{eq:SpringConnectionCor2}
    \begin{cases}
      x'(t) & = \bbm{0 & K^{1/2}M^{-1/2 }\\ -M^{-1/2 }K^{1/2} & - M^{-1/2 }PM^{-1/2}} x(t)
      +  \bbm{0\\  M^{-1/2 } F}u(t)\\
      y(t)  & =    \bbm{0 &  F^T M^{-1/2 } } x(t), \quad t \geq 0,
    \end{cases}
      \end{equation}
      is impedance passive.
    \item \label{thm:SpringConnectionCorClaim2} Let $\alpha \in \Range{K}$
      and $\beta \in \R^m$ be arbitrary.  Then the function $x \in
      C^1([0,\infty);\R^{2m})$, $x(t) = \sbm{x_1(t) \\ x_2(t)}$, satisfies
        Eq.~\eqref{eq:SpringConnectionCor2} with $x(0) = \sbm{\alpha
          \\ \beta}$ if and only if the function $z \in
        C^2([0,\infty);R^m)$, given by
          \begin{equation} \label{eq:SpringConnectionCor3}
            z(t) =  \int_0^t {M^{-1/2}x_2(\tau) \, d \tau} + \gamma, \quad  t \geq 0,
          \end{equation}
          satisfies $x_1(t) = K^{1/2} z(t)$ and
          \begin{equation} \label{eq:SpringConnectionCor4}
            \begin{cases}
              & M z''(t) + P z'(t) + K z(t) = F u(t),  \\
              & y(t) = F^T z'(t), \quad t \geq 0, \quad \text{and} \\
              & z(0) = \gamma, \quad  z'(0) = M^{-1/2} \beta
            \end{cases}
          \end{equation}
          for some (hence, for all)  $\gamma \in \R^m$ satisfying $K^{1/2}
          \gamma = \alpha$.
    \end{enumerate}
\end{cor}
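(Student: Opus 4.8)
The plan is to dispatch Claim~\eqref{thm:SpringConnectionCorClaim1} by a one-line application of Proposition~\ref{ImpPassiveProp}, and then to establish Claim~\eqref{thm:SpringConnectionCorClaim2} by recognising the two descriptions as reformulations of one another under the (here possibly many-to-one) change of variables $x_1 = K^{1/2} z$, $x_2 = M^{1/2} z'$. For Claim~\eqref{thm:SpringConnectionCorClaim1}, read off from Eq.~\eqref{eq:SpringConnectionCor2} the system matrices $A = \sbm{0 & K^{1/2}M^{-1/2} \\ -M^{-1/2}K^{1/2} & -M^{-1/2}PM^{-1/2}}$, $B = \sbm{0 \\ M^{-1/2}F}$, $C = \bbm{0 & F^{T}M^{-1/2}}$ and $D = 0$, and form the block matrix in Proposition~\ref{ImpPassiveProp}. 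Using only the symmetry of $M^{1/2}$, $K^{1/2}$, $P$, one gets $A + A^{T} = \sbm{0 & 0 \\ 0 & -2M^{-1/2}PM^{-1/2}}$, together with $B - C^{T} = 0$ and $-D - D^{T} = 0$, so that block matrix equals $\sbm{0 & 0 & 0 \\ 0 & -2M^{-1/2}PM^{-1/2} & 0 \\ 0 & 0 & 0} \le 0$ since $P \ge 0$; hence $\Sigma_i$ is impedance passive. Since this argument needs only $M > 0$ and $P, K \ge 0$, the invertibility of $K$ assumed in Theorem~\ref{thm:SpringConnection} plays no role, consistently with the remark in its proof.

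For Claim~\eqref{thm:SpringConnectionCorClaim2} I would first settle three preliminaries. First, since $K = K^{T} \ge 0$ we have $\Range{K} = \Range{K^{1/2}}$, so $\alpha \in \Range{K}$ is precisely the condition that some $\gamma$ with $K^{1/2}\gamma = \alpha$ exists, and any two such $\gamma$ differ by an element of $\Null{K^{1/2}} = \Null{K}$. Second, the equations $Mz'' + Pz' + Kz = Fu$, $y = F^{T}z'$ and the relation $x_1 = K^{1/2}z$ are unchanged under adding a vector of $\Null{K}$ to $z$, a change that is exactly absorbed by the additive constant in Eq.~\eqref{eq:SpringConnectionCor3} while remaining consistent with $z(0) = \gamma$; hence the asserted equivalence holds for one admissible $\gamma$ if and only if it holds for every one of them, which is the ``some (hence, for all)''. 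Third, if $x \in C^{1}$ and $z$ is given by Eq.~\eqref{eq:SpringConnectionCor3}, then $z' = M^{-1/2}x_2 \in C^{1}$, so automatically $z \in C^{2}$.

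The correspondence is then two short calculations. \emph{Forward:} if $x$ solves Eq.~\eqref{eq:SpringConnectionCor2} with $x(0) = \sbm{\alpha \\ \beta}$ and $z$ is defined by Eq.~\eqref{eq:SpringConnectionCor3}, then $\tfrac{d}{dt}(K^{1/2}z) = K^{1/2}M^{-1/2}x_2 = x_1'$ and $K^{1/2}z(0) = K^{1/2}\gamma = \alpha = x_1(0)$, so $x_1 = K^{1/2}z$ for all $t$; substituting $z' = M^{-1/2}x_2$ and $z'' = M^{-1/2}x_2'$ into the second row of Eq.~\eqref{eq:SpringConnectionCor2}, using $K^{1/2}x_1 = Kz$, and multiplying by $M$ yields $Mz'' + Pz' + Kz = Fu$, while the output $y = F^{T}z'$ and the data $z(0) = \gamma$, $z'(0) = M^{-1/2}\beta$ are immediate, so Eq.~\eqref{eq:SpringConnectionCor4} holds. \emph{Converse:} if the $z$ of Eq.~\eqref{eq:SpringConnectionCor3} satisfies $x_1 = K^{1/2}z$ and Eq.~\eqref{eq:SpringConnectionCor4}, then differentiating Eq.~\eqref{eq:SpringConnectionCor3} gives $x_2 = M^{1/2}z'$, hence $x_2' = M^{1/2}z'' = M^{-1/2}(Fu - Pz' - Kz) = -M^{-1/2}K^{1/2}x_1 - M^{-1/2}PM^{-1/2}x_2 + M^{-1/2}Fu$, which is the second row of Eq.~\eqref{eq:SpringConnectionCor2}; the first row is $x_1' = K^{1/2}z' = K^{1/2}M^{-1/2}x_2$, the outputs agree, and $x(0) = \sbm{K^{1/2}\gamma \\ M^{1/2}z'(0)} = \sbm{\alpha \\ \beta}$. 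I do not expect a genuine obstacle here: everything is elementary calculus, and the one point that needs care is precisely the $\Null{K}$-bookkeeping handled in the preceding paragraph.
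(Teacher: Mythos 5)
Your proposal is correct and follows essentially the same route as the paper's proof: Claim (i) via the LMI of Proposition~\ref{ImpPassiveProp} (you write out the block computation explicitly where the paper defers to the proof of Theorem~\ref{thm:SpringConnection}), and Claim (ii) via the substitution $x_1 = K^{1/2}z$, $x_2 = M^{1/2}z'$, with the identity $x_1 = K^{1/2}z$ pinned down by matching derivatives and initial values exactly as in the paper's $\delta = 0$ argument. Your explicit bookkeeping of the $\Null{K}$-ambiguity in $\gamma$ is a slightly more careful treatment of the ``some (hence, for all)'' clause than the paper's, but it is not a different method.
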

\noindent
Note that Eq.~\eqref{eq:SpringConnectionCor2} is equivalent with
Eq.~\eqref{eq:SpringConnection2} if $Q_1 = 0$ and $Q_2 = \tfrac{1}{2}
F^T$ apart from the normalisations by $1/\sqrt{2}$. In particular, the
transfer functions given by
Eqs.~\eqref{eq:SpringConnection2}~and~\eqref{eq:SpringConnectionCor2}
are the same. To simulate the input/output behaviour of these systems,
it is possible to use $\alpha = \beta = \gamma = 0$.
\begin{proof}
Claim~\eqref{thm:SpringConnectionCorClaim1} follows the same way as
Claim~\eqref{thm:SpringConnectionClaim2} of
Proposition~\ref{ImpPassiveProp} since nothing in the proof depends on
the invertibility of $K$. For the rest of the proof, fix $\alpha \in
\Range{K} = \Range{K^{1/2}}$ and $\beta \in \R^m$, and let $\gamma$ be
arbitrary such that $K^{1/2} \gamma = \alpha$ holds.

Claim~\eqref{thm:SpringConnectionCorClaim2}, necessity:   From
Eq.~\eqref{eq:SpringConnectionCor3} we get $x_2 = M^{1/2} z'$. Since
$x_1 = K^{1/2} z$, it follows that
\begin{equation*}
  \begin{aligned}
    & \bbm{0 & K^{1/2}M^{-1/2 }\\ -M^{-1/2 }K^{1/2} & - M^{-1/2 }PM^{-1/2}} \bbm{x_1(t) \\ x_2(t)}
    +  \bbm{0\\  M^{-1/2 } F}u(t) \\
    & = \bbm{K^{1/2}M^{-1/2 } x_2(t) \\ -M^{-1/2 }K^{1/2} x_1(t) - M^{-1/2 }PM^{-1/2} x_2(t) + M^{-1/2 } F u(t)} \\
    & = \bbm{K^{1/2} z'(t) \\ M^{-1/2 } \left (-K z(t) - P z'(t) +  F u(t) \right )}
    = \bbm{K^{1/2} z'(t) \\ M^{1/2 } z''(t) } = \bbm{x_1'(t) \\ x_2'(t)}
  \end{aligned}
\end{equation*}
where we used the assumption that Eq.~\eqref{eq:SpringConnectionCor4}
holds.  Obviously, $\bbm{0 & F^T M^{-1/2 } } x = F^T M^{-1/2 } x_2 =
F^T z' = y$ and $x(0) = \sbm{K^{1/2} z(0) \\ M^{1/2} z'(0)} =
\sbm{K^{1/2} \gamma \\ M^{1/2} M^{-1/2}\beta} = \sbm{\alpha \\ \beta}$
from which the necessity part follows.

Claim~\eqref{thm:SpringConnectionCorClaim2}, sufficiency: Assume that
$x(t) = \sbm{x_1(t) \\ x_2(t)}$ satisfies
Eq.~\eqref{eq:SpringConnectionCor2} with the initial condition $x(0) =
\sbm{\alpha \\ \beta}$, and define $z$ by
Eq.~\eqref{eq:SpringConnectionCor3} satisfying $z' = M^{-1/2} x_2$ as
well as the initial conditions $z(0) = \gamma$ and $z'(0) = M^{-1/2}
\beta$.  From the top row of the first equation in
Eq.~\eqref{eq:SpringConnectionCor2} we conclude that $x_1' = K^{1/2}
M^{-1/2} z_2 = K^{1/2} z'$, and thus $x_1 = K^{1/2} z + \delta$ for
some $\delta \in \R^{m}$. Now, $\alpha = x_1(0) = K^{1/2} z(0) +
\delta = K^{1/2} \gamma + \delta = \alpha + \delta$, and hence $\delta
= 0$. We have now concluded that $x = \sbm{x_1 \\x_2} = \sbm{K^{1/2} z
  \\ M^{1/2} z'}$, and the differential equation in
Eq.~\eqref{eq:SpringConnectionCor4} follows from
Eq.~\eqref{eq:SpringConnectionCor2} by the computation given in the
necessity part of this claim. Since the observation equations in
Eqs.~\eqref{eq:SpringConnectionCor2}~and~\eqref{eq:SpringConnectionCor4}
are equivalent, the proof is complete.
\end{proof}
It remains to give a technical lemma for the proof of
Theorem~\ref{thm:SpringConnection}:
\begin{lemma} \label{lemma:SpringConnection}
  Let $P$ and $U$ be $m \times m$ matrices with $P \geq 0$. Then
  $\sbm{ P & U \\ U^T & 0 } \geq 0$ if and only if $U = 0$.
\end{lemma}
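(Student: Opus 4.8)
The plan is to prove the two implications separately; the forward one is immediate and the reverse one rests on a single standard fact about positive semidefinite matrices. For the easy direction, if $U = 0$ then $\sbm{P & U \\ U^T & 0} = \sbm{P & 0 \\ 0 & 0}$, which is positive semidefinite precisely because $P \geq 0$.

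For the converse, suppose $M := \sbm{P & U \\ U^T & 0} \geq 0$. I would invoke the elementary fact that for a symmetric positive semidefinite matrix $M$ and a vector $v$ one has $v^T M v = 0 \implies Mv = 0$; this follows, e.g., by factoring $M = R^T R$ and observing $v^T M v = \norm{Rv}^2$, or from the Cauchy--Schwarz inequality for the positive semidefinite bilinear form $(a,b) \mapsto a^T M b$. Applying this with $v = \sbm{0 \\ y}$ for an arbitrary $y \in \R^m$, a one-line computation gives $v^T M v = y^T\, 0\, y = 0$, hence $Mv = \sbm{Uy \\ 0} = 0$, so $Uy = 0$. Since $y$ is arbitrary, $U = 0$, which completes the proof.

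I do not expect a genuine obstacle here; the only point that merits a moment's care is the justification of the auxiliary positive-semidefiniteness fact, which I would state in a single sentence. A fully elementary alternative, avoiding that fact, is to test the inequality $\sbm{x \\ z}^T M \sbm{x \\ z} \geq 0$ with $z = -t\, U^T x$ for a scalar $t > 0$; this yields $x^T P x - 2 t \norm{U^T x}^2 \geq 0$ for all $t > 0$, and letting $t \to \infty$ forces $U^T x = 0$ for every $x$, i.e. $U = 0$.
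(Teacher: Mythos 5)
Your proof is correct, and both of your arguments (the kernel argument and the scaling argument) are sound; the forward direction is trivial as you say, and the hypothesis $P \geq 0$ is in fact not even needed for the converse, since $M \geq 0$ already forces it. However, your route is genuinely different from --- and considerably more elementary than --- the paper's. The paper argues by contradiction: assuming $U \neq 0$, it changes basis so that $U = \bbm{U_0 & 0}$ with $U_0$ injective of rank $m_1$, compresses $\sbm{P & U \\ U^T & 0}$ to a $2m_1 \times 2m_1$ principal submatrix $Q = \sbm{\tilde P & \tilde U \\ \tilde U^T & 0}$ with $\tilde U$ invertible, and computes $\det Q = (-1)^{m_1}\det(\tilde U \tilde U^T)$ via a Schur-complement limit; when $m_1$ is odd this determinant is negative, contradicting positive semidefiniteness, and when $m_1$ is even a further one-dimensional compression reduces to the odd case. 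Your argument --- testing the quadratic form on vectors $\sbm{0 \\ y}$ and using that a positive semidefinite form vanishing at $v$ forces $Mv = 0$ (or, even more directly, the substitution $z = -t\,U^T x$ and $t \to \infty$) --- reaches the same conclusion in a few lines, with no basis change, no rank or parity case analysis, and no determinant computation. The only thing the paper's approach could be said to buy is that it stays entirely within finite determinant identities; your second variant is equally self-contained and is the argument I would recommend.
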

\begin{proof}
  Only the ``only if'' part requires proof. By a change of basis, we
  may assume without loss of generality that $U = \bbm{U_0 & 0}$ where
  $m_1 := \mathop{rank}(U) > 0$ and $U_0 = \bbm{\vec{u}_{1} & \ldots &
    \vec{u}_{m}}^T$ is an injective $m \times m_1$ matrix.

  Now, removing some of the row vectors $\vec{u}_{j_k} \in \R^{m_1}$
  of $U_0$ for $k = 1, \ldots, m - m_1$ from $U_0$, we get a $m_1
  \times m_1$ \emph{invertible} matrix $\tilde U$. Similarly removing
  $j_k$'th column and row vectors from $P$ for all $k = 1, \ldots, m -
  m_1$ we obtain $m_1 \times m_1$ matrix $\tilde P \geq 0$ so that $Q
  := \sbm{\tilde P & \tilde U \\ \tilde U^T & 0 }$ is a compression of
  $\sbm{ P & U \\ U^T & 0 }$ into a $2 m_1$-dimensional subspace with
  the additional property that $\tilde U$ is invertible.

  By the Schur complement, we observe that for any $\varepsilon \neq
  0$ $\mathop{det}{\sbm{\tilde P & \tilde U \\ \tilde U^T &
      \varepsilon I }} = \mathop{det}{\left ( \varepsilon \tilde P -
    \tilde U \tilde U^T \right )}$, and letting $\varepsilon \to 0$
  implies $\mathop{det}{Q} = \mathop{det}{\left ( - \tilde U \tilde
    U^T \right )} = (-1)^{m_1} \mathop{det}{\left (\tilde U \tilde U^T
    \right )}$ where $\mathop{det}{\left (\tilde U \tilde U^T \right
    )}> 0$ by positivity and invertibility.  If $m_1$ is odd, it
  directly follows that $\mathop{det}{Q} < 0$ which contradicts $Q
  \geq 0$ and, hence, $\sbm{ P & U \\ U^T & 0 }> 0$.

  It remains to consider the case when $m_1$ is even. Because
  $\mathop{det}(\tilde U) \neq 0$, some of its $(i,j)$ minors for $1
  \leq i,j \leq m_1$ is nonvanishing. Define the invertible $(m_1 - 1)
  \times (m_1 - 1)$ matrix $\hat U$ by removing the $i$'th row and
  $j$'th column from $\tilde U$.  Further, define $\hat P \geq 0$ by
  removing the $i$'th row and column from $\tilde P$. Thus, the matrix
  $\hat Q := \sbm{\hat P & \hat U \\ \hat U^T & 0 }$ is a compression
  of $Q$ into $2(m_1 - 1)$-dimensional subspace where $m_1 - 1$ is now
  odd. The above argument shows that $\hat Q \geq 0$ does not hold,
  and hence the same holds for $Q$ and $\sbm{ P & U \\ U^T & 0 }$,
  too.
\end{proof}

Another important class are \emph{scattering passive} systems.  System
$\Sigma = \sbm{ A&B \\ C & D }$ is scattering passive if the signals
in Eq.~\eqref{eq:systemDyn} satisfy the energy inequality
\begin{equation} \label{eq:ScatteringEnergyInEq}
  \frac{d}{dt} \norm{ x(t) }^2 \leq \norm{u(t)}^2 - \norm{y(t)}^2 \quad \text{for all} \quad t \in I.
\end{equation}
If this inequality is satisfied as an equality, then $\Sigma$ is
\emph{scattering conservative}.

A simple characterisation of scattering passivity in terms of LMI's
such as Eq.~\eqref{ImpPassiveProp} does not exists. However, a
formulation involving the resolvent of $A$ is given in
\cite[Theorem~11.1.5]{OS:WPLS} or
\cite[Proposition~5.2]{M-S-W:HTCCS}; see also
Proposition~\ref{ScatteringPassiveProp} below. However, scattering
conservative finite-dimensional systems can be characterised
concisely:
\begin{prop} \label{ScatteringConservativeProp}
  A linear system $\Sigma = \sbm{ A &B \\ C & D}$ is scattering
  conservative if and only if
\begin{equation*}
   A + A^T = - C^T C = - B B^T, \quad C = - D B^T \quad \text{and}
   \quad D^T D = I.
\end{equation*}
\end{prop}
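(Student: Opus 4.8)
The plan is to verify that scattering conservativity, i.e.\ equality in Eq.~\eqref{eq:ScatteringEnergyInEq} for all classical trajectories, is equivalent to the stated algebraic identities. The key observation is that scattering conservativity of $\Sigma = \sbm{A & B \\ C & D}$ is the same as \emph{impedance} conservativity of the \emph{externally Cayley transformed} system, so one could either invoke the external Cayley transformation together with Proposition~\ref{ImpPassiveProp}, or argue directly. I would argue directly, since the computation is short and self-contained. First, expand $\tfrac{d}{dt}\norm{x(t)}^2 = 2\ipd{x'(t)}{x(t)} = 2\ipd{Ax+Bu}{x} = \ipd{(A+A^T)x}{x} + 2\ipd{Bu}{x}$, and expand the right-hand side $\norm{u}^2 - \norm{y}^2 = \norm{u}^2 - \norm{Cx+Du}^2 = \norm{u}^2 - \norm{Cx}^2 - 2\ipd{Du}{Cx} - \norm{Du}^2$. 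Equating these for all $x$ and all $u$ (which are free, since $u$ is an arbitrary smooth signal and $x(0)$ is arbitrary, so the pair $(x(t_0),u(t_0))$ ranges over all of $\R^n \times \R^k$) and collecting the quadratic-in-$x$, bilinear, and quadratic-in-$u$ terms separately yields exactly
\begin{equation*}
  A + A^T = -C^T C, \qquad B = -C^T D, \qquad I = D^T D,
\end{equation*}
where the bilinear term gives $2\ipd{Bu}{x} = -2\ipd{Du}{Cx} = -2\ipd{C^T D u}{x}$ for all $x,u$, hence $B = -C^T D$.

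Next I would show these three identities are equivalent to the five listed in the proposition. From $D^T D = I$ with $D$ square, $D$ is orthogonal, so $D D^T = I$ as well. Then $B = -C^T D$ gives $B B^T = C^T D D^T C = C^T C$, so $A + A^T = -C^T C = -B B^T$. Also $B = -C^T D$ gives $B D^T = -C^T D D^T = -C^T$, i.e.\ $C = -D B^T$. Conversely, given the five identities in the proposition, $D^T D = I$ is immediate; $C = -D B^T$ and $D^T D = I$ give $D^T C = -D^T D B^T = -B^T$, i.e.\ $B = -C^T D$; and $A + A^T = -C^T C$ is among them. So the two sets of identities are equivalent, and combining with the first paragraph completes the proof.

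I do not expect a serious obstacle here; the only point requiring a moment's care is the justification that the pair $(x(t_0), u(t_0))$ genuinely ranges over all of $\R^n \times \R^k$, so that an identity of the form "$Q(x,u) = 0$ along all trajectories" forces the corresponding symmetric bilinear form to vanish identically. This follows because for any prescribed $x_0 \in \R^n$ and $u_0 \in \R^k$ one can choose a smooth input $u(\cdot)$ with $u(t_0) = u_0$ and solve Eq.~\eqref{eq:systemDyn} with initial state chosen so that $x(t_0) = x_0$ (e.g.\ run the dynamics forward from $t_0$, or simply take $I$ to contain $t_0$ as an interior point and prescribe $x(t_0)=x_0$ directly). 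Thus the energy equality, which is an equality of continuous functions of $t$, forces the pointwise quadratic identity on all of $\R^n \times \R^k$, and matching homogeneous components of the resulting polynomial identity is legitimate. Alternatively, one may cite that this is the finite-dimensional specialisation of the resolvent characterisation in \cite[Theorem~11.1.5]{OS:WPLS}, but the direct computation above is cleaner in this setting.
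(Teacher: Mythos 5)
Your direct computation is correct. It is worth noting that the paper does not actually prove this proposition: it simply cites the infinite-dimensional characterisation of conservative system nodes in \cite[Eqs.~(1.4)--(1.5)]{M-S-W:HTCCS}, of which the stated identities are the finite-dimensional specialisation. Your argument --- differentiating $\norm{x(t)}^2$ along trajectories, observing that $(x(t_0),u(t_0))$ sweeps all of $\R^n\times\R^k$, matching the homogeneous components of the resulting quadratic identity to get $A+A^T=-C^TC$, $B=-C^TD$, $D^TD=I$, and then passing to the listed form via orthogonality of the square matrix $D$ --- is exactly the elementary, self-contained proof one would insert in place of the citation; it buys independence from the system-node machinery of the reference at the cost of a few lines, and your care about why the pointwise identity may be matched degree by degree is precisely the point that needs saying. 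One caution: the parenthetical alternative you float (reducing to impedance conservativity of an externally Cayley transformed system and invoking Proposition~\ref{ImpPassiveProp}) would not work uniformly, since recovering the impedance system from $\Sigma$ requires $1\notin\sigma(D)$ as in Proposition~\ref{FInvProp}, which can fail for scattering conservative systems (e.g.\ $D=I$, $B=C=0$, $A+A^T=0$); since you argue directly, this does not affect your proof.
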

\noindent This follows from \cite[Eqs.~(1.4)--(1.5)]{M-S-W:HTCCS}.

We occasionally need also discrete time systems as time discretised
versions of $\Sigma$ as in Section~\ref{ResultsSubSec}. Also these
systems are defined in terms of quadruples of matrices $\phi =
\spm{A_d &B_d \\ C_d & D_d}$ associated with the difference equations
\begin{equation}
  \label{eq:systemDynDisc}
  \begin{aligned}
    & \begin{cases}
        x_{j+1} & = A_d x_j +  B_d u_j, \\
        y_j & = C_d x_j +   D_d u_j,
      \end{cases}
\quad \text{or, briefly,} \quad \bbm{x_{j + 1} \\ y_j } = \bbm{A_d &B_d \\ C_d &
  D_d} \bbm{x_j \\ u_j } \\
&  \quad \quad \quad \text{for all } j \in J \subset \{ \ldots -1, 0, 1, 2, \ldots \}.
  \end{aligned}
\end{equation}
We call system $\phi$ \emph{discrete time scattering passive} if
\begin{equation} \label{eq:DiscreteTimeEnergyBalance}
  \norm{ x_{j+1} }^2 -   \norm{ x_{j} }^2 \leq \norm{ u_{j} }^2 - \norm{ y_{j} }^2 \quad \text{for all } j \in J,
\end{equation}
and \emph{discrete time impedance passive} if
\begin{equation} \label{eq:DiscreteTimeEnergyBalanceImp}
  \norm{ x_{j+1} }^2 - \norm{ x_{j} }^2 \leq 2 \left < u_{j}, y_{j}
  \right > \quad \text{for all } j \in J.
\end{equation}
Moreover, such $\phi$ is \emph{scattering [impedance] conservative} if
the respective inequality is satisfied as an equality.
\begin{prop} \label{ScatteringPassivePropDiscrete}
  Let $\phi = \spm{A_d &B_d \\ C_d & D_d}$ a discrete time linear
  system.  Then $\phi$ is scattering passive if and only if
\begin{equation} \label{eq:ScatteringPassivePropDiscrete}
  \bbm{A_d &B_d \\ C_d & D_d}^T \bbm{A_d &B_d \\ C_d & D_d} \leq
\bbm{I & 0 \\ 0 & I}.
\end{equation}
Similarly, $\phi$ is impedance passive if and only if
\begin{equation} \\ \label{eq:ImpedancePassivePropDiscrete}
  \bbm{I - A_d^T A_d & C_d^T - A_d^T B_d \\
    C_d - B_d^T A_d   & D_d + D_d^T - B_d^T B_d } \geq
\bbm{0 & 0 \\ 0 & 0}.
\end{equation}
The system is scattering or impedance conservative if and only if the
respective inequality holds as an equality.
\end{prop}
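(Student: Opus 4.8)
The statement to be proved is Proposition~\ref{ScatteringPassivePropDiscrete}, the discrete-time analogue of Propositions~\ref{ImpPassiveProp} and~\ref{ScatteringConservativeProp}. The plan is to prove the scattering part and the impedance part separately, and in each case to observe that the defining energy inequality, being a quadratic constraint that must hold \emph{for all} admissible data $(x_j,u_j)$, is equivalent to the negative (resp.\ positive) semidefiniteness of a single symmetric matrix that one reads off by expanding the inequality. Throughout I would use the fact that in finite dimensions every pair $(x_j,u_j)$ can be prescribed arbitrarily, so the difference equation~\eqref{eq:systemDynDisc} imposes no hidden restriction on which vectors $\sbm{x_j \\ u_j}$ occur.

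\textbf{Scattering case.} Starting from~\eqref{eq:DiscreteTimeEnergyBalance}, substitute $x_{j+1}=A_d x_j + B_d u_j$ and $y_j = C_d x_j + D_d u_j$. Writing $v := \sbm{x_j \\ u_j}$, the left-hand side $\norm{x_{j+1}}^2 - \norm{x_j}^2$ equals $v^T\!\left(\sbm{A_d & B_d}^T\!\sbm{A_d & B_d} - \sbm{I & 0 \\ 0 & 0}\right)v$, and the right-hand side $\norm{u_j}^2 - \norm{y_j}^2$ equals $v^T\!\left(\sbm{0 & 0 \\ 0 & I} - \sbm{C_d & D_d}^T\!\sbm{C_d & D_d}\right)v$. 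Rearranging, the inequality~\eqref{eq:DiscreteTimeEnergyBalance} holds for the trajectory if and only if
\begin{equation*}
  v^T\!\left( \bbm{A_d & B_d \\ C_d & D_d}^T \bbm{A_d & B_d \\ C_d & D_d} - \bbm{I & 0 \\ 0 & I} \right) v \le 0.
\end{equation*}
Since $v$ ranges over all of $\R^{n+k}$ as $j$ and the data vary, this holds for all admissible $j\in J$ (and all choices of data) precisely when the bracketed symmetric matrix is $\le 0$, which is~\eqref{eq:ScatteringPassivePropDiscrete}. Equality in~\eqref{eq:DiscreteTimeEnergyBalance} for all data is equivalent to the quadratic form vanishing identically, i.e.\ to equality in~\eqref{eq:ScatteringPassivePropDiscrete}, by polarisation of the symmetric form.

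\textbf{Impedance case.} Proceed identically from~\eqref{eq:DiscreteTimeEnergyBalanceImp}. Here $\norm{x_{j+1}}^2 - \norm{x_j}^2 = v^T\!\left(\sbm{A_d & B_d}^T\!\sbm{A_d & B_d} - \sbm{I & 0 \\ 0 & 0}\right)v$ as before, while $2\langle u_j, y_j\rangle = 2 u_j^T(C_d x_j + D_d u_j)$, which as a quadratic form in $v$ is $v^T \sbm{0 & C_d^T \\ C_d & D_d + D_d^T} v$ (symmetrising the cross term $u_j^T C_d x_j$). So~\eqref{eq:DiscreteTimeEnergyBalanceImp} holds for all data iff
\begin{equation*}
  v^T \left( \bbm{0 & C_d^T \\ C_d & D_d + D_d^T} - \bbm{A_d^T A_d - I & A_d^T B_d \\ B_d^T A_d & B_d^T B_d} \right) v \ge 0 \quad\text{for all } v,
\end{equation*}
i.e.\ the displayed symmetric matrix is $\ge 0$; collecting the blocks gives exactly~\eqref{eq:ImpedancePassivePropDiscrete}. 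Conservativity again corresponds to equality by polarisation.

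\textbf{Main obstacle.} There is essentially no deep obstacle here; this is the routine ``LMI from an energy balance'' computation, and the only point needing a word of care is the claim that the difference equation imposes no constraint on $v = \sbm{x_j \\ u_j}$ — so that a quadratic form nonnegative \emph{on trajectory data} is genuinely nonnegative on all of $\R^{n+k}$. This is immediate in the present finite-dimensional, unconstrained setting (for any $j$ one may choose $x_j$ and $u_j$ freely, and then $x_{j+1}$, $y_j$ are determined), but it is worth stating explicitly since it is exactly the place where the finite-dimensional discrete-time argument is cleaner than its continuous-time counterpart. The bookkeeping in symmetrising the cross terms (the factor handling $u_j^T C_d x_j + x_j^T C_d^T u_j$) is the only spot where a sign or transpose could slip, so I would write that block out carefully and then simply transcribe the result.
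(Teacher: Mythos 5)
Your proof is correct and is exactly the argument the paper intends: the paper disposes of this proposition with the one-line remark that the two LMIs are "equivalent with" the energy inequalities "by Eq.~\eqref{eq:systemDynDisc}", i.e.\ precisely the substitute-and-read-off-the-quadratic-form computation you carry out in detail. Your explicit note that $\sbm{x_j \\ u_j}$ ranges over the whole space is a worthwhile clarification but not a departure from the paper's route.
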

\noindent Indeed, 
Eqs.~\eqref{eq:ScatteringPassivePropDiscrete}~and~\eqref{eq:ImpedancePassivePropDiscrete}
are equivalent with
Eq.~\eqref{eq:DiscreteTimeEnergyBalance}~and~\eqref{eq:DiscreteTimeEnergyBalanceImp}, 
respectively, by Eq.~\eqref{eq:systemDynDisc}.

We can now give a characterisation for passive continuous time systems
$\Sigma = \sbm{A & B \\ C & D}$ in terms of discrete time systems:
\begin{prop} \label{ScatteringPassiveProp}
  Let $\Sigma = \sbm{ A &B \\ C & D}$ a linear system whose internal
  Cayley transform defined in Section~\ref{InternalCayleySec} below is
  denoted by $\phi_\sigma = \spm{A_\sigma & B_\sigma \\ C_\sigma &
    D_\sigma}$ for $\sigma > 0$.  Then the following are equivalent:
  \begin{enumerate}
  \item \label{ScatteringPassivePropClaim1}
    $\Sigma$ is scattering [impedance] passive;
  \item \label{ScatteringPassivePropClaim2}
    $\phi_\sigma$ is discrete time scattering [impedance] passive for some $\sigma > 0$; and
  \item \label{ScatteringPassivePropClaim3} $\phi_\sigma$ is discrete
    time scattering [impedance] passive for all $\sigma > 0$.
     \end{enumerate}
The equivalences remain true if the word ``passive'' is replaced by ``conservative''.
\end{prop}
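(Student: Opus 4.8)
The plan is to reduce the continuous-time passivity characterisations to their discrete-time counterparts (Propositions~\ref{ScatteringPassivePropDiscrete} and~\ref{ImpPassiveProp}) via the internal Cayley transform, and to exploit the fact that the internal Cayley transform is an \emph{algebraic} bijection on realisations that is well-behaved in the parameter $\sigma$. Concretely, once the formulas for $\phi_\sigma = \spm{A_\sigma & B_\sigma \\ C_\sigma & D_\sigma}$ in terms of $\sbm{A & B \\ C & D}$ are available from Section~\ref{InternalCayleySec}, the entire statement becomes a matter of showing that the relevant LMIs transform into one another. The implications (iii)$\Rightarrow$(ii) are trivial, so the real content is (i)$\Leftrightarrow$(ii) and (i)$\Leftrightarrow$(iii); and since the Cayley transform formulas are uniform in $\sigma>0$, it suffices to prove (i)$\Leftrightarrow$(ii) \emph{for an arbitrary fixed} $\sigma>0$, which simultaneously yields (i)$\Rightarrow$(iii).

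First I would recall from Section~\ref{InternalCayleySec} that $\phi_\sigma$ is obtained from $\Sigma$ by a substitution of the form $A_\sigma = (\sigma I + A)(\sigma I - A)^{-1}$ (for $\sigma \notin \operatorname{spec}(A)$, which holds for all but finitely many $\sigma>0$ and, for passive $\Sigma$, for all $\sigma>0$ since $\operatorname{spec}(A)\subset\overline{\C^-}$ when $A+A^T\le 0$), together with the companion formulas for $B_\sigma, C_\sigma, D_\sigma$ involving the factor $(\sigma I - A)^{-1}$ and a normalisation by $\sqrt{2\sigma}$. The key algebraic identity is that the block matrix $\bbm{A_\sigma & B_\sigma \\ C_\sigma & D_\sigma}$ factors as $\Phi\bigl(\sbm{A & B \\ C & D}\bigr)$ where $\Phi$ is a fractional-linear map; and under this map the discrete-time scattering LMI $\phi_\sigma^T\phi_\sigma \le I$ pulls back, after multiplying on both sides by the invertible matrices $\operatorname{diag}((\sigma I - A^T),I)$ and $\operatorname{diag}((\sigma I - A),I)$, to precisely the continuous-time scattering availability condition (which by Proposition~\ref{ScatteringPassiveProp}'s stated references, \cite[Theorem~11.1.5]{OS:WPLS} or \cite[Proposition~5.2]{M-S-W:HTCCS}, is equivalent to scattering passivity of $\Sigma$). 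The analogous computation for the impedance case turns the discrete-time LMI~\eqref{eq:ImpedancePassivePropDiscrete} into the continuous-time LMI of Proposition~\ref{ImpPassiveProp}: here the congruence is cleaner because Proposition~\ref{ImpPassiveProp} is itself an LMI in $A,B,C,D$, so one only needs to check that congruence by $\operatorname{diag}((\sigma I - A),I)^{-1}$ (on one side) carries $\sbm{I - A_\sigma^T A_\sigma & C_\sigma^T - A_\sigma^T B_\sigma \\ C_\sigma - B_\sigma^T A_\sigma & D_\sigma + D_\sigma^T - B_\sigma^T B_\sigma}$ to a positive scalar multiple of $-\sbm{A^T+A & B-C^T \\ B^T-C & -D^T-D}$. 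Since congruence preserves positive (semi)definiteness and definiteness-as-equality, passivity transfers in both directions, and the conservative case is handled verbatim by replacing ``$\le$'' with ``$=$'' throughout.

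The main obstacle, and the step requiring genuine care rather than bookkeeping, is the scattering \emph{passive} equivalence (i)$\Leftrightarrow$(ii): unlike the impedance case there is no finite-dimensional LMI directly in $(A,B,C,D)$ to congruence against, so one must instead phrase continuous-time scattering passivity through the resolvent condition of \cite[Theorem~11.1.5]{OS:WPLS} / \cite[Proposition~5.2]{M-S-W:HTCCS} and verify that \emph{this} condition, evaluated at the single point $s=\sigma$, is exactly $\phi_\sigma^T\phi_\sigma\le I$ — and then argue that holding at one $\sigma>0$ forces it at all $\sigma>0$. The cleanest route is: scattering passivity of $\Sigma$ $\iff$ the transfer function $\CTF_\Sigma$ is a Schur function on $\C^+$ together with minimality-type bookkeeping, which via the inverse Cayley transform is equivalent to $\hat\phi_\sigma$ being a discrete-time Schur function on $\D$, which by Proposition~\ref{ScatteringPassivePropDiscrete} (applied to a convenient realisation) is the discrete-time scattering LMI. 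Because the internal Cayley transform of $\Sigma$ at parameter $\sigma$ and at parameter $\sigma'$ are related by another (discrete-time, invertible) Möbius change of variable on the realisation, discrete-time scattering passivity of $\phi_\sigma$ for one $\sigma$ propagates to all $\sigma'$; this gives (ii)$\Rightarrow$(iii) and closes the cycle. I would present the impedance case in full (a one-line congruence) and the scattering case by citing the resolvent characterisation and performing the single substitution $s=\sigma$, remarking that the $\sigma$-independence follows because all the $\phi_\sigma$ are related by invertible fractional-linear maps of realisations.
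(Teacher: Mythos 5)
Your overall strategy coincides with the paper's: the impedance case is handled by a congruence transformation carrying the discrete-time LMI of Proposition~\ref{ScatteringPassivePropDiscrete} to the continuous-time LMI of Proposition~\ref{ImpPassiveProp} (uniformly in $\sigma>0$, which is why proving the fixed-$\sigma$ equivalence closes the whole cycle, as you note), and the scattering case is delegated to the cited resolvent characterisation \cite[Theorem~11.1.5]{OS:WPLS}, \cite[Proposition~5.2]{M-S-W:HTCCS} --- which is exactly what the paper does; it offers no independent argument for the scattering part beyond that citation. One concrete correction: the congruence you propose, by $\mathrm{diag}\bigl((\sigma I - A)^{-1}, I\bigr)$, does not work. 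The bottom-right block $D_\sigma + D_\sigma^T - B_\sigma^T B_\sigma$ of the discrete-time LMI contains resolvent terms in $B$ (from $D_\sigma = D + C(\sigma-A)^{-1}B$ and from $B_\sigma^T B_\sigma$) that a block-diagonal congruence cannot generate from $-(D+D^T)$; the identity in the paper uses the upper-triangular factor $\sbm{\sqrt{2\sigma}(\sigma - A)^{-1} & (\sigma - A)^{-1} B \\ 0 & I}$, whose off-diagonal block is essential. A second caveat: your alternative route ``scattering passive $\iff$ $\CTF_\Sigma$ is a Schur function plus minimality bookkeeping'' conflates a property of the transfer function with a property of the realisation --- for a non-minimal realisation the Schur property does not imply the state-space energy inequality \eqref{eq:ScatteringEnergyInEq} --- so that detour should be dropped in favour of the resolvent condition you also invoke. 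With the corrected congruence matrix and the citation for the scattering part, your argument is the paper's.
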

\noindent The scattering passive part is a finite-dimensional special
case of \cite[Proposition~5.2]{M-S-W:HTCCS} or
\cite[Theorem~3.3(v)]{Staffans:PCCT:2002}. For the impedance passive
part, see \cite[Theorem~4.2(v)]{Staffans:PCCT:2002} or use
Proposition~\ref{ImpPassiveProp} with the identity
\begin{equation*}
\begin{aligned}
 - & \bbm{\sqrt{2 \sigma }\left ( \sigma - A^T \right )^{-1} & 0 \\ B^T \left ( \sigma - A^T \right )^{-1} & I}
\bbm{A +A^T &  B - C^T   \\ B^T - C   & - D - D^T}
 \bbm{\sqrt{2 \sigma }\left ( \sigma - A \right )^{-1} &  \left ( \sigma - A \right )^{-1}B \\ 0 & I} \\
= & \bbm{I - A_\sigma^T A_\sigma & C_\sigma^T - A_\sigma^T B_\sigma \\
  C_\sigma - B_\sigma^T A_\sigma & D_\sigma + D_\sigma^T - B_\sigma^T B_\sigma}\quad\text{for all } \sigma > 0.
\end{aligned}
\end{equation*}

\section{\label{SystemSection} Transfer functions, realisations, and signals}

As discussed in the introduction, practical applications may require
treating time-domain and frequency-domain models in a same
framework. Passive linear systems $\Sigma = \sbm{A & B \\C & D}$ were
reviewed in time domain in Section~\ref{BackgroundSec}, and it remains
to give the frequency-domain description in terms of their transfer
functions
\begin{equation} \label{eq:ContinuousTimeTF}
  \CTF_{\Sigma}(s) := D + C \left (s - A \right )^{-1} B \quad \text{for } s \notin \sigma(A).
\end{equation}
Given a matrix-valued rational function $g(s)$, we call $\Sigma$ a
\emph{realisation} of $g(s)$ if $g(s) = \CTF_{\Sigma}(s)$ for
infinitely many $s \in \C$.  Manipulating realisations is one way of
carrying out computations on rational functions (such as required by
feedback systems analysis) in terms of matrix computations. 

The first step is to describe the associative, typically
non-commutative algebra (with unit) of rational transfer functions
$\CTF_{\Sigma}(s)$ where the addition, scalar multiplication, and
product of elements stand out as the elementary operations.  Of
course, the input/output signal dimension $m$ of system $\Sigma$ must
be the same within such algebra for these operation to be universally
feasible. Since the same rational transfer function has an infinite
number of realisation, of which only some are controllable and
observable (i.e., have a minimal state space), the algebraic structure
must be described in terms of equivalence classes as described in
Section~\ref{RealAlgSection} below. Symbolic and numerical
computations must be carried out in terms of representatives of these
classes, using specific formulas and system diagrams for realisations
introduced in Sections~\ref{DiagrammeSec}~and~\ref{FundamentalSec}.

\subsection{\label{RealAlgSection} Realisation algebra}

Rational $m \times m$ matrix-valued rational functions constitute an
algebra that can be described in terms of equivalence classes of
realisations.  This provides us a way of carrying out practical
computations with transfer functions by using numerical linear algebra
on their conveniently chosen realisations.  We proceed to give a
description of the rules of calculation involved.

We denote by $\Sigma = \sbm{A & B \\ C & D}$, $\Sigma_p =
\sbm{A_p&B_p\\C_p&D_p}$, and $\Sigma_q= \sbm{A_q&B_q\\C_q&D_q}$ linear
systems with $m$-dimensional input and output signals.  All the
matrices are assumed to be real, and the transfer functions of
$\Sigma_p$ and $\Sigma_q$ are given by $\CTF_p(s) =D_p +
C_p(sI-A_p)^{-1}B_p$ and $\CTF_q(s) =D_q + C_q(sI-A_q)^{-1}B_q$ where
we take the liberty of using complex valued $s$.
\begin{defn}
  \label{EquivalenceClassDef}
  \begin{enumerate}
  \item Linear systems $\Sigma_p$ and $\Sigma_q$ are \emph{I/O
    equivalent} if $\CTF_p(s) = \CTF_q(s)$ for infinitely many $s \in
    \C$.  I/O equivalence is denoted by $\Sigma_p \sim \Sigma_q$.
  \item The equivalence class containing $\Sigma_p$ is denoted by
    $\left[ \Sigma_p \right] := \left\lbrace\Sigma \, : \,
    \Sigma\sim\Sigma_p \right\rbrace$.
  \item For $m \geq 1$, we denote 
    \begin{equation*}
      \aleph_m := \{ \left [\Sigma \right ] \,
    : \, \text{The input and output signals of } \Sigma \text{ are } m
    \text{ dimensional } \}.
    \end{equation*}
  \end{enumerate}
\end{defn}
Thus, the equivalence class $\left[ \Sigma_p \right]$ and the transfer
function $\CTF_p(s)$ are in one-to-one correspondence.  In any
nontrivial equivalence class, say $\left[ \Sigma' \right]$, there are
infinitely many systems $\Sigma \in \left[ \Sigma' \right]$ that are
\emph{minimal} in the sense that they are observable and controllable;
i.e.,
\begin{equation*}
  \mathop{rank}{\bbm{B & A B & \ldots A^{n-1} B}} =
  \mathop{rank}{\bbm{C^* & A^* C^* & \ldots A^{*(n-1)} C^*}} = n
\end{equation*}
where $A$ is a $n \times n$ matrix.  Given a transfer function
$\CTF_\Sigma(\cdot)$ of a minimal $\Sigma$, the number $n$ is called
the \emph{McMillan degree} of $\CTF_\Sigma(\cdot)$. It is well known
that two minimal systems $\Sigma_p = \sbm{A_p&B_p\\C_p&D_p}$ and
$\Sigma_q= \sbm{A_q&B_q\\C_q&D_q}$ having the same transfer function
are \emph{state space isomorphic} in the sense that $A_p = T^{-1} A_q
T$, $B_p = T^{-1} B_q$, $C_p = C_q T$, and $D_p = D_q$ for some
invertible matrix $T$. A non-minimal system $\Sigma$ can always be
reduced to some minimal system by standard linear algebra means. For
these facts, see any classical text on algebraic control theory such
as \cite{PAF:LSOHS} and \cite{K-F-A:TMST}.
Minimisation and state space isomorphism typically changes all of the
original matrices $A$, $B$, and $C$ (that may bear resemblance to,
e.g., the physical parameters of the problem) to an unrecognisable
form which diminishes the appeal of them in applications.
\begin{defn}
  \label{BasicSystemOpsDef}
  Let $\Sigma_p$ and $\Sigma_q$ be linear systems with $m$-dimensional
  state space.
  \begin{enumerate}
  \item For any $c \in \R$ the \emph{scalar multiple} of $\Sigma_p$
    is
    \begin{equation*}
      \label{eq:scalar}
      c\Sigma_p = \bbm{A_p& B_p \\cC_p & cD_p}. 
    \end{equation*} 

  \item The \emph{parallel sum} $+$ for $\Sigma_p$ and $\Sigma_q$ is 
    \begin{equation*}  
      \Sigma_p + \Sigma_q =\
      \bbm{\begin{matrix} A_p & 0 \\ 0 & A_q\end{matrix} &
        \begin{matrix}B_p\\ B_q \end{matrix} \\
      \begin{matrix} C_p &  C_q\end{matrix} & D_p+D_q}.
    \end{equation*}
  \item The \emph{cascade product} $*$ for $\Sigma_p$ and $\Sigma_q$ is 
    \begin{equation*}
      \Sigma_p * \Sigma_q 
      = \bbm{
        \begin{matrix} A_p & B_pC_q \\ 0 & A_q \end{matrix} &
        \begin{matrix} B_p D_q \\ B_q \end{matrix} \\
        \begin{matrix} C_p & D_p C_q \end{matrix} &
        \begin{matrix} D_p D_q \end{matrix}
      }.
    \end{equation*}
  \end{enumerate}
\end{defn}
If both $\Sigma_p$ and $\Sigma_q$ are minimal, then so are $\Sigma_p +
\Sigma_q$ and $c\Sigma_p$. However, the system $\Sigma_p * \Sigma_q$
can fail to be minimal since zero/pole cancellations may take place in
the product of transfer functions.  Obviously, $\CTF_{\Sigma_p *
  \Sigma_q}(s) = \CTF_{\Sigma_p}(s) \CTF_{\Sigma_q}(s)$,
$\CTF_{\Sigma_p + \Sigma_q}(s) = \CTF_{\Sigma_p}(s) +
\CTF_{\Sigma_q}(s)$, and $\CTF_{c \Sigma_p}(s) = c\CTF_{\Sigma_p}(s)$
for all but a finite number of $s \in \C$.  Hence, the operation in
Definition~\ref{BasicSystemOpsDef} can be extended to any $S_m$ by
setting
\begin{equation} \label{eq:MathOpsClasses}
  c \left [ \Sigma \right ] := \left [ c \Sigma \right ], \quad 
  \left [ \Sigma_p \right ] + \left [ \Sigma_q \right ] := 
  \left [ \Sigma_p + \Sigma_q \right ], \text{ and } 
  \left [ \Sigma_p \right ] * \left [ \Sigma_q \right ] := 
  \left [ \Sigma_p * \Sigma_q \right ]. \quad 
\end{equation}
\begin{prop}
  Let $m \geq 1$ and $\aleph_m$ as in
  Definition~\ref{EquivalenceClassDef}.  Equipped with the operations
  in Eq.~\eqref{eq:MathOpsClasses}, the set $\aleph_m$ becomes an
  associative algebra over $\R$ with unit (i.e., $(\aleph_m, + )$ is a
  real vector space, and $(\aleph_m, +, * )$ is a ring.)
\end{prop}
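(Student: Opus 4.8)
The plan is not to verify the algebra axioms on the state-space quadruples directly but to transport the structure from the set $\mathcal{R}_m$ of \emph{proper} $m\times m$ matrix-valued rational functions (those having a finite limit at $s=\infty$), which is a standard object. The bridge is the map $\Phi\colon\aleph_m\to\mathcal{R}_m$, $\Phi([\Sigma]):=\CTF_\Sigma$. As noted in the discussion following Definition~\ref{EquivalenceClassDef}, $\Phi$ is well defined and injective because, by definition, two systems are I/O equivalent exactly when their transfer functions coincide as rational functions (two rational functions agreeing at infinitely many points being equal); and $\Phi$ is surjective because every proper rational $m\times m$ matrix function is the transfer function of some finite-dimensional $\Sigma$ --- e.g.\ a controllable canonical realisation. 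Hence $\Phi$ is a bijection.

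Next I would check that $\Phi$ carries the operations of Eq.~\eqref{eq:MathOpsClasses} to the pointwise operations on $\mathcal{R}_m$. For all but finitely many $s$ one has $\CTF_{c\Sigma}(s)=c\,\CTF_\Sigma(s)$, $\CTF_{\Sigma_p+\Sigma_q}(s)=\CTF_{\Sigma_p}(s)+\CTF_{\Sigma_q}(s)$ and $\CTF_{\Sigma_p*\Sigma_q}(s)=\CTF_{\Sigma_p}(s)\,\CTF_{\Sigma_q}(s)$, as recorded after Definition~\ref{BasicSystemOpsDef}; since a rational function is determined by its values at infinitely many points, these are identities in $\mathcal{R}_m$. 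In particular the right-hand sides depend only on the classes $[\Sigma],[\Sigma_p],[\Sigma_q]$, so Eq.~\eqref{eq:MathOpsClasses} really does define operations on $\aleph_m$, independently of the chosen representatives, and once the target is known to be an algebra $\Phi$ is an algebra isomorphism.

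It then remains to observe that $\mathcal{R}_m$ is itself an associative $\R$-algebra with unit, which is routine: it is an $\R$-linear subspace of the matrix algebra $M_m(\R(s))$ over the field $\R(s)$ of scalar rational functions, it is closed under the matrix product (a product of two functions each having a limit at $s=\infty$ again has one there), and it contains the constant $I_m$, which is proper and is realised, for instance, by $\sbm{-I_m & 0\\ 0 & I_m}$. Associativity and commutativity of $+$, two-sided distributivity, and the mixed laws $c(fg)=(cf)g=f(cg)$ are inherited pointwise from $\R^{m\times m}$. Pulling everything back through $\Phi$ then shows that $(\aleph_m,+)$ is a real vector space and $(\aleph_m,+,*)$ is a ring with unit $[\sbm{-I_m & 0\\ 0 & I_m}]$, as claimed. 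I expect no genuine obstacle here; the only point that deserves to be stated carefully is the well-definedness asserted in Eq.~\eqref{eq:MathOpsClasses}, and it is exactly this that forces the detour through transfer functions, since a single class contains minimal and non-minimal realisations of many different state dimensions on which the formulas of Definition~\ref{BasicSystemOpsDef} produce genuinely different quadruples.
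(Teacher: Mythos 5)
Your proof is correct and follows exactly the second of the two routes the paper itself indicates after the proposition, namely transporting the algebra structure through the one-to-one correspondence between equivalence classes and (proper) rational matrix-valued transfer functions; the paper leaves this as a remark, and you have simply filled in the routine details (well-definedness and bijectivity of $\Phi$, the intertwining identities, and the algebra axioms for proper rational matrices). No issues.
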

\noindent The required properties can be checked either directly from
Eq.~\eqref{eq:MathOpsClasses} in terms of elements of equivalence
classes, or by observing the one-to-one correspondence with rational
matrix-valued transfer functions known to be an algebra.

\subsection{\label{DiagrammeSec} System diagrams and splitted signals}

We proceed by splitting the $m$-dimensional input and the output
signals of $\Sigma = \sbm{A& B \\ C & D}$. The need for such splitting
arises from the fact that we consider the system $\Sigma$ represent a
\emph{two-port} of \emph{four-pole} following~\cite{HK:CSAH}. Since
the objective is to couple such systems in various ways, rules for
drawing \emph{system diagrams} are proposed.\footnote{The rules for
  drawing the system diagrams differ from those used in \cite{HK:CSAH}
  to emphasise the directions of the signals consistently with their
  dynamical equations.} It appears that much of the proofs for results
on couplings and feedbacks can be carried out simply by considering
diagrams.

\begin{figure}[h]
  \centering
  \includegraphics[scale=1.5]{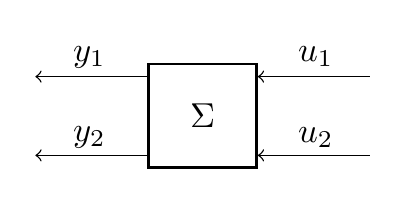}
  \includegraphics[scale=1.5]{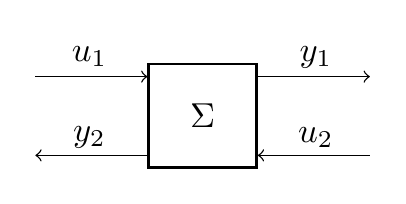}
  \includegraphics[scale=1.5]{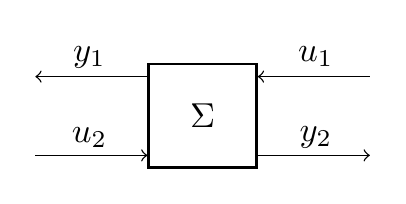}
  \includegraphics[scale=1.5]{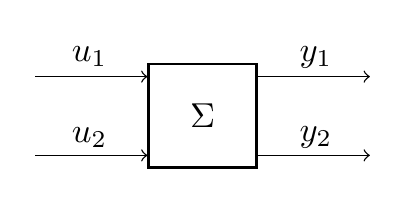}
  \caption[Equivalent systems]{Four equivalent diagrams describing the same system
    $\Sigma = \sbm{A&B \\ C& D}$ associated to Eq.~\eqref{eq:systemDyn}.
    The first of the diagrams is drawn in  \emph{standard form} as
    defined in the text.}
  \label{fig:SystemDiagram}
\end{figure}

\begin{sta}\label{SecondSA}
  We assume that the common dimension $m$ of the input and output
  signals $u(\cdot)$, $y(\cdot)$ in Eq.~\eqref{eq:systemDyn} is even,
  and the signals are splitted
  \begin{equation*}
    u(t) = \bbm{u_1(t) \\ u_2(t)} \quad \text{and} \quad y(t) =
    \bbm{y_1(t) \\ y_2(t)}
  \end{equation*}
  where each of the signals $u_1(\cdot)$, $u_2(\cdot)$, $y_1(\cdot)$,
  and $y_2(\cdot)$ are column vector valued of the same dimension $m/2$.
\end{sta}

The behaviour described by Eq.~\eqref{eq:systemDyn} can be illustrated
in terms of \emph{system diagrams} shown in
Fig.~\ref{fig:SystemDiagram}. Each of the four signals $u_1(\cdot)$,
$u_2(\cdot)$, $y_1(\cdot)$, and $y_2(\cdot)$ in these diagrams has two
mathematical properties: a signal is either \textrm{(i)} \emph{input}
or \emph{output} signal, and either \textrm{(ii)} \emph{top} or
\emph{bottom} signal. That a signal is input is indicated by the arrow
pointing to the frame in diagram.  Otherwise, the signal is output of
the system. That a signal is top is indicated by drawing it to top row
of the diagram. We say that the diagram is in \emph{standard form}
when the signals $u_1(\cdot)$, $u_2(\cdot)$, $y_1(\cdot)$, and
$y_2(\cdot)$ are in the same relative positions as they appear in
their dynamical Eqs.~\eqref{eq:systemDyn}. The diagrams in
Fig.~\ref{fig:SystemDiagram} describe the same dynamical system, and
the top left panel describes it in standard form. The following
\emph{coupling rule} is a restriction for coupling in system diagrams:
two inputs or two outputs cannot be coupled.\footnote{In
  Section~\ref{RedhefferSec} the \emph{colour} (in fact, red or blue)
  of signals is also introduced together with the \emph{colour
    rule}. Colour is a semantic property that describes the underlying
  physics of the realisations.}

The system diagrams do not assume even the linearity of the underlying
dynamical systems. Extremely complicated networks of dynamical systems
can be described in terms of system diagrams ; see, e.g.,
Fig~\ref{fig:RedhefferSystem}.  The parallel and cascade connections
of Definition~\ref{BasicSystemOpsDef} are shown in
Fig.~\ref{fig:SystemDiagramPasCas}.

\begin{figure}[h]
  \centering
  \raisebox{-0.5\height}{\includegraphics[scale=1.3]{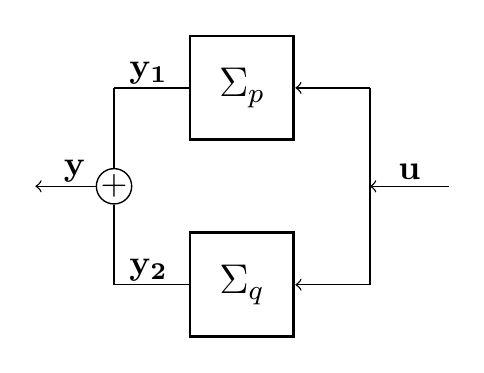}}
  \raisebox{-0.5\height}{\includegraphics[scale=1.3]{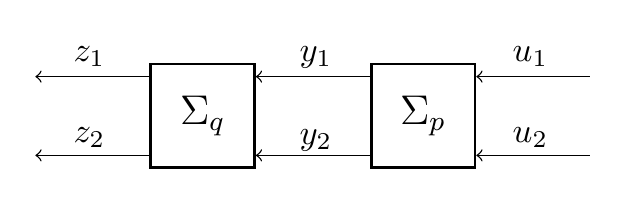}}
  \caption{Parallel and cascade connections in terms of system
    diagrams drawn in standard form. The resulting realisations are
    denoted by $\Sigma_p + \Sigma_q$ and $\Sigma_p * \Sigma_q$ with
    formulas given in Definition~\ref{BasicSystemOpsDef}.  }
  \label{fig:SystemDiagramPasCas}
\end{figure}

\subsection{\label{FundamentalSec} Fundamental operations of realisations}

\begin{figure}[h]
  \centering
  \includegraphics[scale=1.5]{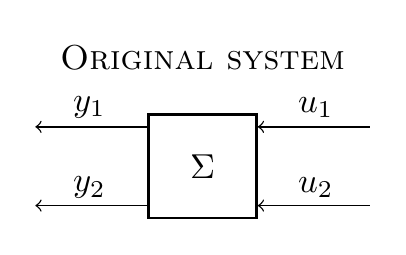}
  \includegraphics[scale=1.5]{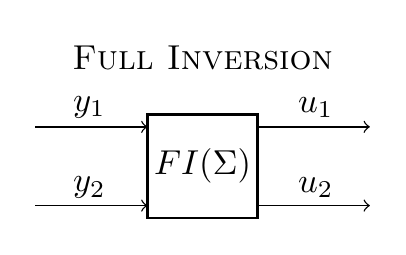}
  \includegraphics[scale=1.2]{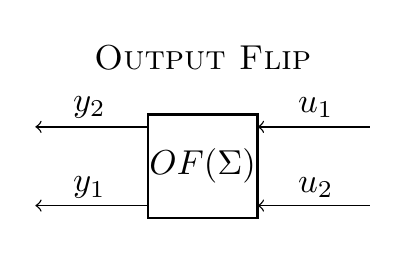}
  \includegraphics[scale=1.2]{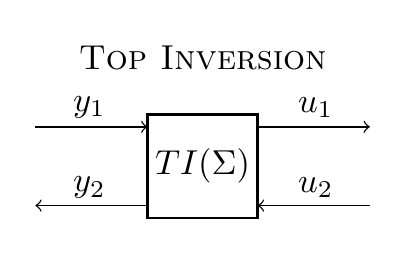}
  \includegraphics[scale=1.2]{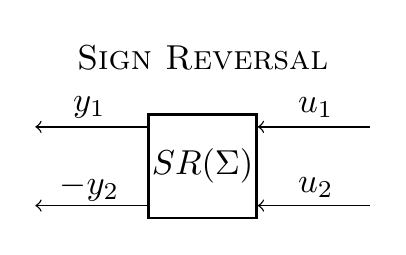}
  \caption[Transformed systems]{The representations of the three basic
    transformations of the original system $\Sigma = \sbm{A&B \\ C&
      D}$ given in its standard diagrammatic form (top left). The
    mathematical relations between the signals $u_1, u_2, y_1$, and
    $y_2$ in all of these systems $\Sigma$, $\OF(\Sigma)$,
    $\TI(\Sigma)$, and $\SR(\Sigma)$ are \emph{by definition} the same.}
  \label{fig:SystemTransformationDiagram}
\end{figure}

It appears that all couplings and feedback configurations required in
this article are combinations of four elementary transformations of
the original system $\Sigma = \sbm{A&B \\ C& D}$ described
diagrammatically in Fig.~\ref{fig:SystemTransformationDiagram}. In
terms of the state space representation of the linear system $\Sigma$,
these transformations are as follows:
\begin{enumerate}
\item {\bf Full Inversion ($\FI$)}
     \begin{equation}
      \label{eq:invsystem}
    \bbm{A&B \\ C& D} = \Sigma \mapsto \FI(\Sigma) := \bbm{A-BD^{-1}C & BD^{-1}
      \\ -D^{-1}C & D^{-1}}.
     \end{equation}
\item {\bf Output Flip ($\OF$)}
    \[
    \bbm{A&B \\ \sbm{C_1 \\ C_2} & \sbm{D_1\\ D_2}} = \Sigma \mapsto \OF(\Sigma) := \bbm{A& B
      \\ \sbm{0&I\\I&0 } \sbm{C_1 \\ C_2} & \sbm{0&I\\I&0 } \sbm{D_1\\ D_2}}.
  \]
\item {\bf Top Inversion ($\TI$)}

  \[
    \bbm{A & \sbm{B_{1} & B_{2}} \\ \sbm{C_1 \\ C_2} & \sbm{D_{{11}} & D_{{12}} \\
        D_{{21}} & D_{{22}}} }  = \Sigma \mapsto \TI(\Sigma) :=
    \bbm{A-B_{1}D_{{11}}^{-1}C_{1} & \sbm{-B_{1}D_{{11}}^{-1}  & B_2 - B_{1}D_{{11}}^{-1}D_{12}}\\
      \sbm{-D_{{11}}^{-1}C_{1} \\C_{2}-D_{{21}}D_{{1}}^{-1}C_{1}} &\sbm{-D_{{11}}^{-1}& -D_{{11}}^{-1} D_{{12}} \\
        -D_{{12}}D_{{11}}^{-1}&D_{{22}}- D_{{21}}D_{{11}}^{-1}D_{{12}}}}.
  \]

\item{{\bf Sign Reversal ($\SR$)}}
    \[
    \bbm{A&B \\ \sbm{C_1 \\ C_2} & \sbm{D_1\\ D_2}}
    = \Sigma  \mapsto \SR(\Sigma) := \bbm{A& B \\ \sbm{I& 0\\0 & -I } \sbm{C_1 \\ C_2} & \sbm{I & 0\\0 & -I } \sbm{D_1\\ D_2}}.
  \]
\end{enumerate}
The realisation formula for $\FI(\Sigma)$ is called \emph{external
  reciprocal transformation} in Section~\ref{ERTSec}. It is the only
one of the four transformations that does not require the splitting of
inputs to components $u_1, u_2$ or outputs to $y_1, y_2$. Observe that
both $\FI(\Sigma)$ and $\TI(\Sigma)$ are only defined for $\Sigma$ for
which the result is well-defined.

The compositions of these operations on realisations are denoted by
$\circ$.  Obviously, $\Sigma = \OF(\OF(\Sigma)) = \TI(\TI(\Sigma)) =
\SR(\SR(\Sigma))$; i.e., $\OF^{-1} = \OF$, $\TI^{-1} = \TI$, and $\SR^{-1} =
\SR$.  Two further similar operations can be defined in terms of these,
namely the \emph{Bottom Inversion} defined as $\BI := \TI \circ
\FI = \FI \circ \TI$ whose realisation formula is plainly
\begin{equation*}
  \BI(\Sigma) =  \bbm{A-B_{2}D_{{22}}^{-1}C_{2}  & \bbm{B_{1}-B_{2}D_{{22}}^{-1}D_{{21}}  & -B_{2}D_{{22}}^{-1}} \\
                     \bbm{C_{1}-D_{{21}}D_{{22}}^{-1}C_{2} \\ -D_{{22}}^{-1}C_{2} } &
                     \bbm{D_{{11}}- D_{{12}}D_{{22}}^{-1}D_{{21}} & -D_{{12}}D_{{22}}^{-1} \\ -D_{{22}}^{-1}D_{{21}} &-D_{{22}}^{-1}} \\
}.
\end{equation*}
Clearly, $\FI = \TI \circ \BI = \BI \circ \TI$.  The \emph{Input Flip}
given by $\IF := \FI \circ \OF \circ \FI$ has the realisation formula
\begin{equation*}
    \bbm{A&B \\ C& D} = \Sigma  \mapsto \IF(\Sigma) := \bbm{A& \sbm{B_1 & B_2} \sbm{0&I\\I&0 } \\ C& \sbm{D_1 & D_2} \sbm{0&I\\I&0 }}.
\end{equation*}

\section{\label{TransformSec} Transformations of realisations}

\subsection{\label{InternalCayleySec} Internal Cayley and reciprocal transformations}

Let $\Sigma = \sbm{A & B \\ C & D}$ be any continuous time system. We
define for any $\sigma > 0$ the matrices
\begin{equation} \label{eq:IntCayleyDef}
\begin{aligned}
  A_\sigma & := \left (\sigma + A \right )\left (\sigma - A \right )^{-1}, \quad 
  B_\sigma  := \sqrt{2 \sigma} \left (\sigma - A \right )^{-1} B, \\
  C_\sigma & :=  \sqrt{2 \sigma} \left (\sigma - A \right )^{-1} C, \quad 
  D_\sigma  :=  D + C \left (\sigma - A \right )^{-1} B = \CTF_{\Sigma}(\sigma)
\end{aligned}
\end{equation}
where $\CTF_{\Sigma}(\cdot)$ is the transfer function of $\Sigma$.
The discrete time system $\phi_\sigma = \spm{A_\sigma & B_\sigma \\ C_\sigma & D_\sigma}$
obtained this way is known as the \emph{internal Cayley transform} of
$\Sigma$. The discrete time transfer function of $\phi_\sigma$ is given by
$\DTF_\sigma(z) = D_\sigma + z C_\sigma\left (I + z A_\sigma \right )^{-1} B_\sigma$, and we
have the correspondence
\begin{equation} \label{eq:IntCayleyTF}
  \DTF_\sigma(z) = \CTF_\Sigma \left ( \sigma\frac{1 - z}{1 + z} \right )
\quad \text{for} \quad z \in \D.
\end{equation}
Conversely, if $-1 \notin \sigma(A_\sigma)$, we have
\begin{equation*}
\begin{aligned}
  A & = - \sigma \left (I + A_\sigma \right )^{-1} \left (I - A_\sigma \right ) , \quad B = \sqrt{2 \sigma} \left (I + A_\sigma \right )^{-1} B_\sigma, \\
  C & = \sqrt{2 \sigma} C_\sigma  \left (I + A_\sigma \right )^{-1}, \quad D = D_\sigma - C_\sigma (I + A_\sigma)^{-1} B_\sigma
\end{aligned}
\end{equation*}
since $(\sigma - A)^{-1} = \frac{1}{2 \sigma} (I + A_\sigma)$.  The
internal Cayley transform can be interpreted as the Crank--Nicolson
time discretisation scheme (also known as Tustin's method) as
explained in \cite{H-M:CTATDS} or as a spectral discretisation method
as explained in \cite{A-G:MSIVPLDEIH,G-M:CTSIVPFODE}; see also
\cite[Section~12.3]{OS:WPLS}. That this time discretisation scheme
respects passivity was already indicated in
Proposition~\ref{ScatteringPassiveProp}.

It remains to mention the \emph{internal reciprocal system} $\Sigma_-
= \sbm{A_- & B_- \\ C_- & D_-}$ of $\Sigma = \sbm{A & B \\ C & D}$.
If the main operator $A$ is invertible, $\Sigma_-$ is defined by
\begin{equation} \label{eq:InternalReciprocals}
\begin{aligned}
  A_- & := A^{-1}, \quad B_- := A^{-1}B,  C_- := -CA^{-1}, \quad \\
  D_- & := \CTF_\Sigma(0) = D - CA^{-1}B.
\end{aligned}
\end{equation}
Obviously, $\left ( \Sigma_-\right )_- = \Sigma$ and
$\CTF_{\Sigma_-}(s) = \CTF_{\Sigma} \left (1 / s \right )$.  The
reciprocal system is studied
in~\cite{Curtain:RLSTR:2003},\cite[Section~12.4]{OS:WPLS}, and it is
useful for interchanging high and low frequency contributions in
system responses when carrying out dimension reduction based on a
desired frequency passband.

\subsection{\label{ExtCayleySubSec} External transformations}

Four fundamental operations on state space realisations $\Sigma$ were
introduced in Section~\ref{FundamentalSec}. Three further combinations
of these operations have an essential role in feedbacks of linear
dynamical systems. We proceed to introduce them next, and we also
discuss further the Full Inversion transformation in
Section~\ref{ERTSec}.

Applications produce two variants of linear systems $\Sigma$ that
impose different kinds of restriction on couplings of signals:
\textrm{(i)} systems whose signals have two different the
\emph{physical dimensions}, and \textrm{(ii)} systems whose signals
have the same physical dimensions but different \emph{physical
  directions}. Systems of the first kind have transfer functions that
typically represent acoustical or electric impedances or
admittances. The systems of the second kind transfer energy through
their inputs and outputs in three-dimensional space. Both kinds of
mathematical systems may be used to describe the same physical
configuration but requirements due to, e.g., measurement and
instrumentation make different descriptions more preferable.

From now on, we add a purely semantic property to signals in system
diagrams: the \emph{colour} which is either red or blue.  System
diagrams are always required to satisfy the following \emph{colour
  rule}: two signals of different colour cannot be coupled. Depending
on the context, the colour of a signal may either refer to the
physical dimension or the direction of energy flow in the underlying
physics. The colour rule helps keeping track of the underlying physics
in the Redheffer star products in Section~\ref{RedhefferSec} even though
mathematics itself is ``colour blind''.

\begin{figure}[h]
  \centering
  \includegraphics[scale=1.5]{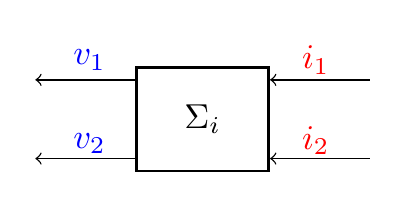}
  \includegraphics[scale=1.5]{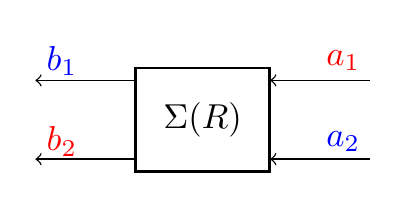}
  \caption{An impedance system with two inputs and two outputs
    (left). A scattering system $\Sigma(R)$ constructed of $\Sigma_i$
    with the resistance matrix $R$ (right). The relation between
    signals is given in Eq.~\eqref{eq:ImpScatteringSigs}.  Colour of
    the signals in left panel refers to the physical dimension and in
    the right channel to direction of the energy flow.}
  \label{fig:OrigSystem}
\end{figure}

\subsubsection{\label{ExtCayleySec} External Cayley transformations}

Let $\Sigma_i = \sbm{A_i & B_i \\ C_i & D_i}$ be a system whose input
and output spaces are $m$-dimensional.  Moreover, let $R := \sbm{ R_1
  & 0 \\ 0 & R_2}$ be positive, invertible, $m \times m$
\emph{resistance matrix}. Define now the matrices
\begin{equation}
  \label{eq:imp2scat}
  \begin{aligned}
    A & := A_i - B_i \left (D_i + R \right )^{-1} C_i, \quad
    B := \sqrt{2} B_i \left (D_i    + R \right )^{-1} R^{1/2} \\
    C & := \sqrt{2} R^{1/2} \left (D_i + R \right )^{-1} C_i, \quad
    D :=  I - 2 R^{1/2} \left ( D_i + R \right )^{-1} R^{1/2},
\end{aligned}
\end{equation}
comprising the system $\Sigma(R) := \sbm{A & B \\ C & D}$ where it is
assumed that $D_i + R$ is invertible; see Proposition~\ref{AnyROKProp}
below. Conversely, we have
\begin{equation}
  \label{eq:scat2imp}
  \begin{aligned}
    A_i & = A + B (I-D)^{-1}C, \quad
    B_i = \sqrt{2} B \left(I-D\right)^{-1} R^{1/2} \\
    C_i &= \sqrt{2} R^{1/2} \left(I-D\right)^{-1} C, \quad
    D_i  = R^{1/2} \left (I-D\right )^{-1} \left (I+D \right ) R^{1/2}.
  \end{aligned}
\end{equation}
For reasons explained in Section~\ref{CharSec}, we call $\Sigma_i$ and
$\Sigma(R)$ the \emph{impedance system} and \emph{scattering system with
  coupling channel resistance} $R$, respectively.  Observe that the
discretisation parameter $\sigma > 0$ in Eq.~\eqref{eq:IntCayleyDef} for
the internal Cayley transform and the resistance matrix $R > 0$ for
the external Cayley transform play somewhat analogous roles.

Any choice of $R > 0$ is acceptable for an impedance passive system
even though some values of $R$ are more desirable than others:
\begin{prop} \label{AnyROKProp}
  If $\Sigma_i = \sbm{A_i & B_i \\ C_i & D_i}$ is an impedance passive
  system, then $\Sigma(R)$ defined by Eq.~\eqref{eq:imp2scat} exists
  for all invertible $R > 0$. Moreover, the feedthrough matrix $D$ of
  $\Sigma(R)$ satisfies $1 \notin \sigma(D)$.
\end{prop}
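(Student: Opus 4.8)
The plan is to work from the impedance passivity LMI of Proposition~\ref{ImpPassiveProp}, which gives in particular $D_i + D_i^T \geq 0$ as the $(2,2)$-block of the negative-semidefinite matrix $\sbm{A_i^T + A_i & B_i - C_i^T \\ B_i^T - C_i & -D_i^T - D_i} \leq 0$. First I would establish invertibility of $D_i + R$ for every invertible $R > 0$. Write $D_i + R = (D_{i,s} + R) + D_{i,a}$ where $D_{i,s} = \tfrac12(D_i + D_i^T) \geq 0$ is the symmetric part and $D_{i,a} = \tfrac12(D_i - D_i^T)$ the skew-symmetric part. Since $R > 0$ is symmetric and $D_{i,s} \geq 0$, the symmetric part $D_{i,s} + R$ of $D_i + R$ is strictly positive definite, hence invertible; a matrix whose symmetric part is positive definite is invertible (if $(D_i + R)v = 0$ for $v \neq 0$, then $0 = \Re\,v^T(D_i+R)v = v^T(D_{i,s}+R)v > 0$, a contradiction). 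This proves the existence claim, so $\Sigma(R)$ is well-defined by Eq.~\eqref{eq:imp2scat}.

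Next I would show $1 \notin \sigma(D)$ for the feedthrough matrix $D = I - 2R^{1/2}(D_i + R)^{-1} R^{1/2}$. Note $I - D = 2 R^{1/2}(D_i + R)^{-1} R^{1/2}$, so $1 \in \sigma(D)$ exactly when $I - D$ is singular, i.e.\ when $R^{1/2}(D_i + R)^{-1} R^{1/2}$ is singular. But $R^{1/2}$ is invertible (as $R > 0$ is invertible) and $(D_i + R)^{-1}$ is invertible by the previous paragraph, so their product is invertible; hence $I - D$ is invertible and $1 \notin \sigma(D)$. Equivalently, one could observe that $D_i = R^{1/2}(I - D)^{-1}(I + D)R^{1/2}$ from Eq.~\eqref{eq:scat2imp} only makes sense when $I - D$ is invertible, and consistency of the two formulas forces this.

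I do not anticipate a genuine obstacle here: the proposition is essentially an exercise in the fact that adding a positive-definite symmetric matrix to a matrix with positive-semidefinite symmetric part yields an invertible matrix. The only point requiring a little care is extracting $D_i + D_i^T \geq 0$ cleanly from the full $(n+m)\times(n+m)$ LMI of Proposition~\ref{ImpPassiveProp} --- this follows because any principal submatrix (here the lower-right $m \times m$ block $-D_i^T - D_i$) of a negative-semidefinite matrix is itself negative-semidefinite. With that in hand, both claims are immediate from the invertibility criterion for matrices with positive-definite symmetric part.
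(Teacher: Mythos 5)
Your proposal is correct and follows essentially the same route as the paper: both arguments reduce invertibility of $D_i + R$ to strict positivity of the quadratic form $\Re\ipd{(D_i+R)u}{u} = \ipd{Ru}{u} + \tfrac12\ipd{(D_i+D_i^T)u}{u} > 0$ for $u \neq 0$ (using $D_i + D_i^T \geq 0$ from Proposition~\ref{ImpPassiveProp}), and both obtain $1 \notin \sigma(D)$ by reading off that $I - D = 2R^{1/2}(D_i+R)^{-1}R^{1/2}$ is a product of invertible matrices. Your explicit symmetric/skew-symmetric decomposition is just a slightly more verbose packaging of the same computation.
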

\noindent There exists an impedance conservative $\Sigma_i$ for which
$-1 \in \sigma(D)$ since $D_i = 0$ is possible, and in some physically
motivated applications such as Example~\ref{piTopologyExample} it is
even typical.
\begin{proof}
  We have for all $m$-vectors $u$
\begin{equation*}
\begin{aligned}
  & 2 \Re \left < (R + D_i)u, u \right > = \left < (R + D_i)u, u \right > + \left < u, (R + D_i) u \right > \\
 & = 2 \left <R u , u \right > + \left < (D_i + D_i^T) u,  u \right > \geq 2 \left <R u , u \right > > 0
\end{aligned}
\end{equation*}
since $D_i + D_i^T \geq 0$ by Proposition~\ref{ImpPassiveProp}. Thus
$R + D_i$ is an invertible $m \times m$ matrix, and the first claim
follows. That $1 \notin \sigma(D)$ follows from the last equation in
Eq.~\eqref{eq:imp2scat}.
\end{proof}

Denoting the input and output signals of $\Sigma_i$ and $\Sigma(R)$ in
their dynamical equations (analogously with Eqs.~\eqref{eq:systemDyn})
by $\sbm{i_1 \\ i_2}$, $\sbm{v_1 \\ v_2}$, $\sbm{a_1 \\ a_2}$,
$\sbm{b_1 \\ b_2}$, respectively, we have the relations
\begin{equation} \label{eq:ImpScatteringSigs}
   \bbm{a_1 \\ a_2} = \frac{R^{-1/2}}{\sqrt{2}}  \bbm{v_1 + R_1 i_1 \\ v_2 + R_2 i_2}
  \quad \text{and} \quad \\
  \bbm{b_1 \\ b_2} = \frac{R^{-1/2}}{\sqrt{2}}  \bbm{v_1 - R_1 i_1 \\ v_2 - R_2 i_2}.
\end{equation}
If the physical dimension of $i_1$ is current and $v_1$ is voltage,
then the dimension of $v_1 + R_1 i_1$ is voltage. It follows, for
example, that $\left | a_1 \right |^2 = \tfrac{1}{2 R_1} \left | v_1 +
R_1 i_1 \right |^2$, and its dimension is thus power. The same holds
for the all other signals $a_2, b_1, b_2$ of the scattering system
$\Sigma(R)$.

In practice, both impedance and scattering measurements are used for
passive circuits. Direct impedance measurements are impractical for,
e.g, high frequency work often carried out using Vector Network
Analysers (see, e.g., \cite{Anderson:1997:SPD},
\cite[Section~12]{Orfanidis:EWA:2016}) that are based on scattering
parameters instead of voltages and currents. The external Cayley
transformation with resistance matrix $R$ is plainly a translation of
these frameworks in state space. Scattering systems $\Sigma(R)$ are
also directly eligible for Redheffer star products introduced in
Section~\ref{RedhefferSec}.

\subsubsection{\label{ERTSec} External reciprocal transformation}

The flow inverted system $\Sigma_f = \sbm{A_f & B_f \\ C_f & D_f}$ of
$\Sigma = \sbm{A & B \\ C & D}$ is obtained by $\Sigma_f = \FI(\Sigma)$
as introduced in Section~\ref{FundamentalSec}. It follows that
\begin{equation*}
  \CTF_{\Sigma_f}(s) = \CTF_{\Sigma}(s)^{-1}\,\text{for } s \notin
  \sigma(A-BD^{-1}C),
\end{equation*}
and we call $\Sigma_f$ the \emph{external reciprocal transform} of
$\Sigma$.  This transformation is possible if and only if the
feedthrough $D$ or, equivalently, $D_f$ is an invertible matrix.
Indeed, we observe that
\begin{equation}
  \label{eq:invsystemDyn}
\begin{cases}
    x'(t) & = A_f x(t) +  B_f y(t), \\
    u(t)  & = C_f x(t) +   D_f y(t)
\end{cases}
\end{equation}
holds if and only if Eq.~\eqref{eq:systemDyn} holds.  If the transfer
function $\CTF_{\Sigma}(s)$ is an impedance of a passive circuit, then
$\CTF_{\Sigma_f}(s)$ is the admittance of the same circuit. From
purely mathematical systems theory point of view, the impedance and
admittance are completely analogous concepts. However, for some
circuits, either impedance, or admittance, or even both of them may
not be realisable by a finite-dimensional system which is a
restriction on how one should write the modelling equations.

We conclude that the external Cayley and the reciprocal
transformations connect systems of scattering, impedance, and
admittance type without further restrictions whenever a technical
assumption concerning the feedthrough matrix holds:
\begin{prop} \label{FInvProp}
  Let $\Sigma = \sbm{A & B \\ C & D}$ be a linear system and $R > 0$
  be an invertible matrix. Then the following holds:
\begin{enumerate}
  \item The (impedance) system $\Sigma_i = \sbm{A_i & B_i \\ C_i &
    D_i}$ given by Eq.~\eqref{eq:scat2imp} and its external reciprocal
    transform, the (admittance) system $\left (\Sigma_i \right )_f$
    exist if and only if $\pm 1 \notin \sigma(D)$.
  \item Defining the (scattering) system $\Sigma(R)$ in terms of
    $\Sigma_i$ and Eqs.~\eqref{eq:imp2scat}, we have $\Sigma(R) =
    \Sigma$.
    \item The matrix $D$ is block diagonal in the same way as $R$ if
      and only if the matrix $D_i$ is block diagonal in the same way
      as $R$.
\end{enumerate}
\end{prop}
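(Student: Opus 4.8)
The plan is to verify the three claims essentially by algebraic bookkeeping on the formulas in Eqs.~\eqref{eq:imp2scat} and~\eqref{eq:scat2imp}, using Proposition~\ref{AnyROKProp} to handle the existence conditions. For claim~(i), the system $\Sigma_i$ is defined from $\Sigma$ by Eq.~\eqref{eq:scat2imp}, and inspection of those formulas shows that every term is well-defined exactly when $I - D$ is invertible, i.e., $1 \notin \sigma(D)$. The external reciprocal transform $(\Sigma_i)_f = \FI(\Sigma_i)$ additionally requires $D_i$ to be invertible; from the last line of Eq.~\eqref{eq:scat2imp}, $D_i = R^{1/2}(I-D)^{-1}(I+D)R^{1/2}$, and since $R^{1/2}$ and $(I-D)^{-1}$ are already invertible, $D_i$ is invertible if and only if $I + D$ is, i.e., $-1 \notin \sigma(D)$. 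Combining, $(\Sigma_i)_f$ exists if and only if $\pm 1 \notin \sigma(D)$.

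For claim~(ii), the natural route is to observe that Eqs.~\eqref{eq:imp2scat} and~\eqref{eq:scat2imp} are genuine mutual inverses: one builds $\Sigma_i$ from $\Sigma$ via Eq.~\eqref{eq:scat2imp}, then builds $\Sigma(R)$ from $\Sigma_i$ via Eq.~\eqref{eq:imp2scat}, and must check $\Sigma(R) = \Sigma$ entry by entry. The feedthrough identity is the cleanest: substituting $D_i = R^{1/2}(I-D)^{-1}(I+D)R^{1/2}$ into $D_i + R = R^{1/2}\left[(I-D)^{-1}(I+D) + I\right]R^{1/2} = R^{1/2}(I-D)^{-1}\left[(I+D)+(I-D)\right]R^{1/2} = 2R^{1/2}(I-D)^{-1}R^{1/2}$, so $(D_i+R)^{-1} = \tfrac{1}{2}R^{-1/2}(I-D)R^{-1/2}$, and then $I - 2R^{1/2}(D_i+R)^{-1}R^{1/2} = I - (I-D) = D$, as required; note this also reconfirms invertibility of $D_i + R$ whenever $1 \notin \sigma(D)$, consistent with Proposition~\ref{AnyROKProp}. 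The same $(D_i+R)^{-1}$ formula, fed into the expressions for $B$ and $C$ in Eq.~\eqref{eq:imp2scat} together with the $B_i, C_i$ of Eq.~\eqref{eq:scat2imp}, collapses the $\sqrt{2}$ and $R^{1/2}$ factors and recovers $B$ and $C$; finally $A_i - B_i(D_i+R)^{-1}C_i$ reduces to $A$ after substituting $A_i = A + B(I-D)^{-1}C$ and cancelling. (Alternatively, one can invoke the transfer-function / uniqueness argument: both sides realise the same rational function and the external Cayley transform is a fixed Möbius change of variables, but the direct substitution is short enough to just carry out.)

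For claim~(iii), the block structure $R = \sbm{R_1 & 0 \\ 0 & R_2}$ means $R^{1/2}$ and $R^{-1/2}$ are block diagonal in the same pattern, and conjugation by a block-diagonal invertible matrix preserves block-diagonality. If $D$ is block diagonal in this pattern, then so is $(I-D)^{-1}$ and hence $D_i = R^{1/2}(I-D)^{-1}(I+D)R^{1/2}$ by Eq.~\eqref{eq:scat2imp}. Conversely, if $D_i$ is block diagonal, the formula $D = I - 2R^{1/2}(D_i+R)^{-1}R^{1/2}$ from Eq.~\eqref{eq:imp2scat} (valid since $\Sigma(R) = \Sigma$ by claim~(ii)) shows $D$ is block diagonal too, because $D_i + R$ is then block diagonal and so is its inverse.

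I do not expect a genuine obstacle here; the only mild care needed is getting the $\sqrt{2}$ normalisations and the direction of the $R^{1/2}$ conjugations straight in claim~(ii), and making sure in claim~(i) that the existence conditions for $\Sigma_i$ (namely $1\notin\sigma(D)$) and for its flip (additionally $-1\notin\sigma(D)$) are correctly separated. The computation $D_i + R = 2R^{1/2}(I-D)^{-1}R^{1/2}$ is the linchpin that makes all three parts fall out cleanly.
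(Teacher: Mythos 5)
Your proposal is correct and is essentially the paper's argument in expanded form: the paper dispenses with the proof by saying it ``follows by inspection of Eq.~\eqref{eq:scat2imp} and the definition of $\Sigma_f$,'' and your computations (in particular the identity $D_i + R = 2R^{1/2}(I-D)^{-1}R^{1/2}$ and the resulting cancellations for $A$, $B$, $C$, $D$) are precisely that inspection carried out explicitly. All three claims are verified correctly, including the separation of the condition $1 \notin \sigma(D)$ for the existence of $\Sigma_i$ from the additional condition $-1 \notin \sigma(D)$ for the invertibility of $D_i$.
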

\noindent This follows by inspection of Eq.~\eqref{eq:scat2imp} and
the definition of $\Sigma_f$.

\subsubsection{\label{HybridSec} Hybrid transformation}


The two examples given in Section~\ref{ApplicationsSec} use the
external Cayley transformation to produce physically and
mathematically realistic Redheffer products. There is yet another way
of transforming an impedance passive system $\Sigma_i = \sbm{A_i & B_i
  \\ C_i & D_i}$ into the \emph{hybrid system} $\Sigma_h$ (see
\cite[Example~4.1]{HK:CSAH}) so as to make the Redheffer product of
two such (otherwise compatible) systems a physically realistic
feedback connection.  The corresponding operation for realisations is
called the \emph{hybrid transformation}, which we now introduce for
the sake of completeness.  The hybrid transformation is illustrated in
Fig.~\ref{fig:HybridSystem} in terms of an impedance system
\begin{equation} \label{ImpedanceSystemSplittedEq}
  \Sigma_i  =
  \bbm{A_i & \bbm{B_{i_1} & B_{i_2}} \\ \bbm{C_{i_1} \\ C_{i_2} } &   \bbm{D_{i_{11}} & D_{i_{12}} \\
      D_{i_{21}} & D_{i_{22}} }}
\end{equation}
associated to differential equations
\begin{equation} \label{eq:systemDynImp}
   \begin{cases}
     x'(t) & = A_i x(t) + B_i u(t), \\ y(t) & = C_i x(t) + D_i u(t)
   \end{cases}
\end{equation}
with the (current) input signal $u(t) = \bbm{i_1(t) & i_2(t)}^T$ and
the (voltage) output signal $y(t) = \bbm{v_1(t) & v_2(t)}^T$.
\begin{figure}[h]
  \centering
  \includegraphics[scale=1.5]{system_impedance.pdf}
  \includegraphics[scale=1.5]{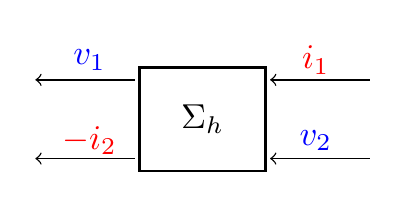}
   \includegraphics[scale=0.7]{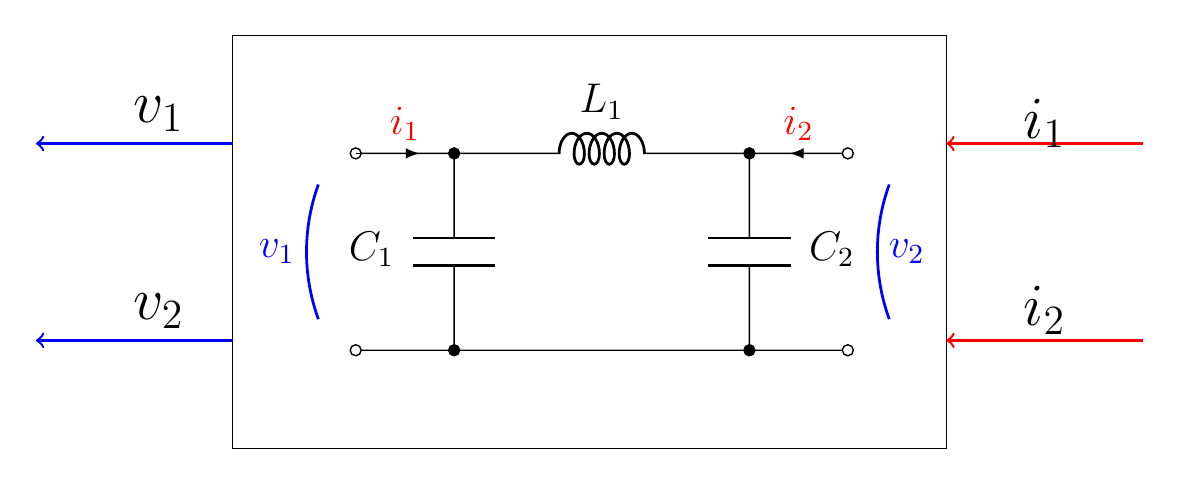}
  \caption{An impedance system $\Sigma_i$ (left top panel) and its
    hybrid transform $\Sigma_h$ (right top panel) in diagrammatic
    form. A $\pi$-topology circuit defining an impedance passive
    system to be used in Section~\ref{ButterworthSec} (bottom
    panel). In all of these diagrams, the relations between $i_1, i_2,
    v_1,$ and $v_2$ are the same. The positive directions of currents
    in the \emph{circuit diagram} point always into the circuit,
    following the convention of Ohm's law and contrary to the one
    followed in \cite{Orfanidis:EWA:2016}. When chaining such circuits
    together using the Redheffer star product, the directions of the
    currents in couplings must be compatible with Kirchhoff's
    laws. This is achieved by including a sign reversal for $i_2$ in
    the hybrid transformation. Recall that the signal arrows in
    \emph{system diagrams} separate system inputs from outputs, and
    they are not related to directions of currents in circuit
    diagrams.}
  \label{fig:HybridSystem}
\end{figure}

To compute the realisation for $\Sigma_h$ in terms of $\Sigma_i$, the
new output component $-i_2$ is solved from
Eqs.~\eqref{eq:systemDynImp} which is possible if and only if
$D_{i_{22}}$ is invertible.  Then the original output component $v_2$
becomes an input component. Straightforward computations lead to the
realisation
\begin{equation}
  \label{eq:hybrid}
    \Sigma_h = 
  \bbm{A_i-B_{i_2}D_{i_{22}}^{-1}C_{i_2} &
    \bbm{B_{i_1}-B_{i_2}D_{i_{22}}^{-1}D_{i_{21}}  & B_{i_2}D_{i_{22}}^{-1}} \\
    \bbm{C_{i_1}-D_{i_{21}}D_{i_{22}}^{-1}C_{i_2} \\ D_{i_{22}}^{-1}C_{i_2}} &
    \bbm{D_{i_{11}}- D_{i_{12}}D_{i_{22}}^{-1}D_{i_{21}} & D_{i_{12}}D_{i_{22}}^{-1}\\D_{i_{22}}^{-1}D_{i_{21}} &-D_{i_{22}}^{-1} }}.
\end{equation}
Recall that the external reciprocal transform $\Sigma_f$ of $\Sigma_i$
is the full flow inversion given by Eq.~\eqref{eq:invsystem}, and the
transfer function of $\Sigma_f$ models circuit admittance if
$\Sigma_i$ is a model for impedance.  The hybrid transformation is a
partial flow inversion with an extra sign reversal. We leave it to the
reader to derive the realisation formula for the inverse hybrid
transformation $\Sigma_h \mapsto \Sigma_i$.

Given $\Sigma_i$, both the external Cayley transform $\Sigma(R)$ for
$R > 0$ and the hybrid transform $\Sigma_h$ can, at least in
principle, be used for computing Redheffer star products of two
systems. It remains to compare these two approaches.  Compared to the
external Cayley transformation, the benefit of the hybrid
transformation is that a resistance matrix $R$ is not required. The
invertibility requirement of $D_{i_{22}}$ is quite severe in
physically realistic systems whereas any impedance passive $\Sigma_i$
has an external Cayley transform $\Sigma(R)$ for any suitable
resistance block matrix $R > 0$.  In fact, the hybrid transformation
is unusable for all impedance conservative \emph{real}
finite-dimensional systems with two-dimensional signals as shown in
Example~\ref{CuteCounterExample}. Hence, the hybrid transformation is
not further developed in this article apart from a few notes. 

\subsubsection{\label{sec:chain-scattering} Chain transformation}

It remains to introduce the last transformation of realisations,
namely the chain transformation that is introduced in
\cite[Eq.~(4.19)~in~Section~4.2]{HK:CSAH} for solving the $H^{\infty}$
control problem. The benefit of the chain transformation is that
the rather complicated Redheffer star product can be represented as
the simple cascade product (see Definition~\ref{BasicSystemOpsDef}) of
chain transforms as shown in Theorem~\ref{RedhefferCascadeThm}.  We
plainly define
\begin{equation} \label{ChainScatteringDef}
  \Chain(\Sigma) := \TI(\OF(\Sigma)) \quad \text{for} \quad   \Sigma = \bbm{A & \bbm{B_{1} & B_{2}} \\
    \bbm{C_{1} \\ C_{2}} & \bbm{D_{{11}} & D_{{12}} \\
       D_{{21}} & D_{{22}} \\}}
\end{equation}
whenever the invertibility conditions required by $\Chain(\Sigma)$ are
satisfied. By direct computations, we get 
\begin{equation}
  \label{eq:chain}
  \Chain(\Sigma) = \bbm{A - B_1 D_{21}^{-1} C_2 & \bbm{B_2 - B_1D_{21}^{-1}D_{22} & B_1D_{21}^{-1}}
    \\  \bbm{C_1 - D_{11}D_{21}^{-1}C_2  \\ -D_{21}^{-1}C_2 } &
    \bbm{D_{12}- D_{11}D_{21}^{-1}D_{22} & D_{11}D_{21}^{-1} \\ -D_{21}^{-1}D_{22} & D_{21}^{-1}}},
\end{equation}
and, hence, $\Chain(\Sigma)$ exists if and only if $D_{21}$ is
invertible. Conversely, the inverse operation for chain transformation
satisfies $\Chain^{-1} = \left (\TI \circ \OF \right )^{-1} = \OF^{-1}
\circ \TI^{-1} = \OF \circ \TI$. In terms of realisations, we have
\begin{equation}
  \label{eq:dechain}
  \Chain^{-1}(\Sigma_c) = \bbm{
    A_c - B_{c_2} D_{c_{22}}^{-1} C_{c_2}  & \bbm{B_{c_2} D_{c_{22}}^{-1} & B_{c_1} - B_{c_2}D_{c_{22}}^{-1}D_{c_{21}}} \\
    \bbm{C_{c_1} - D_{c_{12}}D_{c_{22}}^{-1}C_{c_2} \\ -D_{c_{22}}^{-1}C_{c_2}} & \bbm{D_{c_{12}}D_{c_{22}}^{-1} &
      D_{c_{11}}-D_{c_{12}}D_{c_{22}}^{-1}D_{c_{21}} \\
      D_{c_{22}}^{-1} & - D_{c_{22}}^{-1}D_{c_{21}}}}
\end{equation}
for
\begin{equation*}
\Sigma_c = \bbm{A_c & \bbm{B_{c_1} & B_{c_2}} \\
    \bbm{C_{c_1} \\ C_{c_2}} & \bbm{D_{c_{11}} & D_{c_{12}} \\
       D_{c_{21}} & D_{c_{22}} \\}}.
\end{equation*}

As pointed out in Section~\ref{HybridSec}, computing Redheffer
products is meaningful either for the external Cayley transform
$\Sigma(R)$ for $R > 0$ or the hybrid transform $\Sigma_h$ of an
impedance passive $\Sigma_i$ splitted as in
Eq.~\eqref{ImpedanceSystemSplittedEq}. 
\begin{prop} \label{ChainApplicableProp}
Let $\Sigma_i$ given by Eq.~\eqref{ImpedanceSystemSplittedEq} be an
impedance passive system with its external Cayley transform
$\Sigma(R)$ for an invertible $R = \sbm{R_1 & 0 \\ 0 & R_2} >0$.  By
$\Sigma_h$ denote the hybrid transform of $\Sigma_i$ (if it exists).
Then the following holds:
\begin{enumerate}
\item \label{ChainApplicablePropClaim1} $\Chain(\Sigma(R))$ exists if and only if 
$D_{i_{21}}$ is invertible.
\item \label{ChainApplicablePropClaim2} Both $\Sigma_h$ and
  $\Chain(\Sigma_h)$ exist if and only if both $D_{i_{21}}$ and
  $D_{i_{22}}$ are invertible.
\end{enumerate}
\end{prop}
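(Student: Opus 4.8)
The plan is to reduce everything to two facts already on record: by the discussion following Eq.~\eqref{eq:chain}, $\Chain(\Sigma')$ exists if and only if the $(2,1)$-block of the feedthrough matrix of $\Sigma'$ is invertible; and by the derivation of Eq.~\eqref{eq:hybrid}, $\Sigma_h$ exists if and only if $D_{i_{22}}$ is invertible. So the whole proof consists of locating the relevant $(2,1)$-feedthrough-blocks and deciding when they are invertible, which is pure $2\times 2$ block bookkeeping apart from one invertibility check.

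For Claim~\eqref{ChainApplicablePropClaim1} I would set $M := D_i + R$; by Proposition~\ref{AnyROKProp} the matrix $M$ is invertible. Its $(1,1)$-block $D_{i_{11}} + R_1$ is invertible as well, because impedance passivity gives $D_i + D_i^T \geq 0$ by Proposition~\ref{ImpPassiveProp}, hence $D_{i_{11}} + D_{i_{11}}^T \geq 0$ (a principal submatrix of a positive semidefinite matrix), and therefore $(D_{i_{11}}+R_1) + (D_{i_{11}}+R_1)^T \geq 2R_1 > 0$. Since $R = \sbm{R_1 & 0 \\ 0 & R_2}$ is block diagonal, so is $R^{1/2}$, and thus the $(2,1)$-block of the feedthrough $D = I - 2R^{1/2}M^{-1}R^{1/2}$ of $\Sigma(R)$ (see Eq.~\eqref{eq:imp2scat}) equals $-2R_2^{1/2}\,[M^{-1}]_{21}\,R_1^{1/2}$. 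The block-inverse formula applied to $M$ with the invertible pivot $P := D_{i_{11}}+R_1$ then yields $[M^{-1}]_{21} = -(M/P)^{-1}\,D_{i_{21}}\,P^{-1}$, where the Schur complement $M/P := (D_{i_{22}}+R_2) - D_{i_{21}}P^{-1}D_{i_{12}}$ is invertible because both $M$ and $P$ are. As $R_1^{1/2}$, $R_2^{1/2}$, $(M/P)^{-1}$ and $P^{-1}$ are all invertible, the $(2,1)$-block of $D$ is invertible if and only if $D_{i_{21}}$ is; combined with the existence criterion for $\Chain$, this is exactly Claim~\eqref{ChainApplicablePropClaim1}.

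For Claim~\eqref{ChainApplicablePropClaim2} I would argue that if $\Sigma_h$ exists, then $D_{i_{22}}$ is invertible and Eq.~\eqref{eq:hybrid} displays the feedthrough matrix of $\Sigma_h$, whose $(2,1)$-block is $D_{i_{22}}^{-1}D_{i_{21}}$. Since $D_{i_{22}}^{-1}$ is invertible, this block is invertible if and only if $D_{i_{21}}$ is. Hence, combining the existence criteria for $\Sigma_h$ and for $\Chain$, both $\Sigma_h$ and $\Chain(\Sigma_h)$ exist precisely when $D_{i_{22}}$ and $D_{i_{21}}$ are both invertible.

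The only step that is not routine block bookkeeping is, in Claim~\eqref{ChainApplicablePropClaim1}, making sure the pivot $P = D_{i_{11}}+R_1$ used in the block inversion is itself invertible before the Schur-complement identity may be invoked — and this is precisely where impedance passivity (through $D_{i_{11}}+D_{i_{11}}^T\geq 0$) together with $R_1>0$ enters. I expect that to be the only delicate point; an alternative, should one prefer to avoid a pivot, is to invoke Jacobi's identity on complementary minors of $M^{-1}$, which gives $\det [M^{-1}]_{21} = \pm\det D_{i_{21}}/\det M$ directly.
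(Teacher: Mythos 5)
Your proof is correct and follows essentially the same route as the paper's: both arguments reduce $\Chain(\Sigma(R))$ to the invertibility of the $(2,1)$-block of $I-2R^{1/2}(D_i+R)^{-1}R^{1/2}$, use impedance passivity ($D_i+D_i^T\geq 0$ plus $R>0$) to invert a diagonal block of $D_i+R$, and then exhibit that $(2,1)$-block as $D_{i_{21}}$ sandwiched between invertible factors via the block-inverse/Schur-complement formula; the only difference is that you pivot on $D_{i_{11}}+R_1$ where the paper pivots on $D_{i_{22}}+R_2$, which is immaterial. Your treatment of Claim~(ii) simply spells out what the paper dismisses as ``by inspection,'' and is likewise correct.
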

\begin{proof}
Claim~\eqref{ChainApplicablePropClaim1}: The proof is based on the fact that
any block matrix $\sbm{\alpha & \beta \\ \gamma & \delta}$ with square blocks
and invertible $\delta$ satisfies
\begin{equation*}
  \bbm{\alpha & \beta \\ \gamma & \delta}^{-1} 
  = \bbm{ (\alpha - \beta \delta^{-1} \gamma)^{-1}  & -(\alpha - \beta \delta^{-1} \gamma)^{-1} \beta \delta^{-1} \\ 
   -\delta^{-1}  \gamma (\alpha - \beta \delta^{-1}  \gamma)^{-1} & (\delta - \gamma \alpha^{-1}\beta)^{-1} }
\end{equation*}
where $\alpha$ and both the Schur complements $\alpha - \beta \delta^{-1} \gamma$ and
$\delta - \gamma \alpha^{-1}\beta$
are invertible as a consequence.
 
By Eq.~\eqref{eq:imp2scat}, the feedthrough operator of $\Sigma(R)$ is
given by $D = I - 2 R^{1/2} \left ( D_i + R \right )^{-1} R^{1/2}$,
and we are interested in the bottom left block, say $D_{21}$, of it.
Now, $D_{21}$ is invertible if and only if the bottom left block of
\begin{equation} \label{ChainApplicablePropEq1}
R^{1/2} \left ( D_i + R \right )^{-1} R^{1/2} = 
\bbm{R_1^{1/2} & 0 \\ 0 & R_2^{1/2}} \bbm{D_{i_{11}} + R_1 & D_{i_{12}} \\ D_{i_{21}} & D_{i_{22}} + R_2 }^{-1}
\bbm{R_1^{1/2} & 0 \\ 0 & R_2^{1/2}}
\end{equation}
is invertible. Observe that $D_i^T + D_i \geq 0$ by impedance
passivity of $\Sigma_i$, and $R > 0$ is invertible by assumption. The
invertibility of $D_i + R$ follows as in the proof of
Proposition~\ref{AnyROKProp}. Since $D_i^T + D_i \geq 0$, we have
$D_{i_{22}}^T + D_{i_{22}} \geq 0$, and hence also $D_{i_{22}} + R_2$
is invertible. Since both $R_1^{1/2}$ and $R_2^{1/2}$ are positive
invertible matrices, we only need consider the bottom left block of
$(D_i + R)^{-1}$ in Eq.~\eqref{ChainApplicablePropEq1} which is given
by
\begin{equation*}
  - (D_{i_{22}} + R_2)^{-1} D_{i_{21}}(D_{i_{11}} + R_1 - D_{i_{12}}(D_{i_{22}} + R_2)^{-1} D_{i_{21}} )^{-1}
\end{equation*}
using the Schur complements. Its invertibility is equivalent with the invertibility of  
$D_{i_{21}}$ as claimed. 

Claim~\eqref{ChainApplicablePropClaim2} is seen to hold by inspection.
\end{proof}

It is unfortunate that many physically motivated impedance passive
systems have vanishing feedthrough operators $D_i = 0$ making
straightforward chain transformation infeasible. Both of the
applications in Section~\ref{ApplicationsSec} are of this kind.

\section{\label{CharSec} Passivity of the transformed systems}

The external Cayley transformation is impedance/scattering passivity
preserving for any value of the resistance matrix, too.
\begin{prop} \label{ExtCayleyPassivityProp}
Let $\Sigma(R) = \sbm{ A &B \\ C & D}$ and $\Sigma_i = \sbm{ A_i & B_i
  \\ C_i & D_i}$ be linear systems that are related by
Eqs.~\eqref{eq:imp2scat}--\eqref{eq:scat2imp} where $R> 0$ is
invertible. Then the following are equivalent:
\begin{enumerate}
   \item \label{ExtCayleyPassivityPropClaim1} $\Sigma_i$ is impedance passive;
   \item \label{ExtCayleyPassivityPropClaim2} $\Sigma(R)$ is scattering passive for some invertible $R > 0$; and
   \item \label{ExtCayleyPassivityPropClaim3} $\Sigma(R)$ is scattering passive for all invertible $R > 0$.
\end{enumerate}
The equivalences remain true if the word ``passive'' is replaced by ``conservative''.
\end{prop}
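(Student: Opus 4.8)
The plan is to reduce everything to the characterisations already available: Proposition~\ref{ImpPassiveProp} for impedance passivity on the $\Sigma_i$ side, and Proposition~\ref{ScatteringPassivePropDiscrete} (or equivalently the resolvent condition, but I prefer to work with the discrete-time LMI via the internal Cayley transform and Proposition~\ref{ScatteringPassiveProp}) for scattering passivity on the $\Sigma(R)$ side. However, the cleanest route is probably the direct one: establish a single algebraic identity --- analogous to the one displayed right after Proposition~\ref{ScatteringPassiveProp} --- relating the impedance passivity matrix $\sbm{A_i^T + A_i & B_i - C_i^T \\ B_i^T - C_i & -D_i^T - D_i}$ of $\Sigma_i$ to the scattering passivity defect $\sbm{A^T + A + C^T C & A^T B + C^T D \\ B^T A + D^T C & B^T B + D^T D - I}$ (the negative of the matrix whose nonpositivity is equivalent to the energy inequality~\eqref{eq:ScatteringEnergyInEq}) of $\Sigma(R)$, conjugated by an invertible matrix built from $(D_i + R)^{-1}$ and $R^{1/2}$. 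Since the passivity matrix being $\leq 0$ is invariant under congruence by an invertible matrix, and the identity will involve an exact congruence (no sign changes, no inequalities lost), the equivalence of passivity --- and, reading the identity as an equality, of conservativity --- will follow for that particular $R$; and since $R$ was arbitrary invertible positive, (ii) $\Leftrightarrow$ (iii) comes for free.

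First I would set $E := (D_i + R)^{-1}$, which exists whenever $\Sigma_i$ is impedance passive by Proposition~\ref{AnyROKProp}, and conversely when $\Sigma(R)$ is scattering passive one checks $1 \notin \sigma(D)$ so that $I - D$ is invertible and Eq.~\eqref{eq:scat2imp} makes sense. Then I would substitute the formulas~\eqref{eq:imp2scat} for $A,B,C,D$ into the quadratic form $\norm{u}^2 - \norm{y}^2 - \frac{d}{dt}\norm{x}^2$ evaluated along trajectories, i.e. into $-2\Re\ipd{(A x + Bu)}{x} + \norm{u}^2 - \norm{Cx + Du}^2$, and collect terms. The key computational fact to exploit is $D = I - 2R^{1/2} E R^{1/2}$, so that $I - D^T D = 2 R^{1/2}(E + E^T) R^{1/2} - 4 R^{1/2} E^T R E R^{1/2} = 2 R^{1/2} E^T \bigl( (D_i + R) + (D_i^T + R) - 2R \bigr) E R^{1/2} = 2 R^{1/2} E^T (D_i + D_i^T) E R^{1/2}$; this is the heart of the matter and is exactly where the impedance-passivity quantity $D_i + D_i^T$ surfaces. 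Analogous but longer manipulations using $B = \sqrt{2}\, B_i E R^{1/2}$, $C = \sqrt{2}\, R^{1/2} E C_i$, $A = A_i - B_i E C_i$ express $A + A^T + C^T C$, $-B^T A - D^T C$, etc., again in terms of $E^T(\cdot)E$ sandwiching blocks of $A_i + A_i^T$, $B_i - C_i^T$, $-D_i - D_i^T$. Assembling these, I expect to obtain precisely
\begin{equation*}
  \bbm{A^T + A + C^T C & A^T B + C^T D \\ B^T A + D^T C & B^T B + D^T D - I}
  = - T^T \bbm{A_i^T + A_i & B_i - C_i^T \\ B_i^T - C_i & -D_i^T - D_i} T
\end{equation*}
for an explicit invertible block matrix $T$ (with an $I$ in the top-left block corresponding to the state space, $\sqrt{2}\, E R^{1/2}$-type entries elsewhere, built so that $T$ is block upper triangular hence invertible). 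Given this identity, the left side is $\leq 0$ iff the right side is, iff (by Proposition~\ref{ImpPassiveProp}) $\Sigma_i$ is impedance passive; equality on one side is equality on the other, giving the conservative case.

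The main obstacle will be the bookkeeping in verifying that single congruence identity: there are cross terms mixing the state and input blocks, and one has to be careful that the $C^T C$ term on the scattering side (which has no analogue on the impedance side) is exactly absorbed, together with the $-2\Re\ipd{Ax}{x}$ term, into the top-left block $A_i^T + A_i$ after the congruence --- this relies on a cancellation of the form $-(B_i E C_i)^T - B_i E C_i + 2 C_i^T E^T R E C_i = -(B_i E C_i)^T - (B_i E C_i) + C_i^T E^T(D_i + D_i^T)E C_i - C_i^T E^T(D_i + R) E C_i - \ldots$ that must be tracked precisely. A sanity check I would run first: specialise to $R = I$ and scalar $m = 1$ with $A_i$ scalar, where all matrices commute and the identity can be confirmed by hand in a couple of lines; and separately check the $D_i = 0$ case, where $\Sigma(R)$ should come out scattering conservative exactly when $\Sigma_i$ is impedance conservative. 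Once the identity is in hand the proof is three lines, so essentially all the work --- and all the risk --- is in that algebraic verification, which I would relegate to "straightforward computation'' in the final write-up after doing it carefully once.
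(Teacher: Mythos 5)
Your strategy is correct in substance but genuinely different from the paper's. The paper does not verify a global congruence identity between the two quadratic forms; instead it invokes the known fact that the external Cayley transform with $R=I$ exchanges impedance and scattering passivity (citing \cite[Theorem~5.2]{Staffans:PCCT:2002}) and reduces an arbitrary invertible $R>0$ to that case: applying the inverse $R=I$ transform to $\Sigma(R)$ produces $\sbm{A_i & B_iR^{-1/2}\\ R^{-1/2}C_i & R^{-1/2}D_iR^{-1/2}}$, whose impedance LMI is the diagonal congruence of that of $\Sigma_i$ by $\sbm{I & 0\\ 0 & R^{-1/2}}$, and Proposition~\ref{ImpPassiveProp} finishes the argument. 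Your route is self-contained (no external citation), delivers the conservative case and \eqref{ExtCayleyPassivityPropClaim2}$\Leftrightarrow$\eqref{ExtCayleyPassivityPropClaim3} in one stroke, and your pivotal identity $I-D^TD=2R^{1/2}E^T(D_i+D_i^T)ER^{1/2}$ is correct --- it is exactly Eq.~\eqref{ProperlyImpPassPropEq1} in disguise. The price is the block bookkeeping, which the paper avoids entirely.

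Two corrections before you write it up. First, your displayed ``scattering defect'' matrix is mis-transcribed: for the continuous-time inequality \eqref{eq:ScatteringEnergyInEq} the off-diagonal block is $B+C^TD$ and the lower-right block is $D^TD-I$; the terms $A^TB$, $B^TA$ and $B^TB$ you wrote belong to the \emph{discrete-time} version (cf.\ Proposition~\ref{ScatteringPassivePropDiscrete}) and contradict the correct quadratic form $-2\langle Ax+Bu,x\rangle+\norm{u}^2-\norm{Cx+Du}^2$ that you state in words two lines later. Second, the brute-force verification is avoidable: by Eq.~\eqref{eq:ImpScatteringSigs} one has $\norm{a}^2-\norm{b}^2=2\langle i,v\rangle$ pointwise, the state is shared, and $(x,i)\mapsto(x,a)$ is the invertible block-triangular map $T=\sbm{I & 0\\ \tfrac{1}{\sqrt2}R^{-1/2}C_i & \tfrac{1}{\sqrt2}R^{-1/2}(D_i+R)}$; hence $T^TM_{\rm scat}T=M_{\rm imp}$ is immediate, with no cancellation to track. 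Finally, the parenthetical claim that scattering passivity of $\Sigma(R)$ by itself forces $1\notin\sigma(D)$ is false ($A=B=C=0$, $D=I$ is scattering conservative); the invertibility of $I-D$ comes from the hypothesis that the two systems are related by Eq.~\eqref{eq:scat2imp}, not from passivity.
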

\begin{proof}
We prove first the implication \eqref{ExtCayleyPassivityPropClaim2}
$\Rightarrow$ \eqref{ExtCayleyPassivityPropClaim1}. Let $\tilde R > 0$
be such that $\Sigma(\tilde R) = \sbm{ \tilde A & \tilde B \\ \tilde C
  & \tilde D}$ is scattering passive where
\begin{equation*}
  \begin{aligned}
   \tilde A & := A_i - B_i \left (D_i + \tilde R \right )^{-1} C_i, \quad
   \tilde B := \sqrt{2} B_i \left (D_i    + \tilde R \right )^{-1} \tilde R^{1/2} \\
   \tilde C & := \sqrt{2} \tilde R^{1/2} \left (D_i + \tilde R \right )^{-1} C_i, \quad
   \tilde D :=  I - 2 \tilde R^{1/2} \left ( D_i + \tilde R \right )^{-1} \tilde R^{1/2}.
\end{aligned}
\end{equation*}
Because the external Cayley transformation with resistance $R = I$
maps between impedance and scattering passive systems by \cite[Theorem
  5.2]{Staffans:PCCT:2002}, the system motivated by
Eq.~\eqref{eq:scat2imp} with $R = I$
\begin{equation}
\begin{aligned}
  \Sigma_i(\tilde R) & = \bbm{\tilde A_i & \tilde B_i \\ \tilde C_i & \tilde D_i}
  : = \bbm{\tilde A + \tilde B (I- \tilde D)^{-1} \tilde C  & \sqrt{2} \tilde B \left(I- \tilde D\right)^{-1}
    \\ \sqrt{2} \left(I - \tilde D\right)^{-1} \tilde C & \left (I-\tilde D\right )^{-1} \left (I+ \tilde D \right )} \\
  & = \bbm{A_i & B_i  \tilde R^{-1/2} \\ \tilde R^{-1/2}  C_i  & \tilde R^{-1/2} D_i \tilde R^{-1/2} }
\end{aligned}
\end{equation}
is impedance passive. Hence,
\begin{equation*}
  \bbm{\tilde A_i & \tilde B_i \\ \tilde C_i & \tilde D_i}
  = \bbm{I  & 0 \\ 0 & \tilde R^{-1/2}} \bbm{A_i & B_i \\ C_i & D_i} \bbm{I & 0 \\ 0 & \tilde R^{-1/2}},
\end{equation*}
and it follows from Proposition~\ref{ImpPassiveProp} that $\Sigma_i$
is impedance passive.

That \eqref{ExtCayleyPassivityPropClaim1} $\Rightarrow$
\eqref{ExtCayleyPassivityPropClaim3} follows by reading the above
given reasoning in converse direction with an arbitrary $R > 0$ in
place of $\tilde R$.  The final implication
\eqref{ExtCayleyPassivityPropClaim3} $\Rightarrow$
\eqref{ExtCayleyPassivityPropClaim2} is trivial.

By inspection, the same arguments hold if the word ``passive'' is
consistently replaced by the word ``conservative'' with the only
difference that the LMI in Proposition~\ref{ImpPassiveProp} is then
satisfied as equalities.
\end{proof}

\begin{prop} \label{ReciprocalPassivityProp}
Let $\Sigma = \sbm{ A &B \\ C & D}$ be a linear system.  Then the
following holds:
\begin{enumerate}
  \item \label{ReciprocalPassivityPropClaim1}
 If the internal reciprocal transform $\Sigma_-$ of $\Sigma$ exists, it is impedance passive
if and only if $\Sigma$ is impedance passive.
\item \label{ReciprocalPassivityPropClaim2} If the external reciprocal
  transform $\Sigma_f$ of $\Sigma$ exists, it is impedance passive if and only
  if $\Sigma$ is impedance passive.
\end{enumerate}
Both the claims remain true if the word ``passive'' is replaced by
``conservative''.
\end{prop}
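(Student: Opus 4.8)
The plan is to reduce both parts to the LMI criterion of Proposition~\ref{ImpPassiveProp} by producing, for each reciprocal transform, an invertible matrix that implements a congruence between the two defining block matrices. For a system $\Sigma = \sbm{A & B \\ C & D}$ with $n$-dimensional state and $m$-dimensional signals write $M_\Sigma := \sbm{A^T+A & B-C^T \\ B^T-C & -D^T-D}$, so that $\Sigma$ is impedance passive exactly when $M_\Sigma \le 0$ and impedance conservative exactly when $M_\Sigma = 0$. A short computation records the associated quadratic form, $\sbm{x \\ u}^T M_\Sigma \sbm{x \\ u} = 2\bigl( \ipd{Ax+Bu}{x} - \ipd{u}{Cx+Du} \bigr)$ for all $x \in \R^n$, $u \in \R^m$, which is what I would use to verify the congruences without grinding through block multiplications.

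For Claim~\eqref{ReciprocalPassivityPropClaim1}, suppose $\Sigma_-$ exists, i.e.\ $A$ is invertible, and put $T := \sbm{A^{-1} & A^{-1}B \\ 0 & -I}$, an invertible $(n+m)\times(n+m)$ matrix. I would show $T^T M_\Sigma T = M_{\Sigma_-}$ by substituting $\sbm{x \\ u} = T \sbm{\xi \\ v}$, that is $x = A^{-1}(\xi + Bv)$ and $u = -v$, into the quadratic form above: then $Ax + Bu = \xi$, so $\ipd{Ax+Bu}{x} = \ipd{A^{-1}\xi + A^{-1}Bv}{\xi}$, while $Cx + Du = CA^{-1}\xi + CA^{-1}Bv - Dv$, so $\ipd{u}{Cx+Du} = \ipd{v}{-CA^{-1}\xi + (D - CA^{-1}B)v}$. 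Recognising the right-hand sides via Eq.~\eqref{eq:InternalReciprocals} as $\ipd{A_-\xi+B_-v}{\xi}$ and $\ipd{v}{C_-\xi+D_-v}$, the whole expression equals $\sbm{\xi \\ v}^T M_{\Sigma_-} \sbm{\xi \\ v}$; symmetry of the matrices then gives $T^T M_\Sigma T = M_{\Sigma_-}$. Since $T$ is invertible, $M_\Sigma \le 0 \iff M_{\Sigma_-} \le 0$ and $M_\Sigma = 0 \iff M_{\Sigma_-} = 0$, which is the claim for both ``passive'' and ``conservative''.

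For Claim~\eqref{ReciprocalPassivityPropClaim2}, suppose $\Sigma_f = \FI(\Sigma)$ exists, i.e.\ $D$ is invertible. The quickest route is to invoke Eq.~\eqref{eq:invsystemDyn}: it provides a bijection between the classical trajectories $(x,u,y)$ of $\Sigma$ and $(x,y,u)$ of $\Sigma_f$ that leaves the state $x$ unchanged and merely swaps the roles of input and output. Because the impedance supply rate $2\ipd{u}{y} = 2\ipd{y}{u}$ is symmetric in input and output, the relation $\frac{d}{dt}\norm{x(t)}^2 \le 2\ipd{u(t)}{y(t)}$ defining impedance passivity of $\Sigma$ is literally the same relation as the one defining impedance passivity of $\Sigma_f$, and likewise with equality for the conservative case; hence one holds iff the other does. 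Alternatively, the same conclusion follows from the congruence $S^T M_\Sigma S = M_{\Sigma_f}$ with the invertible $S := \sbm{I & 0 \\ -D^{-1}C & D^{-1}}$, verified exactly as in the previous paragraph via the substitution $x = \xi$, $u = D^{-1}(v - C\xi)$, which makes $Cx+Du = v$ and $Ax+Bu = A_f\xi+B_fv$.

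I do not anticipate a genuine obstacle: the whole content is the two congruence identities, and the conservative case is then free, since a congruence carries $0$ to $0$. The points to be careful about are the domain conditions --- that $\Sigma_-$, respectively $\Sigma_f$, ``exists'' is precisely the invertibility of $A$, respectively $D$, which is exactly what makes $T$, respectively $S$, invertible --- and the routine manipulations of real inner products (such as $\ipd{Du}{u} = \ipd{u}{Du}$) used to bring the quadratic form of $M_\Sigma$ into the displayed normalised shape.
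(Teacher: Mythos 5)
Your proof is correct and takes essentially the same route as the paper: both claims are reduced to Proposition~\ref{ImpPassiveProp} via a congruence between the two LMI block matrices, and your matrices $T$ and $S$ are precisely the inverses of the congruence factors $\sbm{A & B \\ 0 & -I}$ and $\sbm{I & 0 \\ C & D}$ used in the paper, so the identities are the same ones read in the opposite direction. Verifying them through the quadratic form $2\bigl(\ipd{Ax+Bu}{x}-\ipd{u}{Cx+Du}\bigr)$ rather than block multiplication, and the alternative input/output-swap argument for Claim~\eqref{ReciprocalPassivityPropClaim2}, are sound presentational variants of the same argument.
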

\begin{proof}
  Claim~\eqref{ReciprocalPassivityPropClaim1}: Recalling
  Eq.~\eqref{eq:InternalReciprocals}, we have
   \begin{equation*}
     \bbm{A_-^T +A_- & B_- - C_-^T \\ B_-^T -C_- & -D_-^T-D_- } =
     \bbm{A^{-T} +A^{-1} & A^{-1}B + A^{-T} C^T \\
       B^T A^{-T} + CA^{-1}   & -D^T - D + B^T A^{-T} C^T +  CA^{-1}B}.
   \end{equation*}
   Considering the left hand side, we carry out the following
  congruence transformation:
  \begin{equation*}
    \begin{aligned}
      &  \bbm{A^T & 0 \\ B^T & -I}
      \begin{bmatrix}
        A^{-T} +A^{-1} & A^{-1}B + A^{-T} C^T \\
        B^T A^{-T} + CA^{-1}   & -D^T - D + B^T A^{-T} C^T +  CA^{-1}B
      \end{bmatrix}
      \bbm{A & B \\ 0 & -I} \\
      = &
      \bbm{A^T & 0 \\ B^T & -I}
      \begin{bmatrix}
        A^{-T}A + I        &  (A^{-T} +A^{-1})B  - (  A^{-1}B + A^{-T} C^T) \\
        B^T A^{-T}A  + C   &  (B^T A^{-T} + CA^{-1}) B  + D^T + D - B^T A^{-T} C^T -  CA^{-1}B
      \end{bmatrix} \\
      = &
      \bbm{A^T & 0 \\ B^T & - I}
      \begin{bmatrix}
        A^{-T}A + I        &  A^{-T} (  B- C^T )  \\
        B^T A^{-T}A  + C   &  D^T + D   - B^T A^{-T} ( C^T - B)
      \end{bmatrix} \\
      = &
      \begin{bmatrix}
        A + A^{T}        &    B - C^T   \\
        B^T (A^{-T}A + I)  - (B^T A^{-T}A  + C)   & B^T  A^{-T} (  B - C^T )  - D^T - D   + B^T A^{-T} ( C^T - B)
      \end{bmatrix}
      \\
      = &
      \begin{bmatrix}
        A + A^{T}        &    B - C^T   \\
        B^T - C   &  - D^T - D
      \end{bmatrix}.
    \end{aligned}
  \end{equation*}
  Since $\sbm{A_-^T +A_- & B_- - C_-^T \\ B_-^T -C_- & -D_-^T-D_- }$
  and $\sbm{A^T +A & B - C^T \\ B^T -C & -D^T-D }$ are simultaneously
  negative or vanish, the claim follows from
  Proposition~\ref{ImpPassiveProp}.

  Claim~\eqref{ReciprocalPassivityPropClaim2}: Now we need to study the block matrix
   \begin{equation*}
     \bbm{A_f^T +A_f & B_f - C_f^T \\ B_f^T - C_f & - D_f^T - D_f } =
     \bbm{A^T - C^T D^{-T} B^T + A - BD^{-1} C  &  BD^{-1} + C^T D^{-T} \\
       D^{-1} C + D^{-T} B^T   &  - D^{-T}- D^{-1}}
\end{equation*}
by Eq.~\eqref{eq:invsystem}. It remains to figure out another
congruence transformation:
\begin{equation*}
\begin{aligned}
     & \bbm{ I & C^T  \\ 0  & D^T  }
     \bbm{A^T - C^T D^{-T} B^T + A - BD^{-1} C  &  BD^{-1} + C^T D^{-T} \\
       D^{-1} C + D^{-T} B^T   &  - D^{-T}- D^{-1}}  \bbm{ I & 0  \\  C & D  }  \\
     = &   \bbm{ I & C^T  \\ 0  & D^T  }
     \bbm{A^T + A- C^T D^{-T} B^T  - BD^{-1} C  + (BD^{-1} + C^T D^{-T}) C &  B + C^T D^{-T} D  \\
       D^{-1} C + D^{-T} B^T - ( D^{-T} + D^{-1}) C    & - D^{-T}D - I  }  \\
     = &   \bbm{ I & C^T  \\ 0  & D^T  }
     \bbm{A^T + A- C^T D^{-T} B^T + C^T D^{-T} C &  B + C^T D^{-T} D  \\
       D^{-T} ( B^T -   C )   & - D^{-T}D - I  }  \\
     = &
     \bbm{A^T + A- C^T D^{-T} ( B^T - C ) + C^T D^{-T} ( B^T - C) &  B + C^T D^{-T} D + C^T (- D^{-T}D - I )  \\
          B^T -   C  & - D - D^{T}  }   \\
     = &
     \bbm{A^T + A  &  B - C^T   \\ B^T -   C  & - D - D^{T}  }.
\end{aligned}
\end{equation*}
Again, $\sbm{A_f^T +A_f & B_f - C_f^T \\ B_f^T -C_f & -D_f^T-D_f }$
and $\sbm{A^T +A & B - C^T \\ B^T -C & -D^T-D }$ are simultaneously
negative or vanish which completes the proof by Proposition~\ref{ImpPassiveProp}.
\end{proof}

If the hybrid transform $\Sigma_h$ an impedance passive $\Sigma_i$
exists, then $\Sigma_h$ can be given an equivalent
passivity/conservativity notion can be characterised in terms of LMI's
as well. Similarly, the chain transforms $\Sigma_c$ of $\Sigma_h$ or
the scattering passive $\Sigma(R)$, if they exists, can be given such
equivalent passivity notions. The conservativity of chain transforms
for a (lossless) scattering conservative systems is treated in
\cite[Chapter~4.4]{HK:CSAH} in terms of $J$-losslessness.  Since the
mathematical formulations of these variants is immaterial for the
purpose of this article, we leave the details for an interested
reader.

\begin{figure}[h]
  \centering
  \includegraphics[scale=1.0]{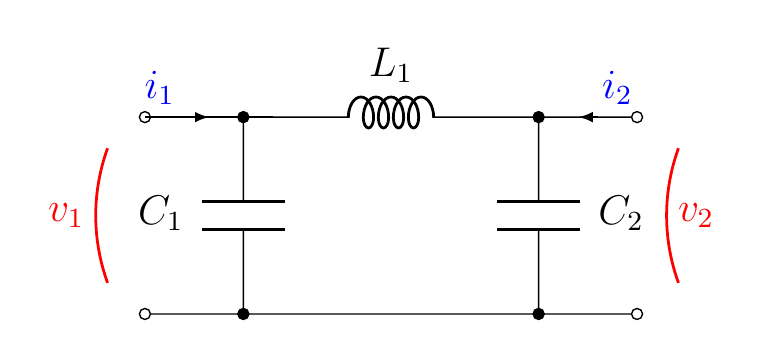}
  \caption{A lossless $\pi$-topology circuit for
    Example~\ref{piTopologyExample}, consisting of two capacitances
    $C_1, C_2$, and one inductance $L_1$.}
  \label{fig:piTopology}
\end{figure}

There are elementary physically motivated examples where, e.g., the
conditions of Proposition~\ref{ReciprocalPassivityProp} are not
satisfied.
\begin{exm} \label{piTopologyExample}
  The governing equations for the LC circuit shown in
  Fig.~\ref{fig:piTopology} are
  \begin{equation}
    \label{eq:KCL}
    p_1-p_2 = L_2 \left (i_1-i_3 \right )', \quad
    p_1 = \frac{1}{C_1}\int_0^t i_3 d\tau, \quad \text{and} \quad
    p_2 = \frac{1}{C_3}\int_0^t \left ( i_1 + i_2 -i_3 \right )d\tau.
  \end{equation}
  By algebraic manipulations, these signals satisfy
  \begin{equation}
    \label{eq:redOrd}
      p_1' = p_a,  \quad 
      C_1 p_a' - i_1' = \frac{p_2-p_1}{L_1}, \quad
      C_2 p_2' = i_1 + i_2 - C_1p_a
  \end{equation}
  where $p_a := p_1'$. Using the scattering type of signals given in
  Eq.~\eqref{eq:ImpScatteringSigs}, we get (after carrying out a
  similarity transformation in the state space) the scattering
  conservative model $\Sigma(R) = \sbm{A & B \\ C & D}$ for any
  positive $R := \sbm{ R_1 & 0 \\ 0 & R_2}$ where
  \begin{equation*}
    A := \bbm{-\frac{1}{{R_1} C_1 } & \frac{1}{\sqrt{{L_1} {C_1}}} & 0 \\
      -\frac{1}{\sqrt{{L_1}  {C_1} }} & 0 & \frac{1}{\sqrt{{L_1} {C_2}}} \\
      0 & -\frac{1}{\sqrt{{L_1} {C_2} }} & -\frac{1}{{R_2}{C_2} } }, \quad
    B :=  \bbm{\sqrt{\frac{2}{R_1 C_1}} & 0 \\
      0 & 0 \\
      0  & \sqrt{\frac{2}{R_2 C_2}}},
  \end{equation*}
  $C := B^T$, and $D := -I$.  The corresponding impedance system
  $\Sigma_i = \sbm{A_i & B_i \\ C_i & D_i}$ can be produced by the
  inverse external Cayley transformation of Eq.~\eqref{eq:scat2imp}
  with $D_i = 0$. The system $\Sigma_i$ is impedance conservative by
  Proposition~\ref{ImpPassiveProp}, and $\Sigma(R)$ is then scattering
  conservative by Proposition~\ref{ExtCayleyPassivityProp}. This is
  all of the good news that there are about this example.

  The internal reciprocal transform of $\Sigma_i$, given by
  Eq.~\eqref{eq:InternalReciprocals} for $\Sigma(R)$ instead of
  $\Sigma_i$, does not exists since $A_i$ given by
  Eq.~\eqref{eq:scat2imp} is not invertible. The external reciprocal
  transform of $\Sigma_i$, namely the admittance system given by the
  $\FI$ operation Eq.~\eqref{eq:invsystem}, does not exists since $D_i$
  is not invertible. Neither does the hybrid system, given by
  Eq.~\eqref{eq:hybrid}, exist since $(D_i)_{22} = 0$ is not
  invertible.

  Furthermore, the transform $\Chain(\Sigma(R))$ does not exists since
  $D_{21} = 0$.  Both the internal and external reciprocal transforms
  of $\Sigma(R)$ exist.
\end{exm}
It is instructive to observe that even more general scattering
\emph{conservative} systems with \emph{two-dimensional signals} can
never be transformed to hybrid form assuming that their impedance
descriptions are possible to begin with.
\begin{exm} \label{CuteCounterExample}
  Let $\Sigma =\sbm{A&B\\C&D}$ be a scattering conservative system
  with $m = 2$.  Such systems (with real matrices) are characterised
  by the equations
  \begin{equation}
    \label{eq:conservativeCondition}
    \begin{aligned}
      A+A^T& = -C^TC=-BB^T, \quad    D^T D = DD^T =I, \\
      B^T &= -D^T C, \quad C = -DB^T.
    \end{aligned}
  \end{equation}
  see, e.g.,~\cite[Proposition~1.4]{M-S-W:HTCCS}. Then the feedthrough
  matrix $D \in \R^{2 \times 2}$ is an orthogonal matrix which is
  always of one of the following two types:
  \[
  \text{ Either }  \qquad S = \begin{bmatrix}
    \rho & \tau \\  \tau & -\rho
  \end{bmatrix} \quad \text{(reflection)}
  \qquad \text{ or }  \qquad
  Q =
  \begin{bmatrix}
    \rho & -\tau \\  \tau & \rho
    \end{bmatrix} \quad \text{(pure rotation)}
  \]
  where $\rho^2+\tau^2=1$, $\mathop{det}(S) = -1$ (with eigenvalues
  $\pm1$), and $\mathop{det}(Q) = 1$ (with complex conjugate
  eigenvalues unless $\tau = 0$). Thus, the matrix $D$ has two
  possibilities: namely,
  \begin{equation*}
    \begin{cases}
      & \mathop{det}(D) = -1 \quad \Rightarrow \quad I - D \text{ is
        not invertible; or } \\ & \mathop{det}(D) = 1 \quad
      \Rightarrow \quad I - D \text{ is invertible if and only if }
      \rho \in [-1,1).
    \end{cases}
  \end{equation*}
  Recalling Eq.~\eqref{eq:scat2imp}, we may produce the impedance
  system $\Sigma_i = \sbm{A_i &B_i \\ C_i & D_i}$ by the inverse
  external Cayley transformation only if $\mathop{det}(D) = 1$ but
  $\rho \in [-1,1)$.  In this case, $D_i$ satisfies
  \begin{equation} \label{CuteCounterExampleEq1}
    D_i = (I-D)^{-1}(I+D)= \frac{\tau}{1-\rho}
    \left[
      \begin{array}{cc}
        0 & -1 \\
        1 & 0 \\
      \end{array}
    \right].
  \end{equation}
  (Without loss of generality, we have set $R = I$ in
  Eq.~\eqref{eq:scat2imp}.)  For $\rho \in (-1, 1)$, the element
  $\left ( D_i \right )_{22} = 0$ is not invertible (hence, the hybrid
  transform of $\Sigma_i$ does not exist) even though $D_i$ is
  invertible and the admittance system $\Sigma_f$ exists since $\tau
  \neq 0$. Moreover, even $\Chain(\Sigma)$ exists for $\rho \in (-1,
  1)$.

  Observe that Example~\ref{piTopologyExample} is the special case of
  Eq.~\eqref{CuteCounterExampleEq1} where $\rho = -1$ and $\tau = 0$.
\end{exm}

Based on
Examples~\ref{piTopologyExample}~and~\ref{CuteCounterExample},
impedance conservative systems $\Sigma_i$ in finite dimension may not
allow external transformations other than the external Cayley
transformation $\Sigma(R)$. Even $\Sigma(R)$ may fail the condition
required for defining Redheffer products. A more desirable subclass of
impedance passive systems is characterised as follows:
\begin{defn} \label{ProperlyImpPassDef}
  An impedance passive $\Sigma_i = \sbm{A_i & B_i \\ C_i & D_i}$ is
  \emph{properly} impedance passive if the matrix $D_i^T+D_i \geq 0$ is
  invertible.
\end{defn}
Obviously, the impedance conservative systems, denoted by $\Sigma_i$,
described in
Examples~\ref{piTopologyExample}~and~\ref{CuteCounterExample} are
impedance passive but not properly so. The external Cayley and
reciprocal transforms of a properly impedance passive system have a
nice description:

\begin{thm}
  \label{ProperlyImpPassThm}
  Let $\Sigma_i = \sbm{A_i &B_i \\ C_i & D_i}$ be an impedance passive
  system whose external Cayley transform $\Sigma(R) = \sbm{A & B \\ C
    & D}$ is given by Eq.~\eqref{eq:imp2scat} where $R > 0$
  is invertible.  Then the following conditions are equivalent:
  \begin{enumerate}
  \item \label{ProperlyImpPassPropClaim1}
    $\Sigma_i$ is properly impedance passive; 
  \item \label{ProperlyImpPassPropClaim2}
    $\Sigma(R)$ is scattering passive, and the matrix $I - D^T
    D$ is invertible and positive; 
  \item \label{ProperlyImpPassPropClaim3} $\Sigma(R)$ is scattering
    passive, $\norm{D} < 1$, and $\sigma(D) \subset \D := \{ z \in \C \, : \, |z| < 1
    \}$ holds; 
  \item \label{ProperlyImpPassPropClaim4} $\Sigma_i$ is properly
    impedance passive, and its feedthrough matrix $D_i$ is invertible;
    and
  \item \label{ProperlyImpPassPropClaim5} the external reciprocal
    transform of $\Sigma_i$ exists, and it is properly impedance
    passive.
  \end{enumerate}
\end{thm}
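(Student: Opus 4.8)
The plan is to establish the cycle of implications
\eqref{ProperlyImpPassPropClaim1} $\Rightarrow$ \eqref{ProperlyImpPassPropClaim2} $\Rightarrow$ \eqref{ProperlyImpPassPropClaim3} $\Rightarrow$ \eqref{ProperlyImpPassPropClaim1}, and then append the two ``refined'' statements \eqref{ProperlyImpPassPropClaim4} and \eqref{ProperlyImpPassPropClaim5} at the end. The computational backbone is the last formula in Eq.~\eqref{eq:imp2scat}, namely $D = I - 2 R^{1/2}(D_i+R)^{-1}R^{1/2}$, from which one gets by direct manipulation
\begin{equation*}
  I - D^T D = 2 R^{1/2}(D_i+R)^{-T}\left(D_i^T + D_i\right)(D_i+R)^{-1}R^{1/2}.
\end{equation*}
(One verifies this by writing $I-D = 2R^{1/2}(D_i+R)^{-1}R^{1/2}$ and $I + D^T = 2 R^{1/2}(D_i+R)^{-T}\left[(D_i+R)^T - R\right]R^{-1/2} \cdot (\ldots)$; in any case it is a finite, routine algebraic identity using only that $R=R^T>0$ and that $D_i+R$ is invertible, the latter guaranteed by Proposition~\ref{AnyROKProp}.) This identity is the key object: it is a congruence (by the invertible matrix $(D_i+R)^{-1}R^{1/2}$) between $I-D^TD$ and $2(D_i^T+D_i)$. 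Hence $I-D^TD$ is invertible iff $D_i^T+D_i$ is invertible, and $I-D^TD > 0$ iff $D_i^T+D_i > 0$ — which is exactly \eqref{ProperlyImpPassPropClaim1} $\Leftrightarrow$ (the ``$I-D^TD$ part of'') \eqref{ProperlyImpPassPropClaim2}. The scattering passivity of $\Sigma(R)$ in \eqref{ProperlyImpPassPropClaim2} comes for free from impedance passivity of $\Sigma_i$ via Proposition~\ref{ExtCayleyPassivityProp}, so \eqref{ProperlyImpPassPropClaim1} $\Rightarrow$ \eqref{ProperlyImpPassPropClaim2} is complete, and conversely \eqref{ProperlyImpPassPropClaim2} $\Rightarrow$ \eqref{ProperlyImpPassPropClaim1} also follows (scattering passivity of $\Sigma(R)$ gives impedance passivity of $\Sigma_i$, and the congruence upgrades it to ``properly'').

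For \eqref{ProperlyImpPassPropClaim2} $\Leftrightarrow$ \eqref{ProperlyImpPassPropClaim3}: once $\Sigma(R)$ is scattering passive we know from the discrete-time LMI characterisation (Proposition~\ref{ScatteringPassivePropDiscrete} applied through Proposition~\ref{ScatteringPassiveProp}, or directly from the definition of scattering passivity evaluated at $x\equiv 0$) that $\|D\| \le 1$, i.e.\ $I - D^TD \ge 0$ automatically. Therefore adding ``$I-D^TD$ is invertible and positive'' is equivalent to ``$I-D^TD>0$'', which is equivalent to $\|D\|<1$. And $\|D\|<1$ trivially forces $\sigma(D)\subset\D$ by the spectral radius bound, while conversely, for a matrix with $I-D^TD\ge0$ one has $\|D\|\le 1$, and $\|D\|=1$ is possible only if $D$ has a singular value equal to $1$; the extra spectral condition $\sigma(D)\subset\D$ together with $\|D\|\le 1$ still need not give $\|D\|<1$ in general — so here one must be a little careful. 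I would instead argue \eqref{ProperlyImpPassPropClaim3} $\Rightarrow$ \eqref{ProperlyImpPassPropClaim1} directly: if $\|D\|<1$ then $I-D^TD>0$ is invertible, and running the congruence identity backwards gives $D_i^T+D_i>0$ invertible, hence \eqref{ProperlyImpPassPropClaim1}. So the clean cycle is \eqref{ProperlyImpPassPropClaim1}$\Rightarrow$\eqref{ProperlyImpPassPropClaim2}$\Rightarrow$\eqref{ProperlyImpPassPropClaim3}$\Rightarrow$\eqref{ProperlyImpPassPropClaim1}, where the middle arrow uses only $\|D\|<1 \Rightarrow \sigma(D)\subset\D$.

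For the last two items: \eqref{ProperlyImpPassPropClaim4} $\Rightarrow$ \eqref{ProperlyImpPassPropClaim1} is trivial. For \eqref{ProperlyImpPassPropClaim1} $\Rightarrow$ \eqref{ProperlyImpPassPropClaim4}, I must show $D_i$ is invertible. Here I would use the inverse external Cayley formula from Eq.~\eqref{eq:scat2imp}, $D_i = R^{1/2}(I-D)^{-1}(I+D)R^{1/2}$, so $D_i$ is invertible iff $I+D$ is; and $I+D = 2R^{1/2}(D_i+R)^{-T}\bigl[(D_i+R)^T - R\bigr](\ldots)$ — more simply, from $D = I - 2R^{1/2}(D_i+R)^{-1}R^{1/2}$ one computes $I+D = 2I - 2R^{1/2}(D_i+R)^{-1}R^{1/2} = 2R^{1/2}(D_i+R)^{-1}\bigl[(D_i+R) - R\bigr]R^{-1/2}\cdot R^{1/2}\cdots$; carefully, $I+D = 2R^{1/2}(D_i+R)^{-1}D_i R^{-1/2} \cdot$ — i.e.\ $I+D$ is invertible iff $D_i$ is invertible. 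So \eqref{ProperlyImpPassPropClaim1}$\not\Rightarrow$\eqref{ProperlyImpPassPropClaim4} in general unless $D_i$ happens to be invertible — but $D_i^T+D_i>0$ does \emph{not} imply $D_i$ invertible in even dimensions (e.g.\ $D_i=\sbm{1 & 0 \\ 2 & 0}$ has $D_i+D_i^T = \sbm{2&2\\2&0}$ which is indefinite, bad example; but $D_i^T+D_i>0$ with $D_i$ singular is impossible: if $D_i x = 0$ then $x^T(D_i^T+D_i)x = 0$, contradicting positive definiteness). Hence \emph{$D_i^T+D_i>0$ actually forces $D_i$ invertible}, which is the crucial observation making \eqref{ProperlyImpPassPropClaim1}$\Rightarrow$\eqref{ProperlyImpPassPropClaim4} hold. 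Finally \eqref{ProperlyImpPassPropClaim4}$\Leftrightarrow$\eqref{ProperlyImpPassPropClaim5}: if $D_i$ is invertible the external reciprocal transform $\Sigma_f=\FI(\Sigma_i)$ exists by Section~\ref{ERTSec}; by Proposition~\ref{ReciprocalPassivityProp}\eqref{ReciprocalPassivityPropClaim2} it is impedance passive; and its feedthrough is $D_i^{-1}$, whose symmetric part is $D_i^{-1}+D_i^{-T} = D_i^{-T}(D_i^T+D_i)D_i^{-1} > 0$ and invertible by the congruence — so $\Sigma_f$ is properly impedance passive; the converse is identical since external reciprocal transformation is an involution. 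The main obstacle is none of the implications individually but rather getting the algebraic congruence identity for $I-D^TD$ (and the analogous one for $I+D$ versus $D_i$) stated correctly once and reused; everything else is bookkeeping with the cited propositions and the elementary fact that a real square matrix with positive-definite symmetric part is invertible.
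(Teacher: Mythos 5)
Your proposal is correct and takes essentially the same approach as the paper: the backbone in both is the congruence between $I-D^TD$ and $D_i^T+D_i$ (your identity $I-D^TD = 2R^{1/2}(D_i+R)^{-T}(D_i^T+D_i)(D_i+R)^{-1}R^{1/2}$ is an equivalent rewriting of the one the paper derives from $R^{-1/2}D_iR^{-1/2}=(I-D)^{-1}(I+D)$), combined with Propositions~\ref{AnyROKProp}, \ref{ExtCayleyPassivityProp} and~\ref{ReciprocalPassivityProp} for the passivity bookkeeping. The only real difference is that you obtain the invertibility of $D_i$ in condition (iv) directly from the elementary fact that a real matrix with positive definite symmetric part has trivial kernel, whereas the paper deduces it from $\pm 1\notin\sigma(D)$ via $D_i=R^{1/2}(I-D)^{-1}(I+D)R^{1/2}$; your shortcut is valid and slightly cleaner, and it renders the half-finished $I+D$ computation in your writeup unnecessary.
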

\noindent By Claim~\eqref{ProperlyImpPassPropClaim3} and
Propositions~\ref{ScatteringConservativeProp}~and~\ref{ExtCayleyPassivityProp},
an impedance conservative system cannot be properly impedance passive.
\begin{proof}
\eqref{ProperlyImpPassPropClaim1} $\Leftrightarrow$
\eqref{ProperlyImpPassPropClaim2}: By
Proposition~\eqref{ExtCayleyPassivityProp}, $\Sigma_i$ is impedance
passive if and only if $\Sigma(R)$ is scattering passive.  To verify
the equivalence, only the feedthrough matrices $D_i$ and $D$ remains
to be considered.  By Proposition~\ref{AnyROKProp}, the matrix
$(I-D)^{-1}$ and its transpose $(I-D^T)^{-1}$ exist.  From
Eq.~\eqref{eq:scat2imp} we see that $R^{-1/2} D_i R^{-1/2} = \left
(I-D\right )^{-1} \left (I+D \right ) = I + 2 D \left (I-D\right
)^{-1} = 2 \left (I-D\right )^{-1} - I$. Thus
\begin{equation} \label{ProperlyImpPassPropEq1}
  \begin{aligned}
    & R^{-1/2} \left (D_i^T + D_i \right ) R^{-1/2} 
    = 2 \left( (I-D^T)^{-1} + (I-D)^{-1} - I \right)\\
    & = 2 (I-D^T)^{-1} \left[(I-D) + (I-D^T) - (I-D^T)(I-D) \right](I-D)^{-1}\\
    & = 2 (I-D^T)^{-1} \left[2I - D -D^T - (I - D - D^T + D^T D) \right](I-D)^{-1}\\
    & = 2 (I-D^T)^{-1} \left[I-D^TD \right](I-D)^{-1}.
  \end{aligned}
\end{equation}
  Since the invertibility and positivity of $D_i^T + D_i$ is
  equivalent with that of $I-D^TD$, the equivalence follows.

\eqref{ProperlyImpPassPropClaim2} $\Rightarrow$
\eqref{ProperlyImpPassPropClaim3}: We have $1 \notin \sigma(D)$ by
Proposition~\ref{AnyROKProp} and $I - D^T D \geq 0$ for any scattering
passive $\Sigma(R)$. If $I - D^T D$ is invertible and nonnegative, the
we have $\norm{u } > \norm{D u }$ for all $u \neq 0$. Since $D$ operates in 
a finite-dimensional space where the unit ball is compact, 
we have $\norm{D} < 1$ and hence $\sigma(D) \subset \D$.

\eqref{ProperlyImpPassPropClaim3} $\Rightarrow$
\eqref{ProperlyImpPassPropClaim4}: Again, $\Sigma_i$ is impedance
passive if and only if $\Sigma(R)$ is scattering passive.  Since, in
particular, $\pm 1 \notin \sigma(D)$ holds, the invertibility of $D_i$
follows from the last equation in \eqref{eq:scat2imp}. That $D_i^T +
D_i$ is invertible follows now from
Eq.~\eqref{ProperlyImpPassPropEq1}.

\eqref{ProperlyImpPassPropClaim4} $\Rightarrow$
\eqref{ProperlyImpPassPropClaim5}: The external reciprocal transform
$\left ( \Sigma_i \right )_f$ exists by
claim~\eqref{ProperlyImpPassPropClaim2} and
Proposition~\ref{FInvProp}, and its feedthrough matrix is
$D_i^{-1}$. The system $\left ( \Sigma_i \right )_f$ is impedance
passive by claim \eqref{ReciprocalPassivityPropClaim2}~of~
Proposition~\ref{ReciprocalPassivityProp}.  Also, the matrix $D_i^T$
is invertible, and we have $D_i^{-T} + D_i^{-1} = D_i^{-T} \left
(D_i^T + D_i \right ) D_i^{-1}$ where $D_i^T + D_i$ is invertible by
assumption.  Thus $D_i^{-T} + D_i^{-1}$ is invertible, and $\left (
\Sigma_i \right )_f$ properly is impedance passive.

\eqref{ProperlyImpPassPropClaim4} $\Rightarrow$
\eqref{ProperlyImpPassPropClaim1}: Trivial.

\eqref{ProperlyImpPassPropClaim5} $\Rightarrow$
\eqref{ProperlyImpPassPropClaim4}: Since $\left ( \Sigma_i \right )_f$
exists, $D_i$ is invertible. As the original system satisfies
$\Sigma_i = \left ( \left ( \Sigma_i \right )_f \right )_f$, the claim
follows from the already proved implication
\eqref{ProperlyImpPassPropClaim4} $\Rightarrow$
\eqref{ProperlyImpPassPropClaim5}.
\end{proof}

\section{\label{RedhefferSec} Redheffer star product}

We proceed to study two-directional feedback couplings.  We allow
external inputs and outputs in addition to those that are internal to
the feedback loop. The fundamental structure of such couplings for
systems $\Sigma_p$ and $\Sigma_q$ is the \emph{Redheffer star product}
$\Sigma_p \star \Sigma_q$.  We will ultimately produce a realisation
formula for $\Sigma_p \star \Sigma_q$ but we first consider it
plainly as a feedback configuration shown in
Fig.~\ref{fig:RedhefferDef}.
\begin{figure}[h]
  \centering
  \raisebox{-0.5\height}{\includegraphics[scale=0.8]{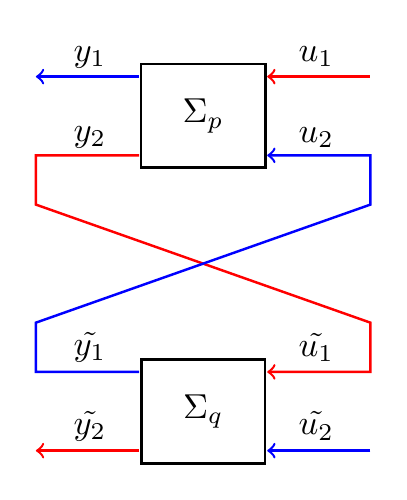}} \hspace{2cm}
  \raisebox{-0.5\height}{\includegraphics[scale=0.8]{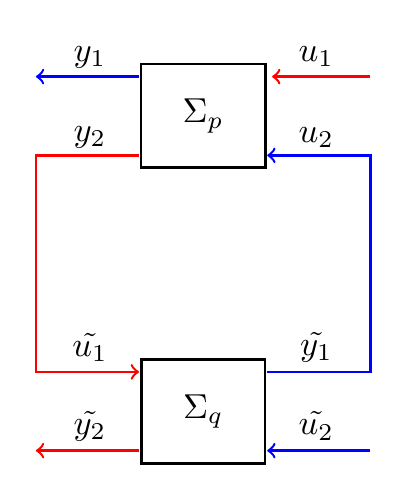}}
  \caption{The feedback configuration describing the Redheffer star
    product $\Sigma_p \star \Sigma_q$ of systems $\Sigma_p$ and
    $\Sigma_q$ diagrammatically drawn in the standard form (left) and
    an equivalent representation (right).  If the systems $\Sigma_p$
    and $\Sigma_q$ are considered as scattering system, the red
    signals refer to the right going energy wave, and the blue signals
    refer to the left going energy wave.  If the systems are of hybrid
    form, the colour separates between current and voltage
    signals.  \label{fig:RedhefferDef}}
\end{figure}
Starting from impedance passive systems, both the scattering systems
in Section~\ref{ExtCayleySec} and the hybrid systems in
Section~\ref{HybridSec} are eligible for Redheffer star products with
other systems of the same kind. We have, in essence, two ways to treat
the same feedback connection. The hybrid transformation requires an
extra invertibility condition whereas any impedance passive system can
be externally Cayley transformed without such
restrictions.\footnote{Even then, the hybrid transformation could well
  be preferable to external Cayley transformation in some particular
  application.}

However, there are additional requirement on systems $\Sigma_p$ and
$\Sigma_q$ to be suitable for forming the feedback system $\Sigma_p
\star \Sigma_q$. Firstly, the signal pairs $(u_2 , \tilde{y}_1)$ and
$(\tilde{u}_1 , y_2)$ in Fig.~\ref{fig:RedhefferDef} must be of
compatible mathematical and physical dimensions. The latter is here
reflected by the semantic requirement that the colours of signals in
couplings in Fig~\ref{fig:RedhefferDef} are not allowed to mix. Also,
the feedback loop in Fig.~\ref{fig:RedhefferDef} may fail to be
\emph{well-posed} in the sense that it cannot be described by any
finite-dimensional state space system at all; see
Definition~\ref{WellPosedFBDef} below.

The feedback connection in Fig.~\ref{fig:RedhefferDef} alone does not
uniquely define a state space for $\Sigma_p \star \Sigma_q$.
Following \cite[Chapter~4]{HK:CSAH}, it is sometimes possible to
compute \emph{one} state space realisation for $\Sigma_p \star
\Sigma_q$ by reducing it to the cascade product of chain transformed
systems; see Definition~\ref{BasicSystemOpsDef},
Section~\ref{sec:chain-scattering}, and the following state space
variant of \cite[Eqs.~(4.7)--(4.8)~in~Section~4.1]{HK:CSAH}:
\begin{thm} \label{RedhefferCascadeThm}
  Let 
\begin{equation} \label{eq:RedhefferProductParts}
\Sigma_p =\bbm{
  A_p & \bbm{B_{p_1} & B_{p_2}} \\
  \bbm{C_{p_1}\\  C_{p_2}} & \bbm{D_{p_{11}} & D_{p_{12}} \\ D_{p_{21}} & D_{p_{22}}}
}
\qquad \text{ and }
\qquad
\Sigma_q =\bbm{
  A_q & \bbm{B_{q_1} & B_{q_2}} \\
  \bbm{C_{q_1}\\  C_{q_2}} & \bbm{D_{q_{11}} & D_{q_{12}} \\ D_{q_{21}} & D_{q_{22}}}
}
\end{equation}
be systems whose signals in Fig.~\ref{fig:RedhefferDef} are
(dimensionally) feasible for the Redheffer star product $\Sigma_p
\star \Sigma_q$.  Assume further that the matrices
  \begin{equation} \label{eq:fullInvertibilityCondition}
    D_{p_{21}}, \quad  D_{q_{21}},  \quad \text{ and } \quad \Delta_1 := I - D_{p_{22}}D_{q_{11}}
  \end{equation}
  are invertible.
  \begin{enumerate}
  \item \label{RedhefferCascadeThmClaim1} The chain transforms
    $\Chain(\Sigma_p)$ and $\Chain(\Sigma_q)$ defined in
    Eq.~\eqref{ChainScatteringDef} exist.
  \item \label{RedhefferCascadeThmClaim2} There exists a state space
    realisation, denoted by $\Sigma_p \star \Sigma_q$, such that
    $\Chain(\Sigma_p \star \Sigma_q)$ exists and satisfies
    \begin{equation}
      \label{eq:RedhefferCascadeThm}
      \Chain \left(\Sigma_p \star \Sigma_q  \right ) = \Chain(\Sigma_p) * \Chain(\Sigma_q)
    \end{equation}
    holds where $*$ denotes the cascade product of realisations.
  \item \label{RedhefferCascadeThmClaim3} The input and output signals
    of $\Sigma_p \star \Sigma_q$ have the same relations as the
    external signals $\sbm{u_1 \\ \tilde{u}_2}$ and $\sbm{y_1 \\ \tilde{y}_2}$ in
    Fig.~\ref{fig:RedhefferDef}. 
  \end{enumerate}
  \end{thm}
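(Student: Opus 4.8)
The plan is as follows. Claim~\eqref{RedhefferCascadeThmClaim1} is immediate from Eq.~\eqref{eq:chain}: the transform $\Chain(\Sigma)$ exists exactly when the bottom-left block of the feedthrough of $\Sigma$ is invertible, so $\Chain(\Sigma_p)$ and $\Chain(\Sigma_q)$ exist because $D_{p_{21}}$ and $D_{q_{21}}$ are invertible by hypothesis. For claims~\eqref{RedhefferCascadeThmClaim2} and~\eqref{RedhefferCascadeThmClaim3} I would (a) argue at the level of system diagrams that chain-transforming $\Sigma_p$ and $\Sigma_q$ converts the Redheffer feedback interconnection of Fig.~\ref{fig:RedhefferDef} into the series (cascade) interconnection of $\Chain(\Sigma_p)$ and $\Chain(\Sigma_q)$; (b) check that $\Chain^{-1}$ is applicable to this cascade; and (c) \emph{define} $\Sigma_p \star \Sigma_q := \Chain^{-1}(\Chain(\Sigma_p) * \Chain(\Sigma_q))$ and read off the assertions.

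For step (a), recall that $\Chain = \TI \circ \OF$ and that, by construction (Fig.~\ref{fig:SystemTransformationDiagram}), both $\OF$ and $\TI$ leave the mathematical relations among the four signals $u_1,u_2,y_1,y_2$ of a system untouched: they only change which signals count as inputs and which as outputs. Applying $\OF$ and then $\TI$ to $\Sigma_p$ moves its ``port-2'' signals into the through-position of the diagram, so that in the chain-transformed picture the output channel of $\Chain(\Sigma_p)$ carries precisely the signal pair that the loop of Fig.~\ref{fig:RedhefferDef} routes from $\Sigma_p$ to $\Sigma_q$; doing the same to $\Sigma_q$ lines the two chain-transformed blocks up head-to-tail, with the external signals $\sbm{u_1 \\ \tilde{u}_2}$ and $\sbm{y_1 \\ \tilde{y}_2}$ left at the two free ends. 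That head-to-tail diagram is by definition the cascade product of Definition~\ref{BasicSystemOpsDef}. Substituting the state-space formulas for $\OF$ and $\TI$ from Section~\ref{FundamentalSec} and for $*$ from Definition~\ref{BasicSystemOpsDef} into this diagrammatic identity yields the matrix identity Eq.~\eqref{eq:RedhefferCascadeThm}; the solvability of the internal loop (expressing $\tilde{u}_1,y_2$ and $u_2,\tilde{y}_1$ in terms of the external signals) is exactly what the invertibility of $\Delta_1$ in Eq.~\eqref{eq:fullInvertibilityCondition} provides.

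For step (b), by Eq.~\eqref{eq:dechain} the transform $\Chain^{-1}$ is applicable to a system exactly when the bottom-right block of its feedthrough is invertible. Multiplying out the feedthroughs of $\Chain(\Sigma_p)$ and $\Chain(\Sigma_q)$ taken from Eq.~\eqref{eq:chain}, the feedthrough of $\Chain(\Sigma_p) * \Chain(\Sigma_q)$ is their product, whose bottom-right block is
\begin{align*}
  (-D_{p_{21}}^{-1}D_{p_{22}})(D_{q_{11}}D_{q_{21}}^{-1}) + D_{p_{21}}^{-1}D_{q_{21}}^{-1}
  &= D_{p_{21}}^{-1}(I - D_{p_{22}}D_{q_{11}})D_{q_{21}}^{-1} = D_{p_{21}}^{-1}\,\Delta_1\,D_{q_{21}}^{-1},
\end{align*}
which is invertible precisely because $D_{p_{21}}$, $D_{q_{21}}$ and $\Delta_1$ are --- exactly the hypotheses Eq.~\eqref{eq:fullInvertibilityCondition}. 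Hence $\Sigma_p \star \Sigma_q := \Chain^{-1}(\Chain(\Sigma_p) * \Chain(\Sigma_q))$ is well-defined. Moreover, by Eq.~\eqref{eq:dechain} the bottom-left feedthrough block of $\Chain^{-1}$ of any system is an inverse of that system's bottom-right feedthrough block, so $\Chain$ is in turn applicable to $\Sigma_p \star \Sigma_q$, and $\Chain(\Sigma_p \star \Sigma_q) = \Chain(\Sigma_p) * \Chain(\Sigma_q)$ holds because $\Chain$ and $\Chain^{-1}$ are mutually inverse on the relevant domains. This gives claim~\eqref{RedhefferCascadeThmClaim2}; claim~\eqref{RedhefferCascadeThmClaim3} then follows by tracking the external signals through the $\OF$ and $\TI$ relabelings of step (a), none of which touches the external top-left port.

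The two block computations --- the bottom-right block above, and the verification that multiplying out the $\OF$, $\TI$ and $*$ formulas reproduces Eq.~\eqref{eq:RedhefferCascadeThm} --- are routine. The substantive point, and the step I expect to demand the most care, is (a): making the diagrammatic identity ``chain transform turns the Redheffer feedback into a cascade'' precise in terms of the signal-relation rules of Sections~\ref{DiagrammeSec} and~\ref{FundamentalSec}, and in particular keeping the bookkeeping of inputs, outputs, and the internal loop straight. Once that identity is established, claims~\eqref{RedhefferCascadeThmClaim2} and~\eqref{RedhefferCascadeThmClaim3} reduce to substitution.
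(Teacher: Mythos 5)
Your proposal is correct and follows essentially the same route as the paper: claim (i) from the invertibility of $D_{p_{21}}$ and $D_{q_{21}}$ via Eq.~\eqref{eq:chain}, claim (ii) by computing the bottom-right feedthrough block of $\Chain(\Sigma_p) * \Chain(\Sigma_q)$ as $D_{p_{21}}^{-1}\Delta_1 D_{q_{21}}^{-1}$ and invoking Eq.~\eqref{eq:dechain}, and claim (iii) by diagrammatic signal bookkeeping, which is exactly how the paper argues (it, too, only ``indicates'' this step via the figures). Your additional remark that $\Chain$ is re-applicable to $\Chain^{-1}(\Chain(\Sigma_p) * \Chain(\Sigma_q))$ is a small explicit check the paper leaves implicit, but it does not change the argument.
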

\begin{proof}
Claim~\eqref{RedhefferCascadeThmClaim1} follows from the invertibility of
$D_{p_{21}}$ and  $D_{q_{21}}$; see Eq.~\eqref{eq:chain}.

Claim~\eqref{RedhefferCascadeThmClaim2}: We see from
Eqs.~\eqref{eq:chain}--\eqref{eq:dechain} that the cascade product
system $\Chain(\Sigma_p) * \Chain(\Sigma_q)$ is inverse chain
transformable. Considering the feedthrough matrices of
$\Chain(\Sigma_p)$ and $\Chain(\Sigma_q)$, we get for the feedthrough
of $\Chain(\Sigma_p) * \Chain(\Sigma_q)$ the expression
\begin{equation*}
\begin{aligned}
  &  \bbm{D_{p_{12}}- D_{p_{11}}D_{p_{21}}^{-1}D_{p_{22}} & D_{p_{11}} D_{p_{21}}^{-1} \\ -D_{p_{21}}^{-1}D_{p_{22}} & D_{p_{21}}^{-1}}
     \bbm{D_{q_{12}}- D_{q_{11}}D_{q_{21}}^{-1}D_{q_{22}} & D_{q_{11}} D_{q_{21}}^{-1} \\ -D_{q_{21}}^{-1}D_{q_{22}} & D_{q_{21}}^{-1}} \\
  & = \bbm{* & * \\ * & D_{p_{21}}^{-1} \left(I - D_{p_{22}}  D_{q_{11}} \right) D_{q_{21}}^{-1}}
  = \bbm{* & * \\ * & D_{p_{21}}^{-1} \Delta_1  D_{q_{21}}^{-1}}
\end{aligned}
\end{equation*}
where the asterisks denote irrelevant entries. By assumptions, the
bottom right block is invertible, and this is enough by
Eq.~\eqref{eq:dechain} to prove the existence of $\Sigma$ such that
$\Chain \left(\Sigma \right ) = \Chain(\Sigma_p) * \Chain(\Sigma_q)$.

Since Claim~\eqref{RedhefferCascadeThmClaim3} concerns only the
signals of the feedback system, it would be an unnecessary
complication to verify it in terms of state space
realisations. Instead, the proof is indicated in terms of system
diagrams in Fig.~\ref{fig:RedhefferDef}~and~\ref{fig:RedhefferSystem}
by reading them from left to right, and from top to bottom.  The
transformations and the rules of system diagrams given in
Section~\ref{SystemSection} are used together with the
definition $\Chain = \TI \circ \OF$.
\begin{figure}[ht]
  \centering
  \raisebox{-0.5\height}{\includegraphics[scale=0.8]{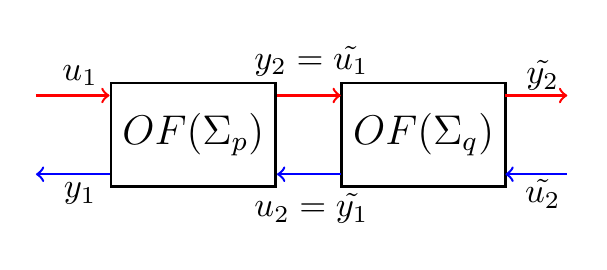}}
  \raisebox{-0.5\height}{\includegraphics[scale=0.8]{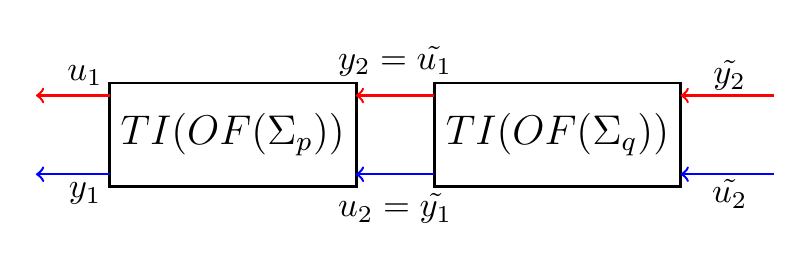}} \newline
  \raisebox{-0.5\height}{\includegraphics[scale=0.8]{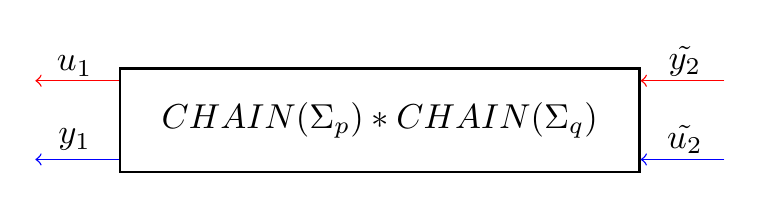}}
  \raisebox{-0.5\height}{\includegraphics[scale=0.8]{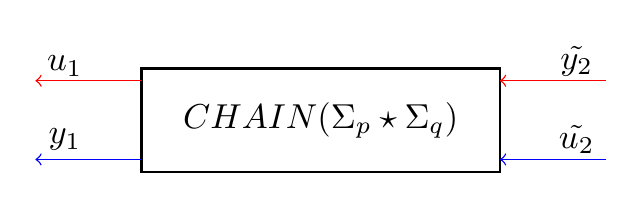}} \newline
  \raisebox{-0.5\height}{\includegraphics[scale=0.8]{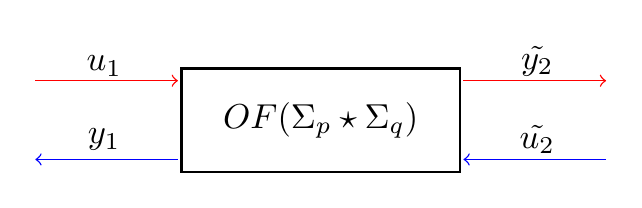}}
  \raisebox{-0.5\height}{\includegraphics[scale=0.8]{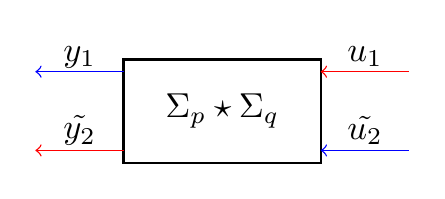}}
  \caption{Equivalent system diagrams involving $\OF$ and $\TI$
    operators and their composition $\Chain = \TI \circ \OF$ of
    Section~\ref{FundamentalSec}. The middle row represents
    Claim~\eqref{RedhefferCascadeThmClaim2}~of~Theorem~\ref{RedhefferCascadeThm}.
    Together with the diagrams in Fig.~\ref{fig:RedhefferDef}, these
    constitute the proof of
    Claim~\eqref{RedhefferCascadeThmClaim3}~of~Theorem~\ref{RedhefferCascadeThm}
    since the same relations between signals $u_1, u_2, y_1, y_2,
    \tilde{u}_1, \tilde{u}_2, \tilde{y}_1$, and $\tilde{y}_2$ in all
    of them.
  \label{fig:RedhefferSystem}}
\end{figure}
\end{proof}
It remains to present a state space formula for $\Sigma_p \star
\Sigma_q$.  Defining $\Sigma_p$ and $\Sigma_q$ by
Eq.~\eqref{eq:RedhefferProductParts} and assuming that the matrices
Eq.~\eqref{eq:fullInvertibilityCondition} together with $\Delta_2 := I
- D_{q_{11}}D_{p_{22}}$ are invertible, we get from
Eq.~\eqref{eq:chain} and Eq.~\eqref{eq:RedhefferCascadeThm} the
expression
\begin{equation} \label{eq:RedhefferStateSpace}
  \Sigma_p \star \Sigma_q = \bbm{\bbm{A_{11} & A_{12} \\ A_{21} & A_{22}} & \bbm{B_{11} & B_{12} \\ B_{21} & B_{22}}
    \\ \bbm{C_{11} & C_{12} \\ C_{21} & C_{22}} & \bbm{D_{11} & D_{12} \\ D_{21} & D_{22}} }
\end{equation}
where the component block matrices are given by
\begin{equation}
  \label{eq:RedhefferComponents}
  \begin{aligned}
    \bbm{A_{11} & A_{12} \\ A_{21} & A_{22}} & := \bbm{A_p + B_{p_2}\Delta_2^{-1}D_{q_{11}}C_{p_2} & B_{p_2}\Delta_2^{-1}C_{q_1} \\
      B_{q_1}\Delta_1^{-1}C_{p_2} & A_q
      +B_{q_1}\Delta_1^{-1}D_{p_{22}}C_{q_1}} \\
    \bbm{B_{11} & B_{12} \\ B_{21} & B_{22}} & := \bbm{B_{p_1} + B_{p_2}\Delta_2^{-1}D_{q_{11}}D_{p_{21}} & B_{p_2}\Delta_2^{-1}D_{q_{12}}\\
      B_{q_1}\Delta_1^{-1}D_{p_{21}}& B_{q_2}
      +B_{q_1}\Delta_1^{-1}D_{p_{22}}D_{q_{12}}}\\
    \bbm{C_{11} & C_{12} \\ C_{21} & C_{22}} & :=  \bbm{C_{p_1}+D_{p_{12}} \Delta_2^{-1} D_{q_{11}} C_{p_2} & D_{p_{12}}\Delta_2^{-1}C_{q_{1}} \\
      D_{q_{21}}\Delta_1^{-1}C_{p_2} & C_{q_2} + D_{q_{21}}\Delta_1^{-1}D_{p_{22}}C_{q_1} }\\
    \bbm{D_{11} & D_{12} \\ D_{21} & D_{22}} & := \bbm{D_{p_{11}} + D_{p_{12}}\Delta_2^{-1} D_{q_{11}} D_{p_{21}} & D_{p_{12}}\Delta_2^{-1}D_{q_{12}}\\
        D_{q_{21}}\Delta_1^{-1}D_{p_{21}} & D_{q_{22}} + D_{q_{21}}\Delta_1^{-1}D_{p_{22}}D_{q_{12}}}.
  \end{aligned}
\end{equation}
Observe that $\Delta_2^{-1} D_{q_{11}} = D_{q_{11}} \Delta_1^{-1}$ and
$\Delta_1^{-1} D_{p_{22}} = D_{p_{22}} \Delta_2^{-1}$ if both
$\Delta_1$ and $\Delta_2$ are invertible. This amounts to some
non-uniqueness in how Eqs.~\eqref{eq:RedhefferComponents} can be
written.
\begin{remark} \label{DiagonalityInheritableRem}
  It is worth noting that for two systems $\Sigma_p$ and $\Sigma_q$
  with (block) diagonal feedthroughs (consistent with
  Standing~Assumption~\ref{SecondSA}), also $\Sigma_p \star \Sigma_q$
  inherits the same (block) diagonality.
\end{remark}
The first observation on Eq.~\eqref{eq:RedhefferComponents} is that
the formulas are well-defined even if the matrices $D_{p_{21}}$ and
$D_{q_{21}}$ were not invertible. These invertibility assumptions are
only required for computing the realisations $\Chain(\Sigma_p)$ and
$\Chain(\Sigma_q)$ which is, in fact, not necessary for describing the
Redheffer feedback connection itself.  Conversely, even if both
$\Chain(\Sigma_p)$ and $\Chain(\Sigma_q)$ did exists, the existence of
the linear system $\Sigma_p \star \Sigma_q$ is not always guaranteed
through
Eqs.~\eqref{eq:RedhefferStateSpace}--\eqref{eq:RedhefferComponents}
since the invertibility of $\Delta_1$ and $\Delta_2 = I -
D_{q_{11}}D_{p_{22}}$ is, in addition, required.
\begin{defn} \label{WellPosedFBDef}
  The feedback loop in Fig.~\ref{fig:RedhefferDef} is
  \emph{well-posed} if both of the matrices $\Delta_1 = I - D_{p_{22}}
  D_{q_{11}}$ and $\Delta_2 = I - D_{q_{11}}D_{p_{22}}$ in
  Eq.~\eqref{eq:fullInvertibilityCondition} are invertible.
\end{defn}

It is clear from
Eq.~\eqref{eq:RedhefferStateSpace}--\eqref{eq:RedhefferComponents}
that the mappings from the external inputs $(u_1, \tilde{u}_2)$ to
external outputs $(y_1, \tilde{y}_2)$ in Fig.~\ref{fig:RedhefferDef}
are well-posed (in the sense that their relation can be represented by
a transfer function of a finite-dimensional linear system) for
invertible $\Delta_1$ and $\Delta_2$.  We may add external
perturbations to the feedback loop by setting $u_2 = \tilde{y}_1 +
v_2$ and $\tilde{u}_1 = y_2 + \tilde{v}_1$, and read the internal
outputs $\tilde{y}_1$ and $y_2$.  Even now the mapping $(u_1, v_2,
\tilde{v}_1, \tilde{u}_2) \mapsto (y_1, y_2, \tilde{y}_1,
\tilde{y}_2)$ is similarly well-posed for invertible $\Delta_1$ and
$\Delta_2$.

We need economical conditions to check the well-posedness of the
feedback loop.
\begin{lemma} \label{RedhefferNecThm} 
  Make the same assumption and use the same notation as in
  Theorem~\ref{RedhefferCascadeThm}.  Necessary conditions for the
  well-posedness of the feedback loop in Fig.~\ref{fig:RedhefferDef}
  are that
  \begin{enumerate}
  \item \label{RedhefferNecThmClaim1}
    the matrices  $D_{p_{22}}$ and $D_{q_{11}}$ satisfy
    $\norm{D_{p_{22}}} + \norm{D_{q_{11}}} < 2$; or
  \item \label{RedhefferNecThmClaim2} 
    one of the matrices $\Delta_1$ and $\Delta_2$ is invertible.
  \end{enumerate}
\end{lemma}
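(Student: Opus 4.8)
The plan is to prove the (slightly stronger) statements that (ii) is \emph{equivalent} to well-posedness and that (i) is \emph{sufficient} for it; the asserted necessity then follows immediately, since by Definition~\ref{WellPosedFBDef} a well-posed loop has both $\Delta_{1}$ and $\Delta_{2}$ invertible and hence satisfies (ii). The only tools I would need are submultiplicativity of the operator norm, the elementary bound $ab\le\bigl(\tfrac{a+b}{2}\bigr)^{2}$ for $a,b\ge 0$, the Neumann series, and the determinant identity $\det(I-XY)=\det(I-YX)$.

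For (ii): I would first read off from Fig.~\ref{fig:RedhefferDef} that the coupled internal signal pairs $(u_{2},\tilde y_{1})$ and $(\tilde u_{1},y_{2})$ force the feedthrough blocks $D_{p_{22}}$ and $D_{q_{11}}$ to have compatible sizes, so that $D_{p_{22}}D_{q_{11}}$ and $D_{q_{11}}D_{p_{22}}$ are square matrices to which the determinant identity applies. This gives $\det\Delta_{1}=\det\Delta_{2}$, so $\Delta_{1}$ is invertible if and only if $\Delta_{2}$ is; therefore invertibility of \emph{one} of $\Delta_{1},\Delta_{2}$ is the same as invertibility of \emph{both}, which is precisely well-posedness. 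In particular well-posedness implies (ii).

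For (i): I would estimate that if $\norm{D_{p_{22}}}+\norm{D_{q_{11}}}<2$ then, by the arithmetic--geometric mean inequality, $\norm{D_{p_{22}}}\,\norm{D_{q_{11}}}<1$, hence $\norm{D_{p_{22}}D_{q_{11}}}<1$ and $\norm{D_{q_{11}}D_{p_{22}}}<1$ by submultiplicativity; the associated Neumann series then furnish the inverses of $\Delta_{1}=I-D_{p_{22}}D_{q_{11}}$ and $\Delta_{2}=I-D_{q_{11}}D_{p_{22}}$, so the loop is well-posed and, in particular, (i) $\Rightarrow$ (ii). Assembling the two cases: a well-posed loop always satisfies (ii), so at least one of (i), (ii) holds, which is the assertion.

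The hard part: there is essentially no substantive obstacle here. The only point requiring care is the bookkeeping of which feedthrough sub-blocks are fed back into one another in Fig.~\ref{fig:RedhefferDef} (fixed by the conventions around Eq.~\eqref{eq:fullInvertibilityCondition}), so that the determinant identity and the norm estimate are applied to the products $D_{p_{22}}D_{q_{11}}$ and $D_{q_{11}}D_{p_{22}}$ and not to a different pairing; after that, each case is a one-line argument.
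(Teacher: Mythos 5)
Your proposal is correct and follows the same overall route as the paper: both claims reduce to showing that $\Delta_1$ and $\Delta_2$ are simultaneously invertible or simultaneously singular, plus a norm estimate for claim (i). Two small differences are worth recording. For claim (\ref{RedhefferNecThmClaim2}) the paper argues via kernels: if $\Delta_1 u = 0$ with $u \neq 0$ then $v := D_{q_{11}}u \neq 0$ and $\Delta_2 v = 0$, so $\Null{\Delta_1} \neq \{0\}$ iff $\Null{\Delta_2} \neq \{0\}$; you instead invoke $\det(I - XY) = \det(I - YX)$. Both are one-line arguments and equally valid for the (possibly rectangular) blocks $D_{p_{22}}$ and $D_{q_{11}}$, whose products in either order are square by the coupling constraints you correctly identify. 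For claim (\ref{RedhefferNecThmClaim1}) your argument is actually a little tighter than the paper's: the paper deduces $\norm{D_{p_{22}}D_{q_{11}}} < 1$ from the statement that ``both of the matrices are contractive, and at least one of them strictly so,'' which tacitly imports $\norm{D_{p_{22}}}, \norm{D_{q_{11}}} \leq 1$ from the scattering-passive setting in which the lemma is later applied, whereas your AM--GM step $\norm{D_{p_{22}}}\,\norm{D_{q_{11}}} \leq \bigl(\tfrac{\norm{D_{p_{22}}} + \norm{D_{q_{11}}}}{2}\bigr)^2 < 1$ derives the same bound from the stated hypothesis alone. Your reading of the lemma's slightly awkward phrasing (condition (ii) is equivalent to well-posedness, condition (i) is sufficient for it, hence both are ``necessary'' in the disjunctive sense stated) matches what the paper's proof actually establishes.
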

\begin{proof}
  Claim~\eqref{RedhefferNecThmClaim1} follows because both of the
  matrices are contractive, and at least one of them strictly so. Thus
  $\norm{D_{p_{22}}D_{q_{11}}} < 1$ and $\norm{D_{q_{11}}D_{p_{22}}} <
  1$ implying the invertibility of $\Delta_1$ and $\Delta_2$ by the
  usual Neumann series argument.

  Claim~\eqref{RedhefferNecThmClaim2} follows by showing that
  $\Delta_1$ is invertible if and only if $\Delta_2$ is invertible.
  Assume that the square matrix $\Delta_1$ is not invertible, i.e.,
  $\Delta_1 u = u - D_{p_{22}} D_{q_{11}} u = 0$ for some $u \neq
  0$. Thus $u = D_{p_{22}} D_{q_{11}} u$, and it follows that $v :=
  D_{q_{11}} u \neq 0$. Now $v = D_{q_{11}} D_{p_{22}} D_{q_{11}} u =
  D_{q_{11}} D_{p_{22}} v$ and hence $\Delta_2 v = 0$.  We have shown
  that $\Null{\Delta_1} \neq \{ 0 \}$ if and only if $\Null{\Delta_2}
  \neq \{ 0 \}$ which completes the proof.
\end{proof}

\begin{prop} \label{RedhefferSystemPassiveProp}
  Let $\Sigma_p$ and $\Sigma_q$ be two scattering passive
  [conservative] systems such that the feedback loop in
  Fig.~\ref{fig:RedhefferDef} is well-posed. Then the Redheffer star
  product $\Sigma_p \star \Sigma_q$ given by
  Eqs.~\eqref{eq:RedhefferStateSpace}-- \eqref{eq:RedhefferComponents}
  is a scattering passive [respectively, conservative] system.
\end{prop}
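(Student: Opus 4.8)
The plan is to argue directly with the scattering energy balance rather than with matrix inequalities. First I would record the underlying state-space picture. Writing $n_p$ and $n_q$ for the state dimensions of $\Sigma_p$ and $\Sigma_q$, the realisation $\Sigma_p \star \Sigma_q$ in Eqs.~\eqref{eq:RedhefferStateSpace}--\eqref{eq:RedhefferComponents} has state space $\R^{n_p} \times \R^{n_q}$ with state $x = \sbm{x_p \\ x_q}$. Given a sufficiently smooth pair of external inputs $(u_1, \tilde u_2)$ (as in Standing Assumption~\ref{FirstSA}), the invertibility of $\Delta_2 = I - D_{q_{11}} D_{p_{22}}$ lets one solve the algebraic feedback loop: combining the feedthrough relations $y_2 = C_{p_2} x_p + D_{p_{21}} u_1 + D_{p_{22}} u_2$ and $\tilde y_1 = C_{q_1} x_q + D_{q_{11}} \tilde u_1 + D_{q_{12}} \tilde u_2$ with the coupling constraints $u_2 = \tilde y_1$ and $\tilde u_1 = y_2$ yields $\Delta_2 u_2 = C_{q_1} x_q + D_{q_{11}}(C_{p_2} x_p + D_{p_{21}} u_1) + D_{q_{12}} \tilde u_2$ and, analogously, $\Delta_1 \tilde u_1 = C_{p_2} x_p + D_{p_{22}}(C_{q_1} x_q + D_{q_{12}} \tilde u_2) + D_{p_{21}} u_1$. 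Hence $u_2$ and $\tilde u_1$ are affine functions of $(x_p, x_q, u_1, \tilde u_2)$, the trajectory $x(\cdot)$ solving \eqref{eq:RedhefferStateSpace} is as smooth as the external inputs, and a block-wise comparison with \eqref{eq:RedhefferComponents} shows that $x_p(\cdot)$ is a genuine state trajectory of $\Sigma_p$ with input $\sbm{u_1 \\ u_2}$ and output $\sbm{y_1 \\ y_2}$, and $x_q(\cdot)$ one of $\Sigma_q$ with input $\sbm{\tilde u_1 \\ \tilde u_2}$ and output $\sbm{\tilde y_1 \\ \tilde y_2}$, the coupling constraints being satisfied; this is the state-space counterpart of Claim~\eqref{RedhefferCascadeThmClaim3} of Theorem~\ref{RedhefferCascadeThm}.

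Given this, scattering passivity of $\Sigma_p \star \Sigma_q$ drops out. Adding the energy inequalities \eqref{eq:ScatteringEnergyInEq} for $\Sigma_p$ and $\Sigma_q$ gives
\[
\frac{d}{dt}\left(\norm{x_p}^2 + \norm{x_q}^2\right) \leq \norm{u_1}^2 + \norm{u_2}^2 + \norm{\tilde u_1}^2 + \norm{\tilde u_2}^2 - \norm{y_1}^2 - \norm{y_2}^2 - \norm{\tilde y_1}^2 - \norm{\tilde y_2}^2 .
\]
Using $u_2 = \tilde y_1$ and $\tilde u_1 = y_2$, the term $\norm{u_2}^2$ cancels $-\norm{\tilde y_1}^2$ and $\norm{\tilde u_1}^2$ cancels $-\norm{y_2}^2$, leaving
\[
\frac{d}{dt}\norm{x(t)}^2 \leq \norm{u_1(t)}^2 + \norm{\tilde u_2(t)}^2 - \norm{y_1(t)}^2 - \norm{\tilde y_2(t)}^2 ,
\]
which is exactly \eqref{eq:ScatteringEnergyInEq} for $\Sigma_p \star \Sigma_q$ with input $\sbm{u_1 \\ \tilde u_2}$ and output $\sbm{y_1 \\ \tilde y_2}$. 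In the conservative case each of the two added inequalities is an equality, so the final line is an equality and $\Sigma_p \star \Sigma_q$ is scattering conservative.

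The only genuine work is the well-posedness bookkeeping of the first paragraph — checking that the loop is uniquely solvable with the regularity needed for the component energy inequalities to apply along classical solutions, and that the combined trajectory restricts to valid trajectories of the two factors. This is routine once $\Delta_1$ (equivalently $\Delta_2$, by Claim~\eqref{RedhefferNecThmClaim2} of Lemma~\ref{RedhefferNecThm}) is invertible. An alternative, matrix-only route would pass to the internal Cayley transforms and invoke Propositions~\ref{ScatteringPassiveProp} and~\ref{ScatteringPassivePropDiscrete}, showing that the discrete-time contraction property \eqref{eq:ScatteringPassivePropDiscrete} is inherited through the chain/cascade construction of Theorem~\ref{RedhefferCascadeThm}; but the time-domain energy argument above is shorter and makes the cancellation transparent.
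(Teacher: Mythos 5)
Your proposal is correct and follows essentially the same route as the paper's proof: identify the combined trajectory of $\Sigma_p \star \Sigma_q$ with a pair of valid trajectories of $\Sigma_p$ and $\Sigma_q$ coupled by $u_2 = \tilde y_1$, $\tilde u_1 = y_2$, then add the two scattering energy inequalities \eqref{eq:ScatteringEnergyInEq} and cancel the internal signals. The only cosmetic differences are that the paper works with the integrated form of the energy inequality over $[0,T]$ while you use the differential form, and that you spell out the $\Delta_1, \Delta_2$ loop-solving step that the paper dismisses as ``a fairly long computation.''
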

\begin{proof}
  Let $\sbm{u_1 \\ \tilde{u}_2}$ be a twice continuously
  differentiable input signal and $\sbm{x_{p_0} \\ x_{q_0}}$ an
  initial state of suitable dimensions for $\Sigma_p \star \Sigma_q$.
  It can be seen by a fairly long computation that the Redheffer star
  product in
  Eqs.~\eqref{eq:RedhefferStateSpace}--\eqref{eq:RedhefferComponents}
  is so defined that the output signals $y_1, \tilde{y}_2$ and the
  state trajectories $x_p, x_q$ in the dynamical equations
\begin{equation*}
  \bbm{x_p'(t) \\ \bbm{y_1(t) \\ y_2(t)}} 
  = \Sigma_p \bbm{x_p(t) \\ \bbm{u_1(t) \\ u_2(t)}} \quad \text{ and } \quad 
  \bbm{x_q'(t) \\ \bbm{\tilde{y}_1(t) \\ \tilde{y}_2(t)}} 
  = \Sigma_q \bbm{x_q(t) \\ \bbm{\tilde{u}_1(t) \\ \tilde{u}_2(t)}} \quad \text{ for } t > 0 
\end{equation*}
with the initial conditions $x_p(0) = x_{p_0}$, $x_q(0) = x_{q_0}$ and
coupling equations $u_2 = \tilde{y}_1, \tilde{u}_1 = y_2$ in
Fig.~\ref{fig:RedhefferDef} are equivalent with the dynamical
equations
\begin{equation*}
  \bbm{\bbm{x_p'(t) \\ x_q'(t)} \\ \bbm{ y_1(t) \\ \tilde{y}_2(t)}} 
  = \Sigma_p \star \Sigma_q  \bbm{\bbm{x_p(t) \\ x_q(t)} \\ \bbm{u_1(t) \\ \tilde{u}_2(t)}} \quad \text{ for } t > 0 
\end{equation*}
with the initial condition $\sbm{x_p(0) \\ x_q(0)} = \sbm{x_{p_0}
  \\ x_{q_0}}$.  Since both $\Sigma_p$ and $\Sigma_q$ are assumed to
be scattering passive, their integrated energy inequalities
\begin{equation*}
\begin{aligned}
  \norm{x_p(T)}^2 - \norm{x_{p_0}}^2 & \leq \int_0^T { \left ( \norm{u_1(t)}^2 + \norm{u_2(t)}^2 - \norm{y_1(t)}^2 - \norm{y_2(t)}^2 \right ) \, dt} \quad \text{ and } \\
  \norm{x_q(T)}^2 - \norm{x_{q_0}}^2 & \leq \int_0^T { \left ( \norm{\tilde{u}_1(t)}^2 + \norm{\tilde{u}_2(t)}^2 - \norm{\tilde{y}_1(t)}^2 - \norm{\tilde{y}_2(t)}^2  \right ) \, dt}
\end{aligned}
\end{equation*}
hold for all $T$; see Eq.~\eqref{eq:ScatteringEnergyInEq}. Adding
these inequalities and using the coupling equations to cancel out
the internal signals gives
\begin{equation*}
  \norm{\bbm{x_p(T)\\x_q(T) }}^2 - \norm{\bbm{x_{p_0} \\ x_{q_0}}}^2  
    \leq \int_0^T { \left ( \norm{\bbm{u_1(t) \\ \tilde{u}_2(t)}}^2 -  \norm{\bbm{y_1(t) \\ \tilde{y}_2(t)}}^2  \right ) \, dt}
\end{equation*}
which proves passivity. The proof for conservativity follows by
replacing all inequalities in proof with equalities.
\end{proof}

Properly impedance passive systems can now be treated through their
external Cayley transforms.
\begin{thm} \label{RedhefferScatteringThm} 
  Let $\Sigma_{p,i}$ and $\Sigma_{q,i}$ be impedance passive systems
  such that the signals are (dimensionally) compatible as shown in
  Fig.~\ref{fig:RedhefferDef}. Assume that at least one
    of systems $\Sigma_{p,i}$ and $\Sigma_{q,i}$ is properly impedance
    passive.  Define the external
  Cayley transformed scattering passive systems
  \begin{equation*}
    \Sigma_{p} :=  \Sigma_{p,i}(R_p) \quad \text{ and } \quad  \Sigma_{q} :=  \Sigma_{q,i}(R_q)
  \end{equation*}
  where $R_p = \sbm{ R_{p,1} & 0 \\ 0 & R_{p,2}}$, $R_q = \sbm{
    R_{q,1} & 0 \\ 0 & R_{q,2}}$ satisfying $R_{p,2} = R_{q,1}$ are
  invertible, positive resistance matrices. Then the following holds:
  \begin{enumerate}
  \item \label{RedhefferScatteringThmClaim1} The internal feedback
    loop in Fig.~\ref{fig:RedhefferDef} is well-posed, and the
    Redheffer star product $\Sigma_p \star \Sigma_q$ given by
    Eqs.~\eqref{eq:RedhefferStateSpace}--\eqref{eq:RedhefferComponents}
    exists.
  \item \label{RedhefferScatteringThmClaim2} There exists an impedance
    passive system $\Sigma_i'$ whose external Cayley transform
    satisfies $\Sigma'(R_{p,q}) = \Sigma_p \star \Sigma_q$ with
    $R_{p,q} := \sbm{ R_{p,1} & 0 \\ 0 & R_{q,2}}$.
  \end{enumerate}
\end{thm}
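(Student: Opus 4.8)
The plan is to transport the whole question to the scattering setting, where Proposition~\ref{RedhefferSystemPassiveProp} already does most of the work. After relabelling if necessary, I would assume that $\Sigma_{p,i}$ is the properly impedance passive factor (the case of $\Sigma_{q,i}$ being the proper one is entirely analogous, after the obvious interchange of the two ports within each system). Then $\Sigma_p = \Sigma_{p,i}(R_p)$ is scattering passive with $\norm{D_p} < 1$ by Theorem~\ref{ProperlyImpPassThm}, while $\Sigma_q = \Sigma_{q,i}(R_q)$ is scattering passive by Proposition~\ref{ExtCayleyPassivityProp}, so $\norm{D_q} \le 1$. Since a sub-block of a matrix has operator norm no larger than the whole matrix, $\norm{D_{p_{22}}} + \norm{D_{q_{11}}} < 2$, and so by Claim~\eqref{RedhefferNecThmClaim1} of Lemma~\ref{RedhefferNecThm} the matrices $\Delta_1$ and $\Delta_2$ are invertible, i.e., the feedback loop is well-posed. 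Hence $\Sigma_p \star \Sigma_q$ is well-defined by Eqs.~\eqref{eq:RedhefferStateSpace}--\eqref{eq:RedhefferComponents}, and it is scattering passive by Proposition~\ref{RedhefferSystemPassiveProp}; this is Claim~\eqref{RedhefferScatteringThmClaim1}.

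For Claim~\eqref{RedhefferScatteringThmClaim2} I would recover $\Sigma_i'$ as the inverse external Cayley transform of $\Sigma_p \star \Sigma_q$ with resistance $R_{p,q}$. The crux is to show that the feedthrough $D = \sbm{D_{11} & D_{12} \\ D_{21} & D_{22}}$ of $\Sigma_p \star \Sigma_q$ satisfies $1 \notin \sigma(D)$, i.e., that $I - D$ is invertible. Granting this, Eq.~\eqref{eq:scat2imp} with $R = R_{p,q}$ defines an impedance system $\Sigma_i'$, which is impedance passive by the implication ``scattering passive $\Rightarrow$ impedance passive'' of Proposition~\ref{ExtCayleyPassivityProp}, and $\Sigma'(R_{p,q}) = \Sigma_p \star \Sigma_q$ since Eqs.~\eqref{eq:imp2scat} and~\eqref{eq:scat2imp} are mutually inverse (Proposition~\ref{FInvProp}).

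To prove $1 \notin \sigma(D)$, suppose $(I-D)\sbm{u_1 \\ \tilde u_2} = 0$. Reading this through the internal feedback loop at zero state (the algebra that produces the $D$-block of Eqs.~\eqref{eq:RedhefferComponents}) yields feedthrough-level signals: inputs $u_1, u_2$ and outputs $y_1, y_2$ of $\Sigma_p$, inputs $\tilde u_1, \tilde u_2$ and outputs $\tilde y_1, \tilde y_2$ of $\Sigma_q$, with the coupling $u_2 = \tilde y_1$, $\tilde u_1 = y_2$, together with $y_1 = u_1$ and $\tilde y_2 = \tilde u_2$. The scattering-passivity inequalities of $\Sigma_p$ and $\Sigma_q$ at zero state reduce, after these substitutions, to $\norm{\tilde u_1} \le \norm{u_2}$ and $\norm{u_2} \le \norm{\tilde u_1}$, so both must hold with equality; in particular $\norm{D_p \sbm{u_1 \\ u_2}} = \norm{\sbm{u_1 \\ u_2}}$, which forces $\sbm{u_1 \\ u_2} = 0$ because $\norm{D_p} < 1$. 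Then $y_2 = 0$, so $\tilde u_1 = 0$, and the remaining feedthrough relations give $D_{q_{12}}\tilde u_2 = u_2 = 0$ and $D_{q_{22}}\tilde u_2 = \tilde u_2$; hence $\sbm{0 \\ \tilde u_2}$ is a fixed vector of $D_q$, and since $1 \notin \sigma(D_q)$ by Proposition~\ref{AnyROKProp}, we get $\tilde u_2 = 0$. Therefore $\sbm{u_1 \\ \tilde u_2} = 0$, so $I-D$ is injective and hence invertible.

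The bookkeeping of which sub-block and which inequality enters where is dictated by Fig.~\ref{fig:RedhefferDef} and the coupling equations there, so the chain of implications is essentially forced once those identifications are in place. I expect the third step to be the only delicate point: the equality-case analysis for $1 \notin \sigma(D)$ genuinely needs both the strict contractivity $\norm{D_p} < 1$ coming from proper passivity of one factor and the weaker, but indispensable, fact $1 \notin \sigma(D_q)$ valid for the external Cayley transform of every impedance passive system. The well-posedness in Claim~\eqref{RedhefferScatteringThmClaim1} is, by comparison, a one-line consequence of Lemma~\ref{RedhefferNecThm}.
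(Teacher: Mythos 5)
Your proposal is correct and follows essentially the same route as the paper: well-posedness via $\norm{D_p}<1$ (Theorem~\ref{ProperlyImpPassThm}) together with Lemma~\ref{RedhefferNecThm}, and then the fixed-point analysis of the feedthrough $D$ to show $1\notin\sigma(D)$ so that the inverse external Cayley transform yields $\Sigma_i'$. Your equality-case step is in fact slightly more careful than the paper's, which jumps directly to a strict inequality (implicitly assuming $\sbm{u_1\\ u_2}\neq 0$) and does not need the $1\notin\sigma(D_q)$ argument that you invoke to dispose of the degenerate case.
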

\begin{proof}
  We write $\Sigma_p = \sbm{A_p&B_p\\C_p&D_p}$ and $\Sigma_q = \sbm{A_q
    & B_q \\ C_q & D_q}$ where
  \begin{equation*}
  D_p = \bbm{D_{p_{11}} & D_{p_{12}} \\ D_{p_{21}} & D_{p_{22}}} \quad
  \text{ and } \quad D_q = \bbm{D_{q_{11}} & D_{q_{12}} \\ D_{q_{21}}
    & D_{q_{22}}}.
  \end{equation*}
  To fix notions, we assume that $\Sigma_{p,i}$ properly impedance
  passive.

  Claim~\eqref{RedhefferScatteringThmClaim1}: We have $\norm{D_p } <
  1$ by Theorem~\ref{ProperlyImpPassThm} and $\norm{D_q } \leq 1$ by
  passivity.  The claim follow from claim
  \eqref{RedhefferNecThmClaim1} of Lemma~\ref{RedhefferNecThm}.

  Claim~\eqref{RedhefferScatteringThmClaim2}: Considering the
  feedthrough operator $D := \sbm{D_{11} & D_{12} \\ D_{21} & D_{22}}$
  of $\Sigma_p \star \Sigma_q$ in Eq.~\eqref{eq:RedhefferComponents},
  we see that $D \sbm{u_1 \\ \tilde{u}_2} = \sbm{y_1 \\ \tilde{y}_2}$
  is equivalent with
  \begin{equation} \label{RedhefferScatteringThmEq1} 
   \bbm{y_1 \\ y_2} = D_p \bbm{u_1 \\ u_2}, \quad \bbm{\tilde{y}_1
     \\ \tilde{y}_2} = D_q \bbm{\tilde{u}_1 \\ \tilde{u}_2}, \quad u_2
   = \tilde{y}_1 \quad \text{ and } \quad \tilde{u}_1 = y_2
  \end{equation}
  following the feedback configuration of
  Fig.~\ref{fig:RedhefferDef}. For contradiction, assume that $I - D$
  is not invertible. Then we must have $D \sbm{u_1 \\ \tilde{u}_2} =
  \sbm{u_1 \\ \tilde{u}_2}$ for some non-vanishing vector $\sbm{u_1
    \\ \tilde{u}_2}$.  This together with
  Eq.~\eqref{RedhefferScatteringThmEq1} gives
  \begin{equation} \label{RedhefferScatteringThmEq2} 
   \bbm{u_1 \\ \tilde{u}_1} = D_p \bbm{u_1 \\ u_2}, \quad \text{ and }
   \quad \bbm{u_2 \\ \tilde{u}_2} = D_q \bbm{\tilde{u}_1
     \\ \tilde{u}_2}.
  \end{equation}
  Since $\Sigma_{p,i}$ is properly impedance passive,
  Theorem~\ref{ProperlyImpPassThm} implies that $\norm{D_p} < 1$.
  Since $\Sigma_{q,i}$ is impedance passive, the system $\Sigma_q$ is
  scattering passive satisfying $\norm{D_q} \leq 1$. Thus,
  Eq.~\eqref{RedhefferScatteringThmEq2} implies
  \begin{equation*} 
    \norm{u_1}^2 + \norm{\tilde{u}_1}^2 < \norm{u_1}^2 + \norm{u_2}^2
    \quad \text{ and } \quad \norm{u_2}^2 + \norm{\tilde{u}_2}^2 \leq
    \norm{\tilde{u}_1}^2 + \norm{\tilde{u}_2}^2.
  \end{equation*}
  Hence $\norm{\tilde{u}_1}^2 < \norm{u_2}^2$ and $\norm{u_2}^2 \leq
  \norm{\tilde{u}_1}^2$ which is impossible. We conclude that $I-D$ is
  invertible, and $\Sigma_p \star \Sigma_q$ is an external Cayley
  transform of some $\Sigma_i'$ by Eq.~\eqref{eq:scat2imp}.  Moreover,
  the system $\Sigma_p \star \Sigma_q$ is scattering passive by
  Proposition~\ref {RedhefferSystemPassiveProp}, and the impedance
  passivity of $\Sigma_i'$ follows from
  Proposition~\ref{ExtCayleyPassivityProp}.
\end{proof}

\section{\label{RegSec} Regularisation}

In finite dimensions, an impedance conservative system $\Sigma_i =
\sbm{A_i &B_i \\ C_i & D_i}$ is characterised by
\begin{equation*}
  A_i + A_i^T = 0, \quad B_i^T = C_i, \quad \text{ and } \quad D_i +
  D_i^T = 0
\end{equation*}
by Proposition~\ref{ImpPassiveProp}.  Because $D_i$ does not appear in
the first two equations, the system $\Sigma' := \sbm{A_i &B_i \\ C_i &
  0}$ is impedance conservative if $\Sigma_i$ is. A circuit theory
example satisfying $D_i = 0$ is provided in
Section~\ref{piTopologyExample} below. Thus, the invertibility of
$D_i$, or any of its parts in splitting $D_i = \sbm{D_{i_{11}} &
  D_{i_{12}} \\ D_{i_{21}} & D_{i_{22}}}$, is not a generic property
of impedance conservative systems. Unfortunately, some kind of
invertibility is required for computing the external reciprocal
transform $\left ( \Sigma_i \right )_f$ by Eq.~\eqref{eq:invsystem},
hybrid transform $\Sigma_h$ by Eq.~\eqref{eq:hybrid}, or the chain
transform of $\Sigma_h$ or $\Sigma(R)$ where $\Sigma(R)$ is the
external Cayley transform of $\Sigma_i$; see
Proposition~\ref{ChainApplicableProp}.  Even though $\Sigma(R)$ exists
and is scattering passive for any impedance passive $\Sigma_i$ and $R
> 0$, the Redheffer product of two such systems may fail to be defined
unless the conditions of Lemma~\ref{RedhefferNecThm} are
satisfied. Indeed, if $D_i = 0$ and $\Sigma(R) = \sbm{A & B \\ C &
  D}$, then $D = -I$ by Eqs.~\eqref{eq:imp2scat} which is an
ingredient of a non-well-posed feedback loop.

To compute feedback systems consisting of general impedance passive
systems $\Sigma_i$, one could take one of these approaches:
\begin{enumerate} 
  \item The external Cayley transform $\Sigma(R)$ of $\Sigma_i$ is
    regularised so as to make the Redheffer star products of such
    similar systems feasible while preserving scattering passivity.
  \item The system $\Sigma_i$ is regularised in a way that preserves
    impedance passivity, so that the hybrid transform $\Sigma_h$ and
    the Redheffer star products of such similar systems are feasible.
\end{enumerate}
Both of these approaches can be taken by using a \emph{shift and
  invert procedure} on $\Sigma_i$.  The first step is the replacement
of $D_i$ by $D_i + \varepsilon I$ for some, purely resistive
perturbation $\varepsilon > 0$. Clearly $\Sigma_i(\varepsilon) :=
\sbm{A_i & B_i \\ C_i & D_i + \varepsilon I}$ is \emph{properly}
impedance passive if $\Sigma_i$ is impedance passive by
Proposition~\ref{ImpPassiveProp}. The second inversion step may be any
of the following: \rm{(i)} the external Cayley transformation,
\rm{(ii)} the hybrid transformation, or even \rm{(iii)} the external
reciprocal transformation, depending on what kind of system is
desirable for modelling purposes. In this article, we concentrate on
the external Cayley transformation of $\Sigma_i(\varepsilon)$, given
by
\begin{equation} \label{ExtCaleyRegularisedEq}
  \Sigma(R, \varepsilon) = \bbm{A_i - B_i \left (D_i + \varepsilon I + R
    \right )^{-1} C_i & \sqrt{2} B_i \left (D_i + \varepsilon I + R
    \right )^{-1} R^{1/2} \\ \sqrt{2} R^{1/2} \left (D_i + \varepsilon I
    + R \right )^{-1} C_i & I - 2 R^{1/2} \left ( D_i + \varepsilon I + R
    \right )^{-1} R^{1/2}}.
\end{equation}
By Theorem~\ref{RedhefferScatteringThm}, the Redheffer star product
$\Sigma_p \star \Sigma(R, \varepsilon)$ is well-defined for all
$\varepsilon > 0$ and scattering passive systems $\Sigma_p$ such that
the feedback loop in Fig.~\ref{fig:RedhefferDef} is possible with
$\Sigma_q = \Sigma(R, \varepsilon)$.

\begin{remark} \label{LimitObjectRem}
We have $\lim_{\epsilon \to 0+}{\Sigma(R, \varepsilon)} = \Sigma(R)$
from Eq.~\eqref{ExtCaleyRegularisedEq} where $\Sigma(R)$ is the
external Cayley transform of $\Sigma_i$.\footnote{When taking a limit
  of finite-dimensional systems, we use any of the equivalent matrix
  norms for the block matrix representing the system.} However, it is
not as straightforward to make sense of the limit object
\begin{equation} \label{LimitObjectRemEq}
  \lim_{\epsilon \to 0+}\left (\Sigma_p \star \Sigma(R, \varepsilon) \right )
\end{equation}
for the reason that the natural limit candidate $\Sigma_p \star
\Sigma(R)$ may not be well-defined by
Eqs.~\eqref{eq:RedhefferStateSpace}--\eqref{eq:RedhefferComponents}.
In this case, the matrix elements of $\Sigma_p \star \Sigma(R,
\varepsilon)$ contain nonnegative powers of $1/\varepsilon$ whose
effect on the \emph{transfer function} of $\Sigma_p \star \Sigma(R,
\varepsilon)$ may be vanishing as $\epsilon \to 0+$. There does not
seem to exist an universal method for describing the limit object in
Eq.~\eqref{LimitObjectRemEq} if the feedback loop of
Fig.~\ref{fig:RedhefferDef} is not well-posed without
regularisation. However, one special case is treated in
Section~\ref{ButterworthSec}.
\end{remark}

It is worth noting that Eqs.~\eqref{eq:RedhefferComponents} simplify
considerably if $D_i$ is (block) diagonal following the splitting of
Standing~Assumption~\ref{SecondSA}.  So as to the system given by
Example~\ref{piTopologyExample}, we get the scattering passive
realisation
\begin{equation} \label{eq:regularisedPiScattering}
  \Sigma(R, \varepsilon) =
  \bbm{ \sbm{  \frac{-1}{ C_1(R_1 + \varepsilon) } & \frac{1}{\sqrt{L_1 C_1}} & 0 \\
  \frac{-1}{\sqrt{L_1C_1}} & 0 & \frac{1}{\sqrt{L_1C_2}} \\
  0 &  \frac{1}{\sqrt{L_1C_2}} & \frac{-1}{C_2(R_2 + \varepsilon)}}&
    \sbm{\sqrt{\frac{2}{(R_1+\varepsilon)C_1}}& 0 \\ 0& 0 \\ 0& \sqrt{\frac{2}{(R_2+\varepsilon)C_2}} } \\
    \sbm{\sqrt{\frac{2}{(R_1+\varepsilon)C_1}}& 0 & 0\\ 0 & 0& \sqrt{\frac{2}{(R_2+\varepsilon)C_2}} } &
    \sbm{1 - \frac{2 R_1}{R_1 + \varepsilon}& 0 \\
      0&1 - \frac{2 R_2}{R_2 + \varepsilon}}
  } \text{ for } \varepsilon > 0 
\end{equation}
where $R = \sbm{R_1 & 0 \\ 0 & R_2}$. We see from this example that
the regularisation by shift-and-invert procedure does not make the
scattering system chain transformable if the original system is not
chain transformable.

\section{\label{ApplicationsSec} Applications}

We proceed to give two applications that illuminate the use of realisation
techniques for model synthesis. The first application concerns the state
space modelling of a passive Butterworth lowpass filter by chaining
the LC circuits described in Example~\ref{piTopologyExample}; see also
\cite[Section~13.2.5]{Antoulas:ALD:2005}.  The second, more
comprehensive application is the one considered in the introduction: an
acoustic transmission line, described by the Webster's
PDE~\eqref{eq:websterIntro}, is coupled to a load that is modelled by
an irrational impedance given in Eq.~\eqref{eq:acImpIntro}.

\subsection{\label{ButterworthSec} Passive Butterworth filter in Cauer $\pi$-topology}

\begin{figure}
    \centering
    \includegraphics[width=0.48\textwidth]{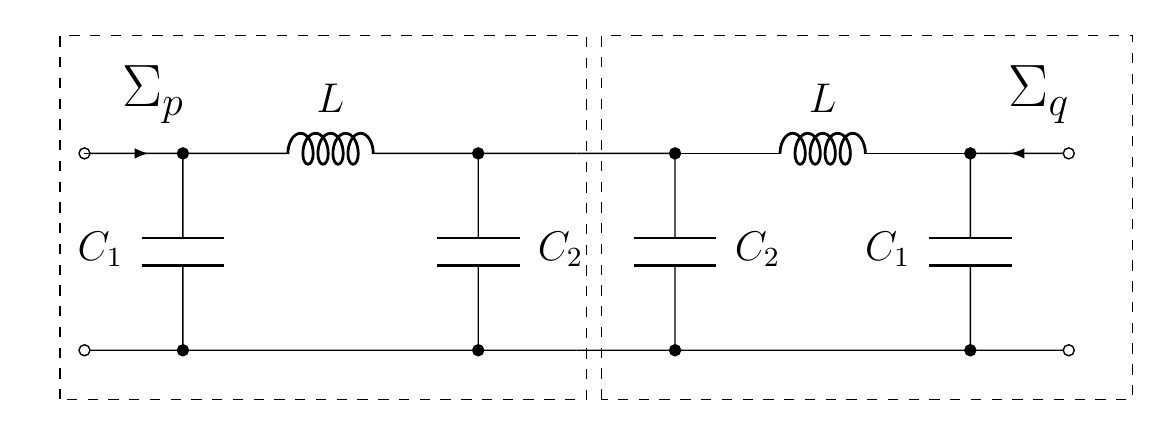}
    \includegraphics[width=0.48\textwidth]{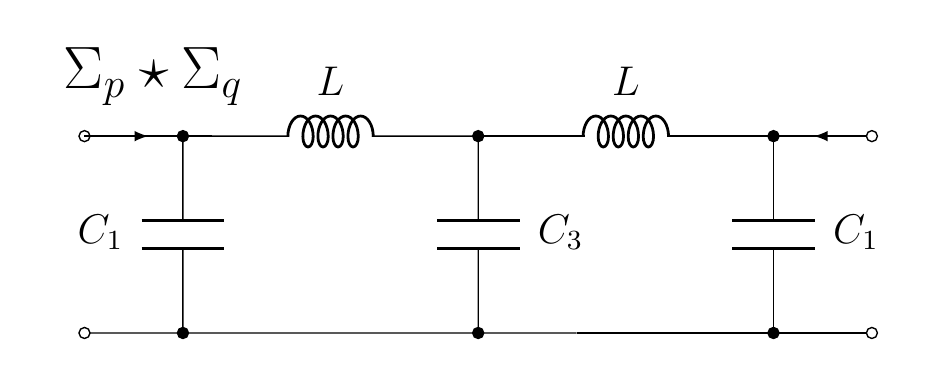}\\
    \caption{Coupling of two $\pi$-topology circuits whose state space
      realisations $\Sigma_p$ and $\Sigma_q$ are derived in
      Example~\ref{piTopologyExample} in scattering form (top
      panel). When the circuits are coupled corresponding
      \emph{formally} to the Redheffer star product $\Sigma_p \star
      \Sigma_q$, the two parallel capacitors of value $C_2$ can be
      regarded as a single capacitor of value $C_3 = 2 C_2$ (bottom
      panel).}
    \label{fig:RedhefferTwoPi}
\end{figure}
The impedance transfer function of the lossless circuit shown in
Fig.~\ref{fig:RedhefferTwoPi} is given by
\begin{equation} \label{eq:fullPiImpedance}
\begin{aligned}
  Z(s) & = \frac{1}{s \left(L C_1  s^2+1\right) \left( L C_1 C_3 s^2+2 C_1+C_3\right)}  \cdot \\
  \cdot & \bbm{L^2 C_1 C_3  s^4+  L (2   C_1 + C_3 )s^2+1 &  1 \\ 
    1 & L^2 C_1 C_3  s^4+  L (2   C_1 + C_3 )s^2+1    }.
\end{aligned}
\end{equation}
The resonant frequencies of the circuit are given by
\begin{equation}
  f_1 = \frac{1}{2 \pi \sqrt{L C_1}} \text{ and } f_2 =
  \frac{1}{2 \pi \sqrt{\tilde L \tilde C}}
\text{ where } \tilde L := \frac{L}{2} \text{ and }  \tilde C := \frac{2 C_1 C_3}{2 C_1 + C_3}.
\end{equation}
Since the $\pi$-topology circuit of Example~\ref{piTopologyExample} is
not properly impedance passive, it is not possible to compute the
realisation for the impedance in Eq.~\eqref{eq:fullPiImpedance} by
using Theorem~\ref{RedhefferScatteringThm} without regularisation.
However, for any $R= \sbm{R_0 & 0 \\ 0 & R_0}$ and $\varepsilon > 0$,
the Redheffer star product realisation of the two shift-and-invert
regularised component systems takes the form\footnote{Observe that we
  could use different $R$ for the component systems $\Sigma_p(R_1,
  \varepsilon)$ and $\Sigma_q(R_2, \varepsilon)$. However, when
  producing the feedback loop, the coupled scattering signals must
  have been defined using the same resistance matrix block, i.e.,
  $R_1= \sbm{R_{11} & 0 \\ 0 & R_{12}}$ and $R_2= \sbm{R_{21} & 0 \\ 0
    & R_{22}}$ with $R_{12} = R_{21}$.}  $\Sigma_p(R, \varepsilon)
\star \Sigma_q(R, \varepsilon) = \sbm{A_\varepsilon & B_\varepsilon
  \\ C_\varepsilon & D_\varepsilon}$ where $D_\varepsilon = \sbm{
  1-\frac{2}{\varepsilon+1} & 0\\ 0 & 1-\frac{2}{\varepsilon+1}}$,
$C_\varepsilon = B_\varepsilon^T = \sbm{\frac{ 1}{\varepsilon+R_0}
  \sqrt{\frac{2 R_0}{C_1}} & 0 & 0 & 0 & 0 & 0\\ 0 & 0 & 0 & 0 & 0 &
  \frac{1}{\varepsilon+R_0}\sqrt{\frac{2 R_0}{C_1 }}}$, and
\begin{equation}  \label{eq:scatteringTwoPiRedheffer}
    A_\varepsilon = \bbm{
      -\frac{1}{ R_0 C_1    {\left(\varepsilon+1\right)}} & \frac{1}{\sqrt{L C_1  }}  & 0 & 0 & 0 & 0\\
      -\frac{1}{\sqrt{L C_1  }}  & 0 & \frac{1}{\sqrt{L C_2  }}  & 0 & 0 & 0\\
      0 & -\frac{1}{\sqrt{L C_2  }}  & -\frac{1}{2 C_2   \varepsilon} & \frac{1}{2C_2\varepsilon}  & 0 & 0\\
      0 & 0 & \frac{1}{2C_2\varepsilon}  & -\frac{1}{2 C_2   \varepsilon} & \frac{1}{\sqrt{L C_2  }}  & 0\\
      0 & 0 & 0 & -\frac{1}{\sqrt{L C_2  }}  & 0 & \frac{1}{\sqrt{L C_1  }} \\
      0 & 0 & 0 & 0 & -\frac{1}{\sqrt{L C_1  }}  & -\frac{1}{ R_0 C_1    {\left(\varepsilon+1\right)}}
    }.
\end{equation}
There is a rank one symmetric matrix $A' := \lim_{\eta \to 0}{\eta
  A_\eta} \leq 0$ with eigenvalues $\sigma(A') := \{0, -1/C_2\}$,
satisfying
\begin{equation*}
  A_\varepsilon = A(\varepsilon) + \frac{A'}{\varepsilon} \quad \text{
    where } \quad A(0) = \lim_{\varepsilon \to 0}{A(\varepsilon)} \quad \text{
    exists. }
\end{equation*}

The realisation $\Sigma_\varepsilon := \Sigma_p(R, \varepsilon) \star
\Sigma_q(R, \varepsilon)$ has a six dimensional state space, yet the
circuit in Fig.~\ref{fig:RedhefferTwoPi} (bottom panel) has only five
components. The capacitor $C_3 = 2 C_2$ in the circuit of
Fig.~\ref{fig:RedhefferTwoPi} consists of the two parallel capacitors
of capacitance $C_2$, each of which corresponds to a separate
degree-of-freedom in the state space. Alternatively, one may think
that there is an extra pole in the transfer function of
$\Sigma_\varepsilon$ due to the non-vanishing regularisation parameter
$\varepsilon$.  In the physically realistic coupling without
regularisation (i.e., at the limit $\varepsilon \to 0+$), transfer of
charge may take place between each capacitor of value $C_2$, resulting
in infinite currents in internal conductors of zero resistance inside
$C_3$. We are dealing with a non-well-posed, yet completely virtual
feedback connection in Fig.~\ref{fig:RedhefferTwoPi} (bottom panel).

As a first step towards a more economic model, it is possible to give
a simplified version of $\Sigma_\varepsilon$ for $\varepsilon \approx
0$. Observing that $D_0 = -I$, $C_0 = B_0^T = \sbm{ \sqrt{\frac{2}{
      R_0 C_1}} & 0 & 0 & 0 & 0\\ 0 & 0 & 0 & 0 &\sqrt{\frac{2 }{R_0
      C_1 }}}$, and defining $A'_\varepsilon := A(0) +
\frac{A'}{\varepsilon}$, we first get the realisation
$\Sigma'_\varepsilon := \sbm{A'_\varepsilon & B_0 \\ C_0 & D_0}$ that
is scattering passive for all $\varepsilon > 0$. Indeed, all other
conditions of scattering conservativity in
Proposition~\ref{ScatteringConservativeProp} are satisfied, except for
the Liapunov equation that only holds as the inequality
\begin{equation*}
  A'_\varepsilon + \left ( A'_\varepsilon \right )^* + C_0^* C_0 = 
  A(0) + A(0)^* + \frac{2A'}{\varepsilon}  + C_0^* C_0 = \frac{2A'}{\varepsilon} \leq 0. 
\end{equation*}
The matrix $A'_\varepsilon$ has two real negative eigenvalues
$\lambda_1(\varepsilon)$ and $\lambda_2(\varepsilon)$ satisfying
$\lim_{\varepsilon \to 0+}{\lambda_1(\varepsilon)} = -\infty$ and
$\lim_{\varepsilon \to 0+}{\lambda_2(\varepsilon)} = 0$. Also,
$\lambda_1(\varepsilon) \approx 1/\varepsilon C_2$ as $\varepsilon
\approx 0$ which is a spurious, transient, and singular (i.e.,
proportional to $1/\varepsilon$) mode, associated with the
regularisation of the non-well-posed feedback loop.

We proceed to entirely removing the spurious eigenvalue
$\lambda_1(\varepsilon)$ by a structure preserving dimension
reduction.  Define
\begin{equation*}
     \tilde A 
     := \bbm{ -\frac{1}{ R_0 C_1 } & \frac{1}{\sqrt{L C_1  }}  & 0 & 0 & 0\\
             -\frac{1}{\sqrt{L C_1  }}  & 0 & \frac{1}{\sqrt{L C_3  }}  & 0 & 0\\
       0 & -\frac{1}{\sqrt{L C_3  }}  & 0  & \frac{1}{\sqrt{L C_3  }} & 0\\
       0 & 0 &  -\frac{1}{\sqrt{L C_3  }}  & 0 & \frac{1}{\sqrt{L C_1  }} \\
       0 & 0 & 0 & -\frac{1}{\sqrt{L C_1  }}  & -\frac{1}{ R_0 C_1 
     }} \text{ where }  C_3 = 2 C_2,
\end{equation*}
and $\tilde D := -I$, $\tilde C := \tilde B^T := \sbm{ \sqrt{\frac{2}{
      R_0 C_1}} & 0 & 0 & 0 & 0 \\ 0 & 0 & 0 & 0 &\sqrt{\frac{2 }{R_0
      C_1 }}}$. Then $\tilde \Sigma := \sbm{\tilde A & \tilde B
  \\ \tilde C & \tilde D }$ is a minimal scattering realisation for
the Butterworth filter described by Fig.~\ref{fig:RedhefferTwoPi}, and
it can be obtained by first applying the unitary similarity
transformation on $A_\varepsilon'$ that diagonalises the $2 \times 2$
singular block there. Finally, the row and column corresponding to the
singular mode $\lambda_1(\varepsilon)$ are plainly removed from the
original realisation $\Sigma_\varepsilon'$ to obtain $\tilde \Sigma$
without $1/\varepsilon$ dependency.

\begin{figure}
    \centering
    \includegraphics[width=0.40\textwidth]{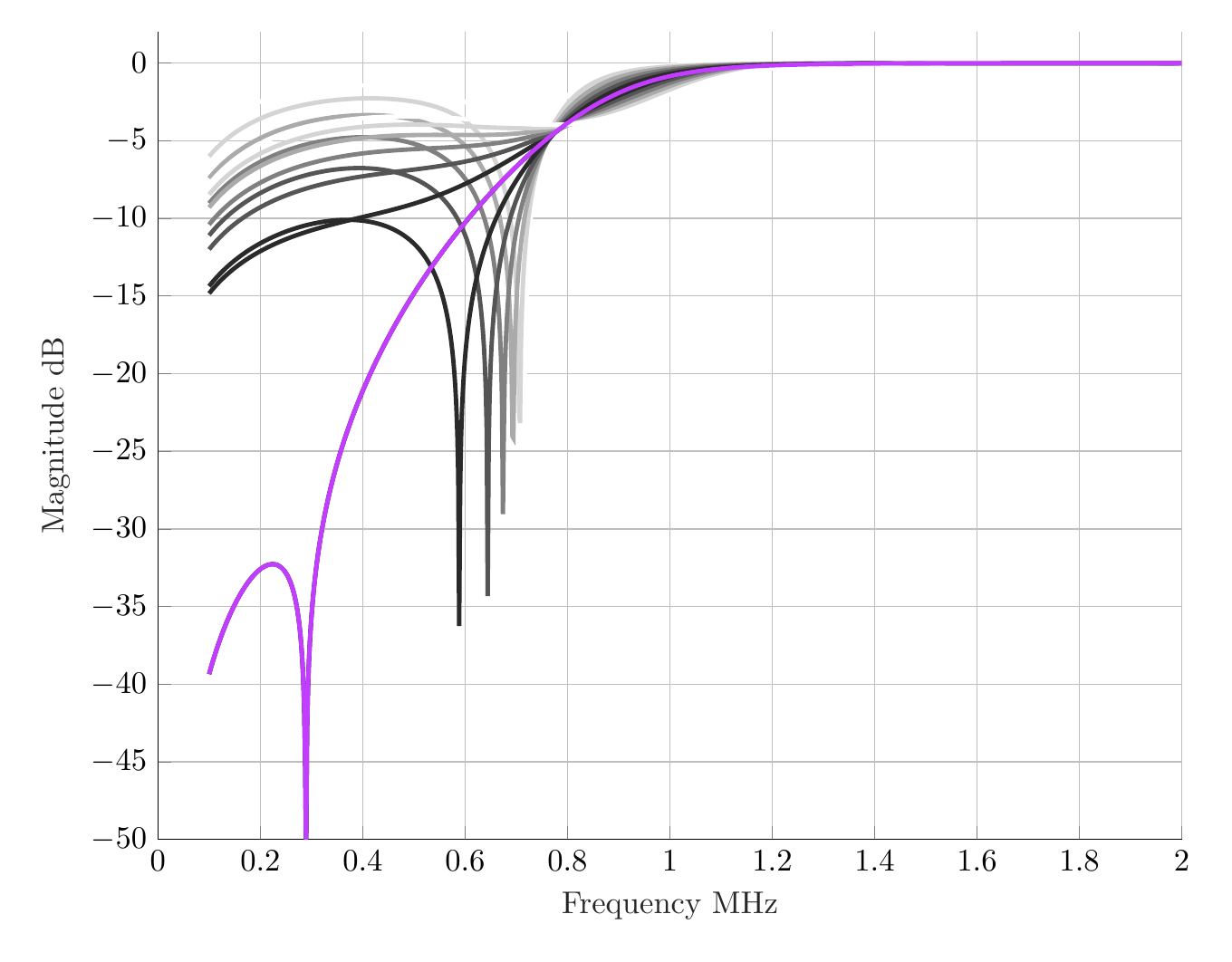}
    \includegraphics[width=0.40\textwidth]{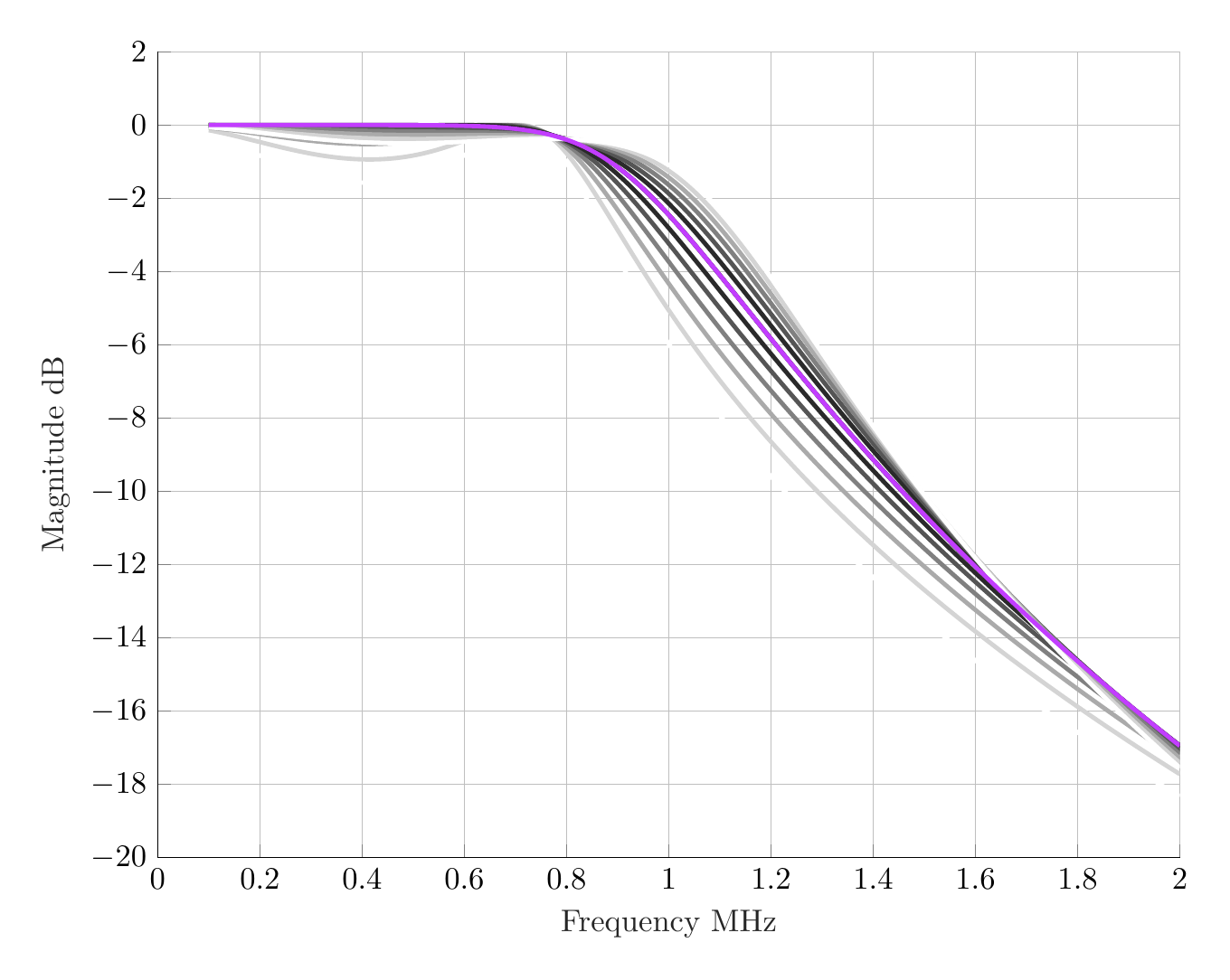}
    \caption[The $s$ parameters of $\Pi$-topology]{ Reflection and
      transmission parameters $s_{11}$ (left panel) and $s_{21}$
      (right panel) of the 5th order Butterworth low pass filter
      having the topology of Fig.~\ref{fig:RedhefferTwoPi}.  For the
      cut-off frequency $1 \, \mathrm{MHz}$ the approximate component
      values $C_1 = 2.2 \,\mathrm{nF}$, $C_3 = 6.8 \, \mathrm{nF}$,
      and $L = 14 \, \mu \mathrm{H}$ correspond to matching the $R_L =
      50 \, \Omega$ load at both ends. The different versions of
      $s_{11}$, $s_{21}$ have been computed from the Redheffer
      realisations $\Sigma_p(R, \varepsilon) \star \Sigma_q(R,
      \varepsilon)$ following Eq.~\eqref{eq:regularisedPiScattering},
      using the regularisation parameter value $\varepsilon = 1 \,
      \mathrm{n} \Omega$ and $R = R_0 \sbm{1 & 0 \\ 0 & 1}$ with
      $R_0/\Omega \in \{ 20, 25, \ldots , 50, \ldots 75, 80 \}$. The
      curve corresponding $R_0 = R_L$ show least ripple in the
      passband of $s_{21}$ as expected.}
\end{figure}

\subsection{\label{AcTransLineSec} Acoustic waveguide terminated to an irrational impedance}

We complete the article by modelling an acoustic waveguide that is
coupled from one end to an semi-infinite exterior space, modelled by a
piston model. Before forming the required Redheffer product in terms
of finite-dimensional systems, the acoustic part is spatially
discretised by FEM, and the load is rationally approximated by the
\Loewner 's interpolation method.  As an application of this, we compute
acoustic signals and frequency responses using anatomic data, acquired
by Magnetic Resonance Imaging (MRI) from a test subject during
production of vowel $[i]$ as explained in
\cite{A-A-H-J-K-K-L-M-M-P-S-V:LSDASMRIS}. 

\subsubsection{\label{FEMSubSec} Formulating the FEM system}

Given continuous and strictly positive functions $k_1, k_2: [0, L] \to
\R$, consider the partial differential equation for $\phi = \phi(t,
\chi)$ satisfying
\begin{equation}
    \label{eq:webster}
    \ddot{\phi} =  \frac{1}{k_1(\chi)}\frac{\partial }{\partial \chi}\left( k_2(\chi) \frac{\partial \phi}{\partial \chi} \right)
\quad \text{ for all } \quad \chi \in (0,L)  \quad \text{ and } \quad t \geq 0
\end{equation}
where $\ddot{\phi} := \tfrac{\partial^2 \phi}{\partial t^2}$ with the
boundary conditions
\begin{equation} \label{eq:webster_boundary}
\begin{aligned}
    - k_2(0) \frac{\partial \phi}{\partial \chi}(t,0) &= i_1(t) \quad \text{ and } \quad 
     k_2(L)\frac{\partial \phi}{\partial \chi}(t,L) &= i_2(t)  \quad \text{ for every } \quad t \geq 0
\end{aligned}
\end{equation}
for continuously differentiable input signals $i_1$ and $i_2$.  The
variational formulation of the problem is as follows: Find $\phi =
\phi(t,\chi)$, satisfying 
\begin{equation*}
  \phi \in C^2([0,\infty); L^2(0,L)) \cap C^1([0,\infty); H^1(0,L)) \cap C([0,\infty); L^2(0,L)) 
\end{equation*}
such that
\begin{equation} \label{eq:websterVariation}
\begin{aligned}
  & \frac{\partial^2}{\partial t^2} \int_0^L{k_1(\chi) \phi(t, \chi) v(\chi) \, \mathrm{d} \chi} + 
  \int_0^L{k_2(\chi) \frac{\partial \phi}{\partial \chi} (t, \chi) \frac{\partial v}{\partial \chi}(\chi) \, \mathrm{d} \chi} \\
  & = k_2(L) \frac{\partial \phi}{\partial \chi} (t, L) v(L) - k_2(0) \frac{\partial \phi}{\partial \chi} (t, 0) v(0) \\
  & =  v(L) i_2(t) +  v(0) i_1(t) 
\end{aligned}
\end{equation}
for all test functions $v \in H^1(0,L)$.

\subsubsection*{Cubic Hermite Finite Element spaces}

Treating the boundary condition such as
Eqs.~\eqref{eq:webster_boundary}--\eqref{eq:websterVariation}, it is
necessary to keep track of spatial derivatives. Instead of the usual
piecewise linear approximations, a convenient way of doing this is
using a Hermitian Finite Element (FE) space where these derivatives
appear as degrees-of-freedom; see, e.g., \cite{Kwon:FEM:1996},
\cite[Section~2.2.3]{Lapidus:NSPD:1999}.

Let $\mathcal{C}_h$ be a finite family of open, disjoint intervals
$(\chi_{i - 1}, \chi_i)$ with whose union is dense in $(0,L)$. The
notation and enumeration of the nodes $\chi_i$ of $\mathcal{C}_h$ is
given in Fig.~\ref{fig:notation}.
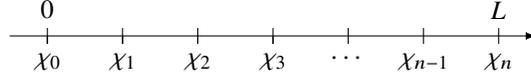
\begin{figure}
    \centering
    \begin{tikzpicture}
        \draw[-latex] (-0.5,0) -- (6.5,0);
        \foreach \x in {0,1,2,3,4,5,6}
        \draw[shift={(\x,0)},color=black] (0pt,3pt) -- (0pt, -3pt);
        \foreach \x in {0,1,2,3}
        \draw[shift={(\x,0)},color=black] (0pt,0pt) -- (0pt,-3pt) node[below] {$\chi_{\x}$};
        \draw[shift={(0,0)},color=black] (0pt,3pt) node[above] {$0$};
        \draw[shift={(6,0)},color=black] (0pt,3pt) node[above] {$L$};
        \draw[shift={(6,0)},color=black] (0pt,-3pt) node[below] {$\chi_n$};
        \draw[shift={(5,0)},color=black] (0pt,-3pt) node[below] {$\chi_{n-1}$};
        \draw[shift={(4,0)},color=black] (0pt,-4pt) node[below] {$\dots$};
    \end{tikzpicture}
    \caption{The notation related to the subdivision $\mathcal{C}_h$
      in the special case of an equidistant division of $[0,L]$.}
    \label{fig:notation}
\end{figure}
We make use of the finite-dimensional subspace
\begin{equation*}
  V_h := \{ w \in H^2(0,L) : w|_{K} \in P_3(K) \, \text{ for all } \, K \in \mathcal{C}_h \}
\end{equation*}
where $P_3(K)$ denotes the space of polynomials of degree $\leq 3$
restricted to $K \subset \R$.  The Galerkin method applied
Eq.~\eqref{eq:websterVariation} gives the following formulation: Find
$\phi_h \in C^2([0,\infty); V_{h})$ such that
\begin{equation}
  \label{eq:1}
   \frac{\partial^2}{\partial t^2} \int_0^L{k_1  \phi_h v_h \, \mathrm{d} \chi} + 
  \int_0^L{k_2  \frac{\partial \phi_h}{\partial \chi} \frac{\partial v_h}{\partial \chi} \, \mathrm{d} \chi} \\
   = v_h(L) i_2  + v_h(0) i_1 
\end{equation}
for all $v_h \in V_{h}$. We proceed to construct an explicit basis for $V_{h}$
so as to express Eq.~\eqref{eq:1} in matrix form.

The local basis functions (see Fig.~\ref{fig:lgbasis})
corresponding to each interval $(\chi_{i-1},\chi_i) \in \mathcal{C}_h$
for $i = 1, \ldots , n$, and they are given by
\begin{alignat*}{2}
    \varphi_i^1(\chi) &= 2 \ell_i(\chi)^3 - 3 \ell_i(\chi)^2 + 1, \quad
    &&\varphi_i^2(\chi) = -2\ell_i(\chi)^3 + 3 \ell_i(\chi)^2,\\
    \varphi_i^3(\chi) &= (\ell_i(\chi)^3 - 2 \ell_i(\chi)^2 + \ell_i(\chi))(\chi_i-\chi_{i-1}), \qquad
    &&\varphi_i^4(\chi) = (\ell_i(\chi)^3 - \ell_i(\chi)^2)(\chi_i-\chi_{i-1})
\end{alignat*}
where $\ell_i(\chi) := \frac{\chi - \chi_{i-1}}{\chi_i - \chi_{i-1}}$.  There are two
families of global basis functions corresponding to each \emph{interior node} $\chi_j$ for
$0 < j < n$, and they are given by
\begin{equation*}
    \psi_j^1(\chi) = \begin{cases}
        \varphi_j^2(\chi), & \text{if $\chi \in (\chi_{j-1},\chi_j]$,} \\
        \varphi_j^1(\chi), & \text{if $\chi \in (\chi_{j},\chi_{j+1})$,} \\
        0, & \text{otherwise,}
    \end{cases} \quad
    \psi_j^2(\chi) = \begin{cases}
        \varphi_j^4(\chi), & \text{if $\chi \in (\chi_{j-1},\chi_j]$,} \\
        \varphi_j^3(x), & \text{if $\chi \in (\chi_{j},\chi_{j+1})$,} \\
        0, & \text{otherwise.}
    \end{cases}
\end{equation*}
We enumerate these two families as 
\begin{align} \label{eq:basis}
    \psi_k(\chi) &= \begin{cases}
        \psi_k^1(\chi), & \text{if}\,\, k \in \{1,\dots,n-1\}, \\
        \psi_{k-n}^2(\chi), & \text{if}\,\, k \in \{n+1,\dots,2n-1\}
    \end{cases}
\end{align}
which is a basis for $V_{h} \cap H^1_0(0,L)$.  However, solutions
$\phi$ of Eq.~\eqref{eq:webster} are expected to have non-trivial
Dirichlet traces $\phi(t,0)$ and $\phi(t,L)$. Hence, we need to add
the functions
\begin{equation} \label{eq:basisTwoMore}
  \psi_{0} = \varphi_1^1  \quad \text{ and  } \quad  \psi_{n} = \varphi_n^2 
\end{equation} 
corresponding the \emph{end point nodes} $\chi_0 = 0$ and $\chi_n =
L$, to obtain the basis $\{\psi_k\}_{k = 0, 1, \ldots , 2n-1}$ for $V_{h}$;
see Fig.~\ref{fig:lgbasis}.

\subsubsection*{System of linear equations}

Since $\phi_h(t, \cdot) \in V_{h}$ for all $t \geq 0$, we get using
the basis of Eqs.~\eqref{eq:basis}--\eqref{eq:basisTwoMore}
\begin{equation*}
    \phi_h(\chi,t) = \sum_{k=0}^{2n-1}{w_k(t) \psi_k(\chi)}
\quad \text{ for } \quad x \in [0, L] \text{ and }  t \geq 0
\end{equation*}
where $w_k$ are some twice differentiable functions. Writing the
coefficient vector $w := \bbm{w_0 & \ldots & w_{2n-1}}^T \in \mathbb{R}^{2n}$
and the input matrix 
\begin{equation*}
  F :=  \bbm{1  & 0 & \ldots &  0 & 0 & 0 & \ldots &  0  \\
  0 & 0 & \ldots &  0 & 1  & 0 & \ldots &  0 }^T \in \mathbb{R}^{2n \times 2},
\end{equation*}
since $\varphi_1^1(0) = \varphi_n^2(L) = 1$; here nonzero entries are
in the first and $(n+1)$st positions.  Eq.~\eqref{eq:1} takes the form
\begin{equation} \label{eq:SecondOrderFEM}
    M \ddot{w}(t) + K w(t) = F \bbm{i_1(t) \\ i_2(t)},
\end{equation}
where $M = \bbm{m_{i j}}$, $K = \bbm{k_{i j}} \in \mathbb{R}^{2n\times
  2n}$ are given by
\begin{equation*}
  \begin{aligned}
    m_{ij} &= \int_0^L k_1 \psi_j \psi_i \,\mathrm{d}\chi \quad \text{ and } \quad
    k_{ij} &= \int_0^L{k_2  \frac{\partial \psi_j}{\partial \chi}  \frac{\partial \psi_i}{\partial \chi} \, \mathrm{d} \chi}.
  \end{aligned}
\end{equation*}
The matrix $M$ is always invertible since the functions $\psi_j$ are a
basis, but the matrix $K$ is never invertible since constant functions
are in $V_h$.  Defining the extended state trajectory $x = \sbm{w
  \\ \dot{w}}$, we get following
Corollary~\ref{thm:SpringConnectionCor} the equivalent form 
\begin{equation} \label{eq:FirstOrderFEM}
\begin{aligned}
  \dot{x}(t) & = A_i^{(1)}  x(t) +  B_i^{(1)} \bbm{i_1(t) & i_2(t)}^T \quad \text{ where } \\
  A_i^{(1)} & := \bbm{0 & K^{1/2} M^{-1/2} \\ - M^{-1/2} K^{1/2} & 0} \text{ and } B_i^{(1)} :=
  \bbm{0 \\ M^{-1/2} F}.
  \end{aligned}
\end{equation}

When modelling acoustics of a variable diameter tube, we have
$k_1(\chi) = \mathcal A(\chi)/c^2$ and $k_2(\chi) = \mathcal A(\chi)$
where $c$ is the speed of sound and $\mathcal A(\chi)$ is the
cross-sectional area at $\chi \in [0, L]$. The physical dimension of
the input signals $i_1$ and $i_2$ is volume velocity (given in
$\mathrm{m}^3/\mathrm{s}$). To read out the sound pressures (given in
$\mathrm{Pa}$) $p_1$ and $p_2$ from the system at the ends $\chi = 0$
and $\chi = L$ , we define
\begin{equation} \label{eq:FirstOrderFEMOut}
  \bbm{p_1(t) & p_2(t)}^T = C_i^{(1)} \xx(t) \quad\text{ with } C_i^{(1)} := \rho
  \bbm{0 & F^T M^{-1/2}}
\end{equation}
where the dimension of the zero matrix is $2 \times 2n$, and $\rho >
0$ denotes the density of the medium. The system defined by
Eqs.~\eqref{eq:FirstOrderFEM}--\eqref{eq:FirstOrderFEMOut} is
henceforth denoted by $\Sigma_i^{(1)} = \sbm{A_i^{(1)} & B_i^{(1)}
  \\ C_i^{(1)} & 0}$. Note that $\Sigma_i$ is not yet an impedance
conservative system, but impedance conservativity is achieved by
dividing $\rho$ in Eq.~\eqref{eq:FirstOrderFEMOut} evenly between
$B_i^{(1)}$ and $C_i^{(1)}$ as $\rho^{1/2}$, see
Theorem~\ref{thm:SpringConnection}.

\begin{figure}
    \centering
    \includegraphics[width=0.45\textwidth]{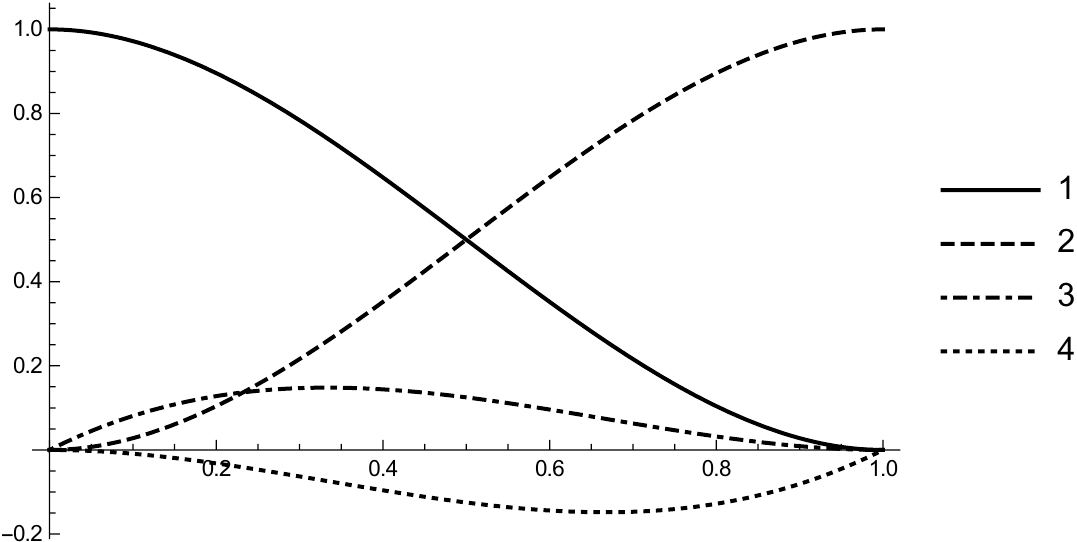}
    \includegraphics[width=0.45\textwidth]{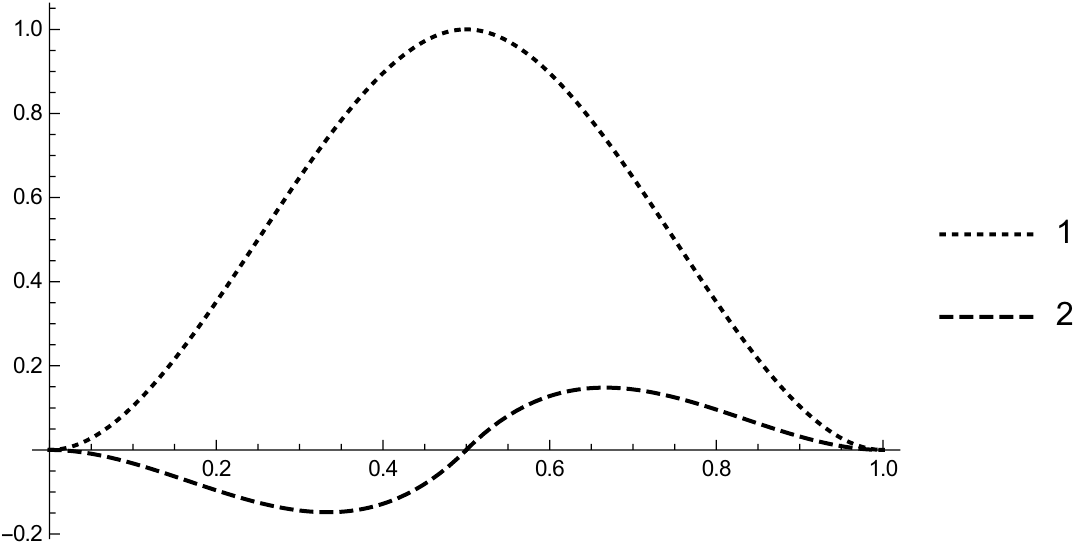}
    \caption{The local basis functions $\varphi_i^k$,
      $k\in\{1,2,3,4\}$, on the interval $(\chi_{i-1},\chi_{i})=(0,1)$ (left panel).
      The global basis functions $\psi_j^k$, $k\in\{1,2\}$,
      corresponding to the node $\chi_j=0.5$ (right panel). In the picture,
      $\chi_{j-1}=0$ and $x_{j+1}=1$.
 }
    \label{fig:lgbasis}
\end{figure}
 
\subsubsection{\label{LownerSubSec} L{\"o}wner interpolation of the exterior space impedance}

We proceed to construct a low-order model for the acoustics of an
unbounded half space in $\R^3$ as seen from a circular aperture of
radius $a > 0$. Following the work of Morse and
\Ingard~\cite{Morse:1968}, the acoustic impedance $Z(s)$ from the
piston model is given above in Eq.~\eqref{eq:acImpIntro}.  This
function satisfies $Z(\overline{s}) = \overline{Z(s)}$ and, hence,
$Z(\R) \subset \R$ for physical reasons.  We proceed in two steps: the
function $Z(s)$ is first approximated by a rational interpolant using
\Loewner's method, and then its McMillan degree is further reduced by
the Singular Value Decomposition (SVD).

For a given integer $m > 0$, we use two sets of interpolation points
$\{ \mu_i \}, \{ \lambda_j \} \subset \C_-$, each having $m$ elements
with $\{ \mu_i \} \cap \{ \lambda_j \} = \emptyset$. The interpolation
data from $Z(s)$ is represented in terms of two $m \times m$ matrices
$\Ll = \bbm{l_{ij}}$ and $\M = \bbm{m_{ij}}$ given by
\begin{equation*}
  l_{ij} = \frac{Z(\mu_i)-Z(\lambda_j)}{\mu_i - \lambda_j} \quad \text{ and } \quad 
m_{ij} = \frac{\mu_i Z(\mu_i)- \lambda_j Z(\lambda_j)}{\mu_i - \lambda_j}
\end{equation*}
for $1 \leq i, j \leq m$ where $2m$ is the total number of
interpolation points. Following \cite[Chapter 1]{Antoulas:IMR:2010},
we call $\Ll$ and $\M$ \emph{\Loewner{}} and \emph{shifted \Loewner{}
  matrices}, respectively.  Defining the vectors
\begin{equation*}
  b :=  \bbm{Z(\mu_1) & Z(\mu_2) & \ldots & Z(\mu_m)}^T
  \quad \text{ and } \quad c := \bbm{Z(\lambda_1) & Z(\lambda_2) & \ldots & Z(\lambda_m)}^T
\end{equation*}
we get the linear dynamical system, say $\tilde{\Sigma}_p$, given in descriptor form as
\begin{equation}\label{eq:LowUnreg}
\begin{cases}
  \Ll v'(t) & =  \M v(t) -  b i(t)  \\
  p(t)&  = c^T v(t)
\end{cases}
\end{equation}
whose transfer function is a rational interpolant of values of $Z(s)$
at points $\{ \mu_i \} \cup \{ \lambda_j \}$ if the matrix pencil
$\left (\Ll , \M \right)$ is regular; see
\cite[Theorem~1.9]{Antoulas:IMR:2010}, \cite[Section~4.5.2]{Antoulas:ALD:2005}.

To avoid complex arithmetics as in \cite[Appendix~A.2]{Ionita:DPM:2014}
while respecting the property $Z(\overline{s}) = \overline{Z(s)}$, we
use interpolation points satisfying in addition to $\{ \mu_i \} \cap
\{ \lambda_j \} = \emptyset$ and $\{ \mu_i \} \cup \{ \lambda_j \}
\subset \C \setminus{\R}$ the following conditions:
\begin{equation} \label{eq:InterpRestrictions}
  \begin{aligned}
    & \Ll \text{ is invertible}; \quad 
    \mu_i \neq \mu_j \text{ and } \lambda_i \neq \lambda_j \text{ for } i \neq j;  \text{ and }\\ 
    & \mu_{j+1} = \overline{\mu_j} \text{ and } \lambda_{j+1} = \overline{\lambda_j} \text{ for odd }  j = 1, 3, \ldots m-1
  \end{aligned}
\end{equation}
where $m$ even. An unitary change of coordinates given in
\cite[Appendix~A.2]{Ionita:DPM:2014} makes it possible to replace
matrices $\Ll , \M$ and vectors $b, c$ in Eq.~\eqref{eq:LowUnreg} by
their purely real counterparts that are henceforth denoted by the same
symbols.

After the construction of \emph{real} matrices $\Ll$ and $\M$ for
Eq.~\eqref{eq:LowUnreg}, the adverse effects of oversampling are
removed by reducing their order via SVD
following~\cite[Section~4.2]{Ionita:DPM:2014}.  More precisely, we
write $\Ll = USV^T $, and pick left and right singular vectors
corresponding to the $k$ largest singular values into $m \times k$
isometric matrices $U_k$ and $V_k$ with $k \ll m$.  The reduced order
$k \times k$ matrices $\Ll_k = U_k^T\Ll V_k $, $\M_k = U_k^T\M V_k$,
and the vectors $B_i^{(2)} := U_k^Tb$, $C_i^{(2)} := cV_k^T$ define
the system $\Sigma_i^{(2)} = \sbm{A_i^{(2)} & B_i^{(2)} \\ \tilde
  C_i^{(2)} & 0}$ through the dimension reduced equations derived from
Eq.~\eqref{eq:LowUnreg}
\begin{equation}
  \label{eq:LowRegSystem}
  \begin{cases}
    z'(t) = A_i^{(2)} z(t) + B_i^{(2)} i(t) \\ p(t) = C_i^{(2)} z(t)
  \end{cases}
\end{equation}
where $A_i^{(2)} := \Ll_k^{-1}\M_k$. The external reciprocal transform
of $\Sigma_i^{(2)}$ is a realisation related to the
Dirichlet-to-Neumann map that is used in resonance computations of
coupled Helmholtz systems; see, e.g., \cite{Araujo:ERC:2018}.

\subsubsection{\label{ResultsSubSec} Modelling vowel production}

To define the system $\Sigma_i^{(i)}$, the cross-section areas
$\mathcal A(\chi)$ for $\chi \in [0, L]$ are obtained from vocal tract
(VT) geometry of a test subject while producing the vowel sound [i]
through the process described in
\cite{Aalto:SurfExtraction:2013,Ojalammi:Segmentation:2017}.  The
length of the VT centreline is $L = 19.6 \, \mathrm{cm}$, and $\chi =
0, L$ denote the vocal folds and mouth opening positions,
respectively. For spatial discretisation, the equidistant subdivision
of $[0, L]$ into $n = 99$ subintervals is used, and $A(\chi)$ is
considered piecewise linear on this subdivision. The waveguide system
$\Sigma_i^{(1)}$ is produced as described above, and the dimension of
its state space is $4n = 396$.

The mouth opening area is used as the parameter $\mathcal A_0 =
\mathcal A(L)$ for the exterior space impedance $Z(s)$ in
Eq.~\eqref{eq:acImpIntro}. A finite-dimensional system is produced by
sampling $Z(s)$ at $2 m = 300$ interpolation points $\{ \mu_i \} \cup
\{ \lambda_j \}$ from inside the square $W = \{ s \in \C \, : \, 0
\leq \abs{\Re{s}}, \abs{\Im{s}} \leq 3\cdot 10^5 \,
\mathrm{rad}/\mathrm{s} \}$ while obeying the restrictions of
Eq.~\eqref{eq:InterpRestrictions}.  Some of the interpolation points
are near the zeroes of $Z(s)$, and the rest are uniformly distributed
random points. The frequencies under $48 \, \mathrm{kHz}$ get
accurately modelled in this way.  The low-order exterior space model
$\Sigma_i^{(2)}$ of McMillan degree $k = 16$ is obtained by dimension
reduction, and the estimated relative error of its transfer function,
compared to $|Z(s)|$, is under $3\cdot10^{-6} \,
\mathrm{kg}/\mathrm{m^4 \, s}$ in the frequency interval of interest.
The values $c = 343 \, \mathrm{m}/\mathrm{s}$ and $\rho = 1.225 \,
\mathrm{kg}/\mathrm{m}^3$ are used for both $\Sigma_i^{(1)}$ and
$\Sigma_i^{(2)}$.

To obtain the composite system $\Sigma_i = \sbm{A_i & B_i \\ C_i & 0}$
(where $A_i$ is a $412 \times 412$ matrix) and its time
discretisation, the following steps are taken:
\begin{enumerate}
\item The impedance passive systems $\Sigma_i^{(1)}$ and $\Sigma_i^{(2)}$ are
  regularised and externally Cayley transformed to obtain the
  scattering systems $\Sigma^{(1)}(R)$ and $\Sigma^{(2)}(R_2,
  \varepsilon)$ with $R= \sbm{R_1 & 0 \\ 0 & R_2}$ and $R_1 =R_2 = 1.1
  \cdot 10^6 \, \mathrm{kg}/\mathrm{m^4 \, s}$. The choice of the
  regularisation parameter $\varepsilon > 0$ is discussed
  below.\footnote{Note that only one of the systems $\Sigma^{(1)}(R)$
    and $\Sigma^{(2)}(R_2, \varepsilon)$ need be regularised for using
    the Redheffer star product. Applying the regularisation to the
    exterior space model $\Sigma_i^{(2)}$ has the additional advantage
    for introducing physically realistic resistive effects at the
    mouth opening. For this reason, we do not let $\varepsilon \to 0$
    as in Section~\ref{ButterworthSec} but choose it optimally based
    on measured formant data.}
\item The Redheffer star product $\Sigma(R_1, \varepsilon) :=
  \Sigma^{(1)}(R) \star \Sigma^{(2)}(R_2, \varepsilon)$ is computed to
  obtain the composite system in scattering form. The system
  $\Sigma(R_1, \varepsilon)$ is a scattering passive model for the VT
  where mouth at $\chi = L$ is coupled to the exterior space, and a
  virtual control surface right above vocal folds at $\chi = 0$ is
  coupled to a measurement port of load impedance $R_1$.
\item The inverse external Cayley transformation (with $R = R_1$) is
  used to obtain the impedance system $\Sigma_i(\varepsilon)$ from
  $\Sigma(R_1, \varepsilon)$.
\item For time domain simulations by Crank-Nicolson method, the
  internal Cayley transform $\phi_\sigma(\varepsilon) = \spm{A_\sigma
    & B_\sigma \\ C_\sigma & D_\sigma}$ of $\Sigma_i(\varepsilon)$ is
  computed using $\sigma = 88 \, 200$, corresponding the time
  discretisation parameter value $h = 22.7 \mu s$ and the sampling
  frequency $44 \, 100 \, \mathrm{Hz}$. 
\end{enumerate}
The transfer function $\CTF_i(s) = C_i \left (s - A_i \right ) ^{-1}
B_i$ of $\Sigma_i(\varepsilon)$ is the computed total acoustic
impedance of VT and the exterior space as seen at the vocal folds
position. The resonant frequencies $f_{R1}, f_{R2}, \ldots$,
corresponding to the lowest vocal tract formants $F_1, F_2, \ldots $,
are obtained from the eigenvalues $\lambda \in \sigma(A_i)$ by $R_j=
\Im{\lambda_j}/2 \pi$. 

The temporally discretised system $\phi_\sigma(\varepsilon)$ is
discrete time impedance passive by
Proposition~\ref{ScatteringPassiveProp}.  To simulate the vowel
production in time domain, it needs to be excited by a flow signal
waveform, using it as the input to the difference equations
Eq.~\eqref{eq:systemDynDisc} at the sampling frequency of $44 \, 100
\, \mathrm{Hz}$. A suitable flow waveform is provided by the
Liljencrantz--Fant (LF) pulse train at $f_o = 120 \, \mathrm{Hz}$
together with the synthesised signals shown in
Fig.~\ref{fig:VTSimulations} (left panel) (see also
\cite[Fig.~4]{M-A-M-A-V:MLBVFOVTA} 
estimated spectra (middle and right panel). Burg's method (with model
order 100) is used for the data in the middle panel, and an envelope
detector is used in the right panel. The spectral tilt over the
frequency interval $[200 \, \mathrm{Hz}, 6.4 \, \mathrm{kHz}]$ is
$\approx - 6.5 \, \mathrm{dB}/\mathrm{octave}$ in
Fig.~\ref{fig:VTSimulations} (middle panel). Considering that losses
to VT walls and the glottal opening at $\chi = 0$ have been neglected,
this value is reasonably consistent with the values $-9.0 \ldots
-9.2\, \mathrm{dB}/\mathrm{octave}$ that were given in
\cite[Table~2~on~p.15]{K-M-O:PPSRDMRI} 

\begin{figure}
    \centering
    \includegraphics[width=0.33\textwidth]{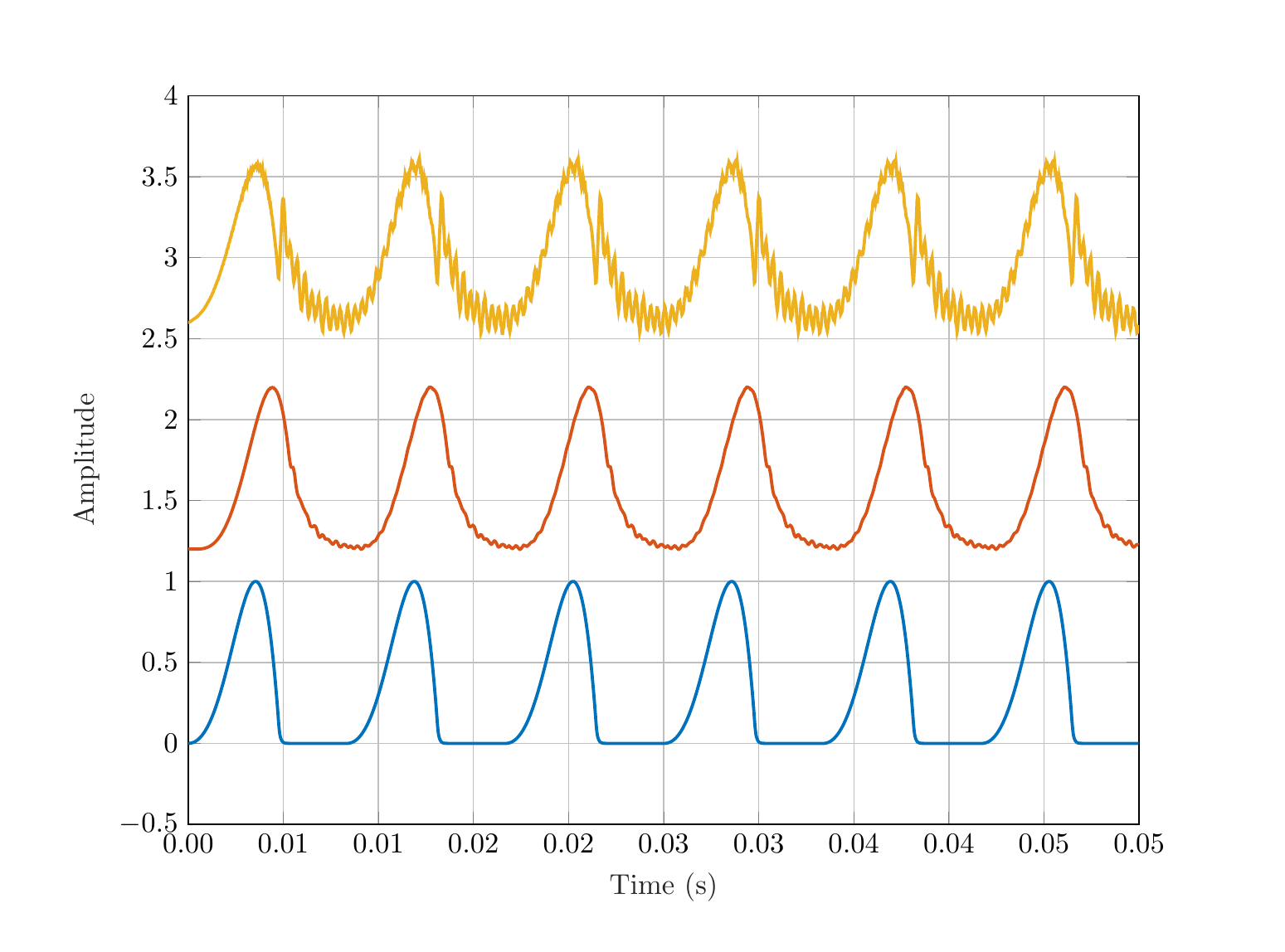}
    \includegraphics[width=0.33\textwidth]{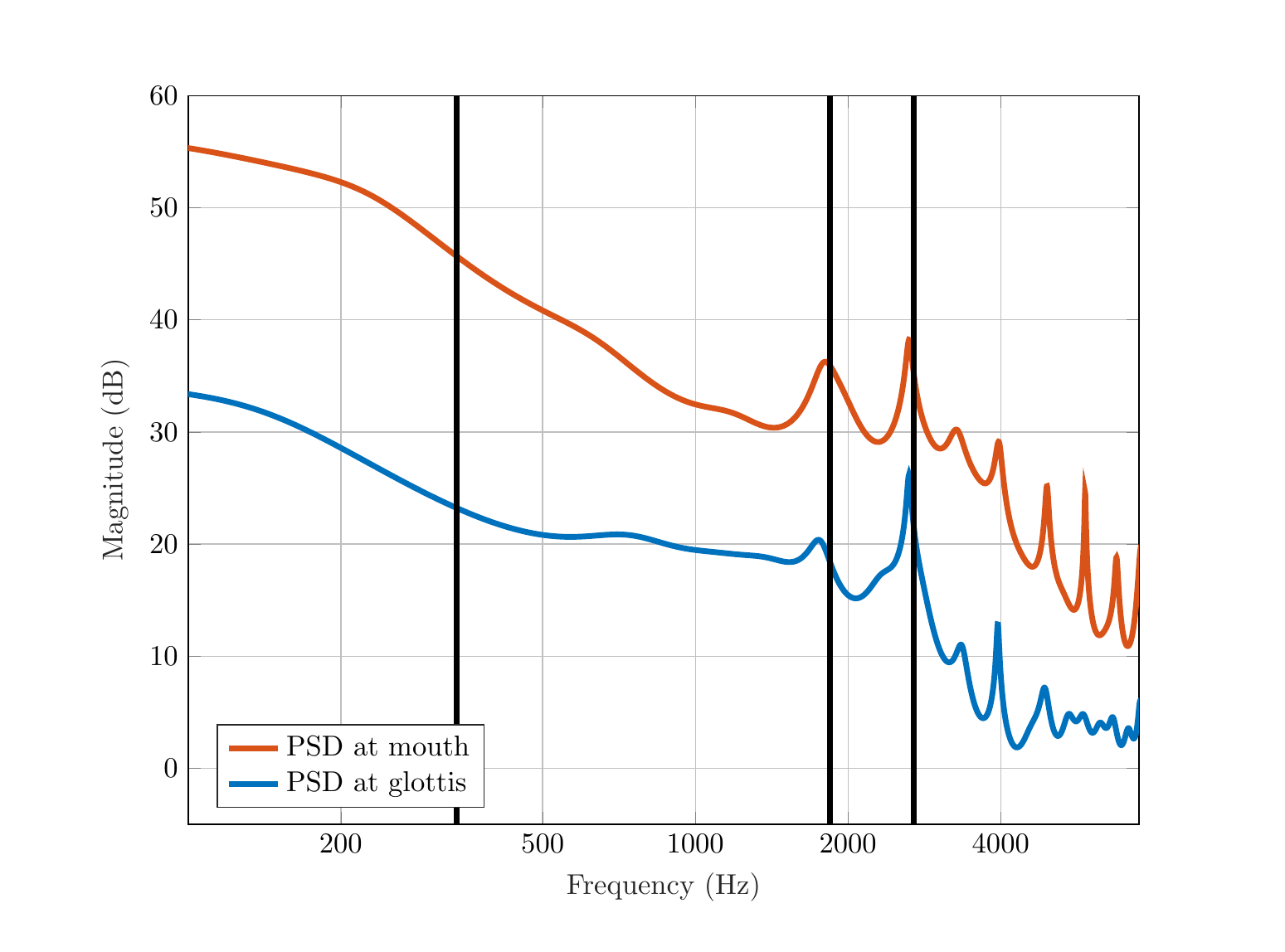}
    \includegraphics[width=0.29\textwidth]{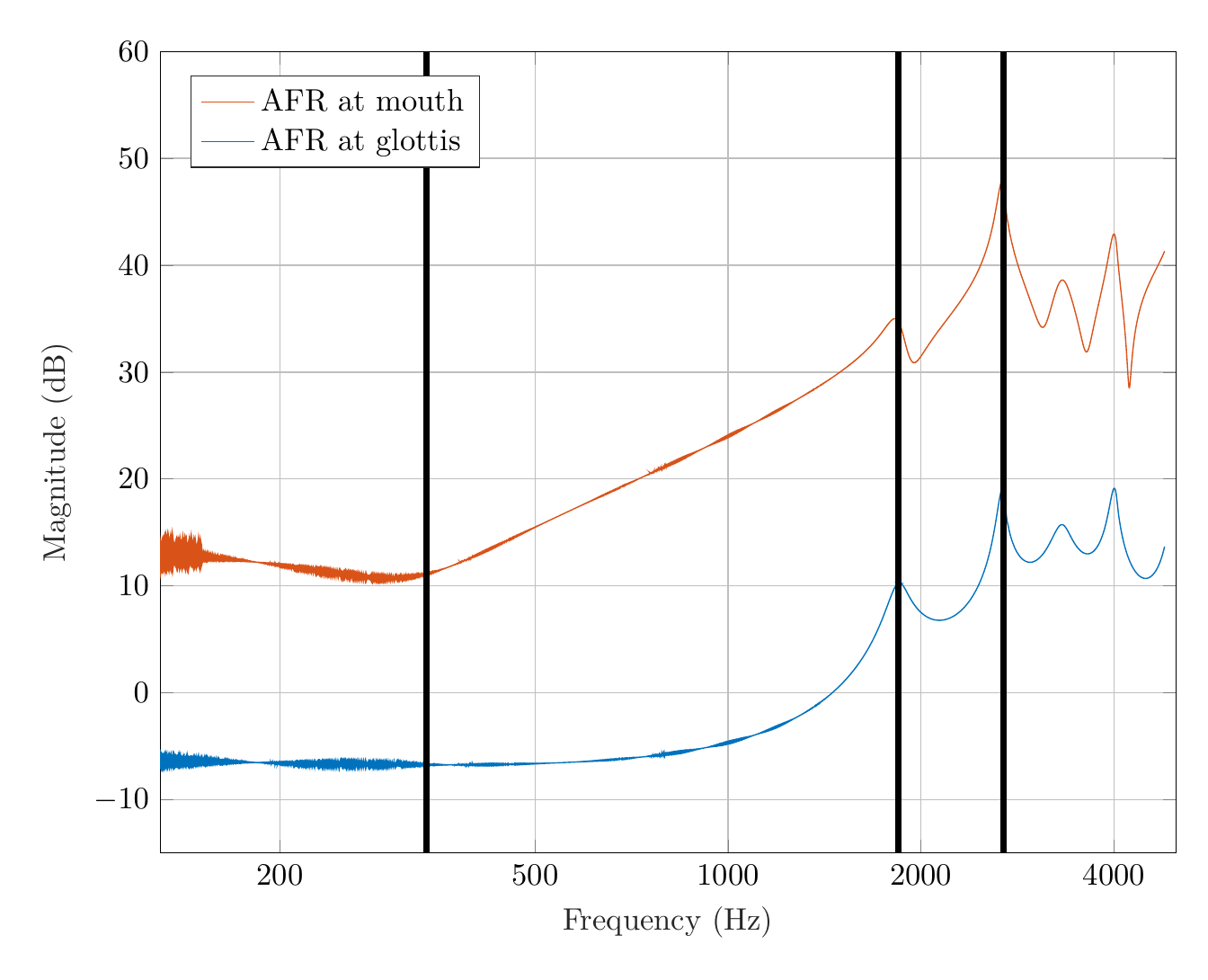}
    \caption{Liljencrantz--Fant (LF) glottal flow excitation waveform
      at $f_o = 120 \, \mathrm{Hz}$ (left panel, bottom) together with
      the acoustic pressure waveforms at mouth (middle) and at vocal
      folds (top) positions as computed from the model.  The envelopes
      of the power spectral densities (using LF excitation) at mouth
      and vocal folds positions (middle panel). Amplitude frequency
      responses (using constant amplitude logarithmic sweep) as the
      input (left panel). All the responses are given without absolute
      scale.}
    \label{fig:VTSimulations}
\end{figure}

We complete this work by discussing the accuracy of the low order
model $\Sigma_i(\varepsilon)$ in terms of measured data.  The lowest
resonant frequencies, computed from the eigenvalues of $A_i$, are
$f_{R1} = 338 \, \mathrm{Hz}$, $f_{R2} = 1845 \, \mathrm{Hz}$, and
$f_{R3} = 2693 \, \mathrm{Hz}$ using $\varepsilon = 0.194 \, Z_0 $
(with $Z_0 = \rho c/\mathcal A_0$) as the regularisation parameter for
$\Sigma_i^{(2)}$ that is tuned to match $f_{R1} \ldots f_{R3}$ to the
measured target data $F_1 \ldots F_3$ optimally.  As the target data,
we use the experimentally obtained formant values $F_1 \ldots F_3$
from vowel samples that were measured from the same test subject in
anechoic chamber. These values are $F_1 = 340 \pm 25 \, \mathrm{Hz} $,
$F_2 = 1840 \pm 40 \, \mathrm{Hz}$, and $F_3 = 2490 \pm 60\,
\mathrm{Hz}$ as can be read off from \cite[Fig.~9]{K-M-O:PPSRDMRI}.
Obviously, the computed resonant frequencies $f_{R2} , f_{R3}$ match
the peaks in spectrograms in Fig.~\ref{fig:VTSimulations}, extracted
from the simulated sound pressure signals at vocal folds and mouth
positions.

It is pointed out in~\cite{A-M-V:F} that not all formants can always
be observed in the power spectrum due to confounding factors; such
formant is called \textit{latent}. Since there is an underlying
resonance $f_{R1}$ corresponding to formant $F_1$ which is, hence,
classified as latent in Fig.~\ref{fig:VTSimulations}. Moreover, the
model produces no extra spurious resonances under $3 \, \mathrm{kHz}$,
i.e., resonant frequencies not accounted for by the measurement data
on vocal tract replicas \cite[Male\_i\_sweep.pdf in
  Repository~III]{A-M-M-K-S-A-S-V-G:OPENGLOT}, nor the speech
measurements during MRI \cite[Fig.~9]{K-M-O:PPSRDMRI}, except for
those that appear on the negative real axis.  Note that $\varepsilon$
can be understood as additional acoustic series resistance to the
exterior space model, and it appears that essentially only $f_{R1}$ is
sensitive to $\varepsilon$.

Discrepancies between $f_{Rj}$ and $F_j$ can be computed from
\begin{equation*}
  D_j = 12 \log_2{\frac{f_{Rj}}{F_j}} \quad \text{ in semitones for } j = 1, 2, 3, 
\end{equation*}
yielding $D_1 = -0.1$, $D_2 = 0.05$, and $D_3 = 1.4$, respectively. A
computational experiment based on 3D Helmholtz equation was reported
in \cite[Section~5.2]{A-A-H-J-K-K-L-M-M-P-S-V:LSDASMRIS} 
where the same 3D MRI data
was used but the exterior space model was trivial: the homogeneous
Dirichlet boundary condition was imposed at the mouth opening.  The
resulting discrepancies from the computational data of
\cite[Fig.~7]{A-A-H-J-K-K-L-M-M-P-S-V:LSDASMRIS} 
are $\tilde D_1 = -7.5$, $\tilde D_2
= 2.0$, and $\tilde D_3 = 2.9$ (accounting for the different speed of
sound $c = 350\, \mathrm{m}/\mathrm{s}$ used in
\cite{A-A-H-J-K-K-L-M-M-P-S-V:LSDASMRIS}. 

We conclude that the proposed 1D acoustics model with only $412$
degrees-of-freedom and an improved treatment of exterior space
acoustics produces much better match for $F_1 \ldots F_3$ from
experimental data, compared to the 3D Helmholtz FEM solver with
$\approx 10^5$ degrees-of-freedom.

\section{Conclusions}

Finite-dimensional realisations have been proposed as a framework for
practical numerical modelling of interconnected passive systems. The
use of the machinery was illuminated by examples from circuit
synthesis and acoustic waveguides.

Under some restrictive assumptions, up to six equivalent
reformulations were given in Section~\ref{TransformSec} for the same
underlying dynamical system.  It is in the nature of things that each
of the reformulations make some aspects quite transparent while
obscuring other aspects. For example, to write electro-mechanical
model equations in continuous time, one is likely to prefer impedance
passive formulations in terms of external current and voltage signals
satisfying Kirchhoff's laws in couplings. Continuous time scattering
passivity deals with power transmission and reflection parameters
satisfying the conservation of energy at the component interfaces, and
it is particularly useful for modelling feedback systems as in
Section~\ref{RedhefferSec}.  In itself, the characterisation of
passivity is easiest for continuous time in impedance setting and for
discrete time in scattering setting; see
Propositions~\ref{ImpPassiveProp}~and~\ref{ScatteringPassivePropDiscrete}.
It is desirable that spatial discretisation preserves the passivity of
the original system, and for some Finite Element discretisations this
follows from Theorem~\ref{thm:SpringConnection}. Finally, temporal
discretisation by Tustin's method, i.e., the internal Cayley
transformation, leads to scattering or impedance passive discrete time
systems as given in Proposition~\ref{ScatteringPassiveProp}.

A few words about generalisations that are peripheral to the point of
this
article. Propositions~\ref{ImpPassiveProp},~\ref{ScatteringConservativeProp},~\ref{ScatteringPassivePropDiscrete},
and \ref{ScatteringPassiveProp} are special cases of known
infinite-dimensional results on \emph{system nodes}.  Internal
transformations in Section~\ref{InternalCayleySec} have natural
generalisations to system nodes in \cite{OS:WPLS}, but the external
transformations in Section~\ref{ExtCayleySubSec} require more care
because they refer to the feedthrough operator $D$ in an essential
way. However, there is a straightforward generalisation to the
\emph{state-linear systems} (see~\cite{C-Z:IID}) where $A$ generates a
contraction semigroup on a Hilbert state space $X$, with $B, C^* \in
\BLO(U;X)$ and $D \in \BLO(U)$ for the signal Hilbert space
$U$. Considering
Propositions~\ref{AnyROKProp}~and~\ref{ExtCayleyPassivityProp}, the
external Cayley transform (with $R = I$) is given in
\cite[Theorem~5.2]{Staffans:PCCT:2002} for well-posed (but not
necessarily regular) impedance passive system nodes.  Furthermore,
Propositions~\ref{ExtCayleyPassivityProp}~and~\ref{ReciprocalPassivityProp}
can be generalised to system nodes by usual techniques whereas only a
weaker form of Theorem~\ref{ProperlyImpPassThm} holds even in the case
of regular systems where $D$ may be non-compact.
Theorems~\ref{RedhefferCascadeThm}~and~\ref{RedhefferScatteringThm}
generalise to state-linear system with finite-dimensional $U$, but
both $\Delta_1$ and $\Delta_2$ need be assumed invertible if $\dim{U}
= \infty$. A higher generalisation of the Redheffer star product is,
perhaps, easiest obtained by translating to discrete time by the
internal Cayley transformation. Finally, Standing
Assumption~\ref{SecondSA} is mostly a matter of convenience, and the
signal dimensions in splittings need not generally be the same with
the notable exception of the chain transformation where the
invertibility of $D_{21}$ is crucial.

Even though state space control is often motivated by the ease of
numerical linear algebra compared to the treatment of rational
transfer functions, the appeal of realisation techniques is diminished
by their inherent numerical burden in large problems. Simulation in
discrete time by the internal Cayley transform of $\Sigma = \sbm{ A &
  B \\ C & D }$ can always be arranged so that the matrices in
Eq.~\eqref{eq:IntCayleyDef} need not be computed but, instead, a
linear problem is solved at each time step. For long time simulations,
one would prefer pre-computing the matrices in
Eq.~\eqref{eq:IntCayleyDef} if storage is not a concern. So as to
external transformations and the Redheffer star product, some
resolvents of the feedthrough matrix $D$ of $\Sigma$, or its parts,
need be computed at the expense of loss of sparseness. This is a
trivial requirement for the examples in Section~\ref{ApplicationsSec}
but increasingly expensive when, e.g., coupling together 3D acoustic
subsystems at a common 2D boundary interface. Thus, some form of
economy should be exercised in the spatial discretisation of the
interfaces connecting the component systems.  Alternatively, a special
kind of dimension reduction could be used at the interface
degrees-of-freedom as was done in \cite{H-M-O:ESSEPFCS} for the
coupled symmetric eigenvalue problem.

\subsection*{Acknowledgements}

JK has received support from Academy of Finland (13312124,
13312340). JM has received support from Magnus Ehrnrooth
foundation. TG has received support from Finnish Cultural foundation. 


\end{document}